\title[The Kuramoto model with inertia and frustration]{Emergent dynamics of the inertial Kuramoto model with frustration on a locally coupled graph}
\author[Zhu]{Tingting Zhu \textsuperscript{\MakeLowercase{a,b}}}
\author[Zhang]{Xiongtao Zhang \textsuperscript{\MakeLowercase{c},*}}
\newtheorem{theorem}{Theorem}[section]
\newtheorem{lemma}{Lemma}[section]
\newtheorem{remark}{Remark}[section]
\newtheorem{definition}{Definition}[section]
\def\charf {\mbox{{\text 1}\kern-.30em {\text l}}}
\begin{document}
%%%%%%%%%%%%%%%%

\date{\today}

\subjclass{34D05, 34D06, 34C15, 92D25} 
\keywords{synchronization, general topology, hypo-coercivity, exponential rate, inertial frustrated Kurmoto model}

\thanks{\textsuperscript{a} School of Artificial Intelligence and Big Data, Hefei University, Hefei, China (ttzhud201880016@163.com)}
\thanks{\textsuperscript{b} Key Laboratory of Applied Mathematics and Artificial Intelligence Mechanism, Hefei University, Hefei, China}
\thanks{\textsuperscript{c} School of Mathematics and Statistics, Wuhan University, Wuhan, China (zhangxt@whu.edu.cn)}
\thanks{\textsuperscript{*} Corresponding author. }

\begin{abstract}
We study the synchronized behavior of the inertial Kuramoto oscillators with frustration effect under a symmetric and connected network. Due to the lack of second-order gradient flow structure and singularity of second-order derivative of diameter, we shift to construct convex combinations of oscillators and related new energy functions that can control the phase and frequency diameters. Under sufficient frameworks on initial data and system parameters, we derive first-order dissipative differential inequalities of constructed energy functions. This eventually gives rise to the emergence of frequency synchronization exponentially fast.
\end{abstract}
\maketitle \centerline{\date}

%\tableofcontents
\section{Introduction}\label{sec:1}
\vspace{0.5cm}

Synchronization phenomena of finite population of oscillatory systems has raised research interest in various scientific communities such as biology, engineering control, physics \cite{A-B-P-R05,B-B66,Do-B12,P-R-K01,S00,W67} and etc. The Kuramoto model \cite{K75} is one of the well-known models that effectively depict the collective synchronized behaviors observed in these systems. Due to the potential applications, theoretical studies for the first order Kuramoto model have been extensively investigated from different aspects, to name a few, synchronization analysis \cite{C-H-J-K12,H-K-P15, H-R20}, network structure \cite{D-X13, Z-Z23}, mean-filed limit \cite{B-C-M15}, physical effects \cite{H-K-L14,S-K86} and so on. To better describe some synchronous patterns, the second-order Kuramoto type model was proposed \cite{E91} with the consideration of inertial effects. This model has been extensively explored for a deeper grasp of superconducting Josephson junctions \cite{W-S97,W-C-S96}  and power grids \cite{D-B12,L-X-Y14}. In this work, we focus our attention on the inertial Kuramoto model under the joint effects of phase shifts (also called frustration ) and network structure.

To set up the stage, consider an interaction network characterized by a weighted graph $G = (V,E,\Psi)$. This graph consists of a vertex set $V = \{1, 2, \cdots, N\}$, an edge set $E \subset V \times V$, and an $N \times N$ matrix $\Psi = (\psi_{ij})$ whose element $\psi_{ij}$ represents the capacity of the communication weight flowing from vertex $j$ to $i$. Note that the edge set can be further denoted as $E = \{(j,i) | \psi_{ij} > 0\}$. Assume that the Kuramoto oscillators are located at the vertices of the graph $G$, and interactions between them are registered by $E$ and $\Psi$. Let $\theta_i = \theta_i(t) \in \mathbb{R}$ be the phase of the $i$-th oscillator, and the phase dynamics of the inertial Kuramoto oscillators under the effect of frustration is governed by the following second-order system:
\begin{equation}\label{s_phs}
m \ddot{\theta}_i(t) + \dot{\theta}_i(t) = \Omega_i + \frac{K}{N} \sum_{j=1}^N \psi_{ij} \sin (\theta_j(t) - \theta_i(t) + \alpha), \quad t > 0, \ i=1,2, \ldots, N,
\end{equation}
subject to the initial data:
\begin{equation}\label{initial_data}
(\theta_i(0), \omega_i(0)) = (\theta_{i0}, \omega_{i0}).
\end{equation}
Here, $\Omega_i$ is the natural frequency and $\omega_i(t) = \dot{\theta}_i(t)$ is the instantaneous frequency of the $i$-th oscillator, $m > 0$ and $0 < \alpha < \frac{\pi}{2}$ respectively denote the strength of inertia and frustration effects, and $K > 0$  reprensents the coupling strength. 
%We assume that the interaction matrix $\Psi = (\psi_{ij})$ is symmetric and connected:
%\begin{enumerate}[(i)]
%\item $\psi_{ij} = \psi_{ji} \ge 0, \ 1 \le i \ne j \le N$ and $\psi_{ii} = 0, \ 1 \le i \le N$.\\
%
%\item For any $i,j \in V$, there exists one path from $i$ to $j$, i.e.,
%\begin{equation*}
%i  = i_0 \rightarrow i_1 \rightarrow \cdots \rightarrow i_p =j, \quad (i_{l-1}, i_l) \in E, \ 1 \le l \le p.
%\end{equation*}
%\end{enumerate}

%{\color{red}(Review previous works...)}\newline
For the all-to-all coupling case, i.e., $\psi_{ij} = 1$, the second-order phase model \eqref{s_phs} with zero frustration ($\alpha = 0$) has been studied in \cite{C-H-Y11} and complete synchronization estimates for some restricted class of initial configurations were provided. Moreover, the formational structure of phase-locked state was analyzed in \cite{C-H-N13}. Later on, the authors in \cite{C-H-M18,C-L-H-X-Y14,H-J-K19} extended the results to more generic initial data. On the other hand, for the locally coupled case, the authors in \cite{C-L19,C-L-H-X-Y14,L-X-Y14,W-Q17} addressed the synchronization problem of system \eqref{s_phs} without frustration on a symmetric and connected network, and sufficient conditions on initial phase distributions and system parameters leading to complete synchronization were presented. Note that there is no information on the convergence rate in \cite{C-L19,C-L-H-X-Y14,W-Q17}. Additionally, the interplay between inertia and frustration on an all-to-all network were investigated in \cite{Ha-K-L14, L-H16}, and the occurrence of asymptotic synchronized behavior and well-ordering property were shown. For more information, we refer to the literatures \cite{D-Z-P-L-J18,H-K-P18,H-K-Z18, H-C-Y-S99,T-L-O97,Ta-L-O97,W-C21}. 
%The authors in \cite{Z-Z23} have considered the locally coupled Kuramoto model with inertia and frustration, and presented sufficient frameworks to ensure the emergence of complete synchronization. However, the initial phase diameter is relatively restricted and required to be small may due to the usage of $l_2$ estimate.

In this paper, we will study the inertial Kuramoto model with frustration on a symmetric and connected network. As far as we know, there is comparatively less theoretical work on the joint effects of inertia, frustration, and network structure. It is natural to ask the following questions:
\begin{enumerate}
\item Will the frequency synchronization asymptotically occur for the inertial Kuramoto model with frustration \eqref{s_phs} on a locally coupled graph?

\item How fast is the rate of convergence to synchronized behavior?
\end{enumerate}
%{1. emergence of synchronization for inertial Kuramoto model with inertia and frustration on a digraph
%
%2. how fast is the emergence rate.\newline}
There are mainly three difficulties to answer the above two questions. Below, we will briefly discuss these difficulties and introduce our corresponding strategy. For convenience, we will list all the notations in the end of this section. 

Firstly, due to the inertial term, the oscillation behaviors are not negligible and it is more difficult to make the phase diameter uniformly bounded. Therefore, we need stronger constraints on the initial configurations and system parameters. More precisely, we assume:\newline

\noindent \textit{{\bf Assumption $(\mathcal{H}1)$:}  The initial configurations satisfy the following condition,
\begin{equation}2m D(\Omega) + 4mK\psi_u \sin \alpha + (1+4mK\psi_u \cos \alpha) D_\theta(0) + 3m D_\omega(0) < \beta,\label{assum_2}\end{equation}}
where $\beta \in (0,\pi)$ is a suitable constant. 

$\ $

\noindent The above assumption shows that, the initial phase diameter $D_\theta(0)$ is allowed to be close to $\pi$ when the strength of inertia $m$ is small. The authors in \cite{Z-Z23} have considered the locally coupled Kuramoto model with inertia and frustration, and presented sufficient frameworks to ensure the emergence of complete synchronization. However, the initial phase diameter is relatively restricted and required to be small even for small inertia may due to the usage of $l_2$ estimate approach. On the other hand, when the strength of inertia $m$ is large, the oscillations will be strong, then the initial phase diameter is necessary to be small in above assumption. No matter the cases, the assumption allows us to prove that the oscillators will keep staying in a half circle.  

Secondly, the connection topology considered in this paper may not be an all-to-all network. Therefore, the dynamics of the diameters lack the uniform damping. Moreover, the frustration term in sine coupling function results in the lack of second-order gradient-like flow structure, hence, the energy method employed in \cite{C-L19,C-L-H-X-Y14} can not be directly applied in our setting.  To handle this difficulty and inspired by the approach employed in \cite{H-L-Z20}, we intend to study the hypo-coercive dynamics of the convex combination of the oscillators (see \eqref{Q_function}, \eqref{F_function}, \eqref{A_function} and \eqref{B_function}) for the emergence of synchrony. Owing to the symmetric property of network, we take more simpler constructions of convex combinations to address the problem.
%$(\sum_{n=0}^{N-1} c^n)^{-1} \sum_{i=1}^N c^{i-1} z_{l_i}(t)$, where $(l_i)$ is some permutation of $(i)$.
We require the following assumptions on the coefficient $c$ of  convex combination:\newline

\noindent \textit{{\bf Assumption $(\mathcal{H}2)$:} assume the positive constants $\gamma$, $D^\infty$, $\delta$ and $c$ satisfy the following constraints,
\begin{equation}
\beta<\gamma<\pi,\quad 0 < D^\infty <\min \left\{\frac{\pi}{2}, \beta \right\},\quad \alpha < \delta < \frac{\pi}{2}-D^\infty,\quad  c> \max\left\{\frac{2}{\cos (D^\infty + \delta)}, \frac{2}{1 - \frac{\beta}{\gamma}}\right\}.\label{assume_1}
\end{equation}}

$\ $

\noindent On one hand, we can prove the dynamics of the convex combinations are dissipative. On the other hand, by choosing a proper coefficient $c$ showing in \eqref{assume_1}, we may prove the diameters of the convex combinations (maximal one minus minimal one) are correspondingly equivalent to the diameters of phase, frequency, acceleration and jerk (see \eqref{phs_size}, \eqref{fre_size}, \eqref{acce_size} and \eqref{B_size}). Then, the dissipation of the convex combinations can eventually imply the hypo-coercivity of the phase and frequency diameters. 

Thirdly, 
%In the all-to-all case, one may show well ordering structure such as in \cite{L-H16}, so that the diameter $D(\theta(t))=\theta_i-\theta_j$ for fixed $i,j$. This allows the diameter to be second-order smooth and its dynamics can be effectively described by a second-order equation as shown in \cite{Ha-K-L14}. However, establishing such a well-ordering structure for the model \eqref{s_phs} on a network still remains an open problem. 
the diameters of the convex combinations are only Lipschitz continuous and their second order derivatives contains singularities. Therefore, even we get the dissipative structures in the second order differential equations, it is not sufficient to conclude the emergence of synchronization. To address this issue, we combine the diameters of convex combinations together to construct new energy functions $\mathcal{E}_1$ and $\mathcal{E}_2$ (see \eqref{energy_phs} and \eqref{energy_fre} for details of the construction), and propose the following assumptions on system parameters:\newline

\noindent\textit{{\bf Assumption $(\mathcal{H}3)$:} Let $\eta=1-2c^{-1}$ and assume the coupling strength $K$ and the inertia $m$ satisfy the following constraints,
\begin{align}
&mK < \min \left\{\frac{\eta^2\psi_l \sin \gamma }{16N\psi_u^2(\sum_{n=0}^{N-1}c^n) \gamma}, \frac{N (\sum_{n=0}^{N-1} c^n)}{2\psi_l } \right\},
%\quad mK < \frac{N(\sum_{n=0}^{N-1} c^n)}{\psi_l}
% \quad mK < \frac{\eta^2 \psi_l}{16N\psi_u^2 (\sum_{n=0}^{N-1} c^n)}
 \label{assume_3}\\
&K >  \frac{4 (D(\Omega) + 2K \psi_u \sin \alpha)N(\sum_{n=0}^{N-1} c^n)\gamma}{D^\infty\eta \psi _l \cos \alpha \sin \gamma} ,
%&\frac{4 (D(\Omega) + 2K \psi_u \sin \alpha)N(\sum_{n=0}^{N-1} c^n)\gamma}{K\psi _l \cos \alpha \sin \gamma} < \beta, \quad 
%&\frac{4 (D(\Omega) + 2K \psi_u \sin \alpha)N(\sum_{n=0}^{N-1} c^n)\gamma}{\eta K\psi _l \cos \alpha \sin \gamma} \le D^\infty, 
\label{assume_4}\\
&K > \frac{64 (D(\Omega) + 2K \psi_u \sin \alpha)(D^\infty\cos \alpha + \sin \alpha)N^3\psi^2_u(\sum_{n=0}^{N-1} c^n)^3\gamma^2 }{ \eta^3 \psi _l^3 \cos \alpha \sin^2 \gamma}.\label{assume_5}
\end{align}}

$\ $

\noindent It is easy to check that, when $K$ is sufficiently large, and $\alpha$ and $m$ are sufficiently small, the set of parameters satisfying above assumptions is not empty. Under above assumptions, we can prove the dynamics of the energy functions are governed by first order dissipative differential inequalities (see Lemma \ref{phs_sum_dynamics} and Lemma \ref{syn}). Then, the Lipschitz continuity is enough for us to obtain the synchronization estimates (see Theorem \ref{main}).   

Now, we are ready to claim our main result.

\begin{theorem}\label{main}
Let $(\theta(t), \omega(t))$ be a solution to system \eqref{s_phs} and suppose Assumption $(\mathcal{H}1)-(\mathcal{H}3)$ hold. Then, there exists a time $t_* \ge 0$ such that
\begin{equation*}
D_\omega(t) \le Ce^{- \Lambda (t-t_*)}, \quad t \ge t_*,
\end{equation*}
where $C$ and $\Lambda$ are positive constants depending on initial data and system parameters.
\end{theorem}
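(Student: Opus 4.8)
The plan is to obtain the exponential decay of $D_\omega$ by a two–stage argument, first controlling the phase diameter on a finite time interval $[0,t_*]$ and then running a closed first–order dissipative differential inequality for $D_\omega$ (or rather for the energy $\mathcal{E}_2$) beyond $t_*$. Concretely, I would first use Assumption $(\mathcal{H}1)$ together with a continuity/bootstrap argument to show that the phase diameter $D_\theta(t)$ stays below $\beta<\pi$ for all $t\ge 0$; this is where the particular combination $2mD(\Omega)+4mK\psi_u\sin\alpha+(1+4mK\psi_u\cos\alpha)D_\theta(0)+3mD_\omega(0)<\beta$ is designed to close, by absorbing the transient overshoot caused by the inertial term. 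Keeping all oscillators in a half circle is essential because it makes $\sin(\theta_j-\theta_i+\alpha)$ have a definite sign structure and gives the coercivity constant $\cos(D^\infty+\delta)$ appearing in $(\mathcal{H}2)$.

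Next I would introduce the convex–combination quantities $\mathcal{Q},\mathcal{F},\mathcal{A},\mathcal{B}$ of \eqref{Q_function}--\eqref{B_function} and record the two facts promised in the introduction: (i) their diameters are comparable, up to constants depending on $c$, to $D_\theta$, $D_\omega$, and the diameters of acceleration and jerk respectively (via \eqref{phs_size}--\eqref{B_size}), and (ii) along solutions they satisfy second–order differential inequalities with a genuine damping term, coming from the symmetry of $\Psi$ and connectivity of $G$, once the phases lie in a half circle. The role of the coefficient $c>\max\{2/\cos(D^\infty+\delta),\,2/(1-\beta/\gamma)\}$ is exactly to make the dissipation in these inequalities dominate the off–diagonal coupling, so that $\eta=1-2c^{-1}>0$ controls the effective damping rate. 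I would then form the energy functions $\mathcal{E}_1$ (built from the phase–type combinations) and $\mathcal{E}_2$ (built from the frequency–type combinations), and invoke Lemma \ref{phs_sum_dynamics} and Lemma \ref{syn}, which under $(\mathcal{H}3)$ give first–order dissipative inequalities of the form $\frac{d}{dt}\mathcal{E}_1 \le -c_1 \mathcal{E}_1 + (\text{small const})$ on $[0,t_*]$ and $\frac{d}{dt}\mathcal{E}_2 \le -c_2 \mathcal{E}_2$ for $t\ge t_*$, where $t_*$ is the time after which $\mathcal{E}_1$ has decayed enough that the phase diameter has entered the region $D_\theta<D^\infty$.

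With those lemmas in hand the conclusion is essentially Grönwall: on $[0,t_*]$ the inequality for $\mathcal{E}_1$ forces $D_\theta$ to drop below $D^\infty$ in finite time $t_*$ (estimable from the initial data and parameters), and then for $t\ge t_*$ the homogeneous inequality $\frac{d}{dt}\mathcal{E}_2\le -c_2\mathcal{E}_2$ yields $\mathcal{E}_2(t)\le \mathcal{E}_2(t_*)e^{-c_2(t-t_*)}$; comparability of $\mathcal{E}_2$ with $D_\omega$ (again through the $c$–dependent equivalence \eqref{fre_size}) then gives $D_\omega(t)\le Ce^{-\Lambda(t-t_*)}$ with $\Lambda$ proportional to $c_2$ (hence to $\eta^3\psi_l^3\cos\alpha\sin^2\gamma$ divided by the relevant powers of $N,\psi_u,\gamma,\sum c^n$) and $C$ depending on $\mathcal{E}_2(t_*)$ and the equivalence constants. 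I expect the main obstacle to be the second step rather than this final assembly: verifying that the second–order differential inequalities for the convex combinations genuinely close into first–order dissipative inequalities for $\mathcal{E}_1,\mathcal{E}_2$ — in particular handling the singularities in the second derivatives of the Lipschitz diameters (one must work with upper Dini derivatives and argue index by index about which oscillator realizes the extremum), and tracking the frustration–induced inhomogeneous terms $D(\Omega)+2K\psi_u\sin\alpha$ carefully enough that assumptions \eqref{assume_4}--\eqref{assume_5} are exactly what is needed to push $D_\theta$ below $D^\infty$ and to make the frequency energy strictly dissipative. Establishing the half–circle confinement in Step 1 under the inertial overshoot is the second delicate point, but $(\mathcal{H}1)$ is tailored for it.
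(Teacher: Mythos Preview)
Your proposal is correct and follows essentially the same two–stage architecture as the paper: use the $\mathcal{E}_1$ Gronwall inequality (Lemma \ref{phs_sum_dynamics}) to trap the phases in the small arc $D_\theta<D^\infty$ after a finite time $t_*$ (Lemma \ref{small}), and then run the homogeneous dissipative inequality for $\mathcal{E}_2$ (Lemma \ref{syn}) to obtain exponential decay of $D_\omega$ via the equivalence \eqref{fre_size}. Two minor discrepancies worth noting: the paper handles the non–smoothness of the diameters not via Dini derivatives but by exploiting analyticity of the solution to partition $[0,\infty)$ into intervals on which the orderings are frozen, and the actual rate is simply $\Lambda=\dfrac{K\psi_l}{2N\sum_{n=0}^{N-1}c^n}$ rather than the more elaborate expression you conjectured.
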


The rest of the paper are organized as below. In Section \ref{sec:2}, we introduce the construction of convex combinations of phase, frequency, acceleration and jerk, and show the equivalence relation between the diameter of convex combination and the diameter of original state quantity. In Section \ref{sec:3}, we present a first-order differential inequality of energy function $\mathcal{E}_1$ that can control the phase diameter, and show that the phase diameter will be uniformly bounded by a small value after some finite time. In Section 4, we derive a dissipative differential inequality of energy function $\mathcal{E}_2$ that can dominate the frequency diameter. This ultimately yields the occurrence of frequency synchronization exponentially fast. Section \ref{sec:5} is devoted to a brief summary of the main result.
\newline

\noindent {\bf Notation:} For notational simplicity, we introduce some notations which will be frequently used throughout the paper:
\begin{equation*}
\begin{aligned}
& \theta(t) = (\theta_1(t), \cdots, \theta_N(t)), \quad D_\theta(t) = \max_{1\le i \le N} \theta_i(t) - \min_{1 \le i \le N} \theta_i(t),\\
&\omega_i(t) = \dot{\theta}_i(t), \quad \omega(t) = (\omega_1(t), \cdots, \omega_N(t)), \quad D_\omega(t) = \max_{1 \le i \le N} \omega_i(t) - \min_{1 \le i \le N} \omega_i(t),\\
&\Omega_M =\max_{1 \le i \le N} \Omega_i, \quad \Omega_m = \min_{1 \le i \le N} \Omega_i,\quad D(\Omega) = \Omega_M - \Omega_m , \quad \psi_u = \max_{(i,j) \in E} \psi_{ji}, \quad \psi_l = \min_{(i,j) \in E} \psi_{ji},\\
&a_i(t) = \dot{\omega}_i(t), \quad a(t) = (a_1(t),\cdots,a_N(t)), \quad D_a(t) = \max_{1 \le i,j \le N} |a_i(t) - a_j(t)|,\\
&b_i(t) = \dot{a}_i(t), \quad b(t) = (b_1(t),\cdots,b_N(t)), \quad D_b(t) = \max_{1 \le i,j \le N} |b_i(t) - b_j(t)|.\\
\end{aligned}
\end{equation*}

%\noindent {\bf Assumption $(\mathcal{H})$:} Our assumptions on system parameters are given as follows:
%\begin{align}
%& 0 < \beta < \gamma < \pi, \quad c > \max\left\{\frac{2}{\cos (D^\infty + \delta)}, \frac{2}{1 - \frac{\beta}{\gamma}}\right\}, \quad \eta = 1 - \frac{2}{c},  \quad 0 < D^\infty <\min \left\{\frac{\pi}{2}, \beta \right\}, \quad D^\infty + \delta < \frac{\pi}{2}, \label{assume_1}\\
%&\alpha < \delta, \quad 2m D(\Omega) + 4mK\psi_u \sin \alpha + (1+4mK\psi_u \cos \alpha) D_\theta(0) + 3m D_\omega(0) < \beta,\label{assume_2}\\
%&mK < \min \left\{\frac{\eta^2\psi_l \sin \gamma }{16N\psi_u^2(\sum_{n=0}^{N-1}c^n) \gamma}, \frac{N (\sum_{n=0}^{N-1} c^n)}{2\psi_l } \right\},
%%\quad mK < \frac{N(\sum_{n=0}^{N-1} c^n)}{\psi_l}
%% \quad mK < \frac{\eta^2 \psi_l}{16N\psi_u^2 (\sum_{n=0}^{N-1} c^n)}
% \label{assume_3}\\
%&K >  \frac{4 (D(\Omega) + 2K \psi_u \sin \alpha)N(\sum_{n=0}^{N-1} c^n)\gamma}{D^\infty\eta \psi _l \cos \alpha \sin \gamma} ,
%%&\frac{4 (D(\Omega) + 2K \psi_u \sin \alpha)N(\sum_{n=0}^{N-1} c^n)\gamma}{K\psi _l \cos \alpha \sin \gamma} < \beta, \quad 
%%&\frac{4 (D(\Omega) + 2K \psi_u \sin \alpha)N(\sum_{n=0}^{N-1} c^n)\gamma}{\eta K\psi _l \cos \alpha \sin \gamma} \le D^\infty, 
%\label{assume_4}\\
%&K > \frac{64 (D(\Omega) + 2K \psi_u \sin \alpha)(D^\infty\cos \alpha + \sin \alpha)N^3\psi^2_u(\sum_{n=0}^{N-1} c^n)^3\gamma^2 }{ \eta^3 \psi _l^3 \cos \alpha \sin^2 \gamma},\label{assume_5}\\
%\end{align}
%where $c, \beta, \gamma, \eta, D^\infty, \delta$ are suitable positive constants.\newline

\section{Preliminaries}\label{sec:2}
\setcounter{equation}{0}
In this section, we review some elementary concepts and estimates,  and introduce the constructions of convex combinations and new energy functions.

First, we recall the definition of complete synchronization.

\begin{definition}
Let $(\theta(t), \omega(t))$ be a solution to system \eqref{s_phs}. The system exhibits asymptotic complete frequency synchronization if and only if the relative frequency difference tends to zero asymptotically:
\begin{equation*}
\lim_{t \to \infty} (\omega_i(t) - \omega_j(t)) = 0, \quad \text{for all} \ i,j = 1,2,\cdots, N.
\end{equation*}
\end{definition}

Next, we present the construction of convex combination.
 Let $z(t) = \{z_i(t)\}_{i=1}^N$ be the state quantity of an ensemble of N-oscillators and $c>0$ is a suitable constant mentioned in \eqref{assume_1}. For any given time $t$, we order the oscillators' state quantity $z(t)$ from minimum to maximum
\begin{equation}\label{random_order}
z_{l_1}(t) \le z_{l_2}(t) \le \cdots \le z_{l_N}(t),
\end{equation}
where $l_1 l_2 \cdots l_N$ is a permutation of $\{1,2,\cdots,N\}$ depending on the state $z(t)$ at time $t$. Then, we construct a new function $Y(t)$ related to convex combinations of $z(t)$:
\begin{equation}\label{Y_function}
Y(t) = \bar{z}(t) - \underline{z}(t),
\end{equation}
where
\begin{align}
\bar{z}(t) = \frac{1}{\sum_{n=0}^{N-1} c^n} \sum_{i=1}^N c^{i-1} z_{l_i}(t), \quad \underline{z}(t) = \frac{1}{\sum_{n=0}^{N-1} c^n} \sum_{i=1}^{N} c^{N-i} z_{l_i}(t). \label{barunder_z}
\end{align}
The following lemma shows that the function $Y(t)$ is equivalent to the diameter $D_z(t)$. 

\begin{lemma}\label{size}
Let $z(t) = \{z_i(t)\}_{i=1}^N$ be the state quantity of an ensemble of N-oscillators at time $t$.
Then, we have
\begin{equation*}
\eta D_z(t)\le Y(t) \le D_z(t), \quad \eta = 1- \frac{2}{c},
\end{equation*}
where $Y(t)$ is defined in \eqref{Y_function} and $D_z(t) = \max\limits_{1 \le i \le N} z_i(t) - \min\limits_{1 \le i \le N} z_i(t)$.
\end{lemma}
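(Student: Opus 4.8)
The plan is to work entirely with the ordered values $z_{l_1}\le\cdots\le z_{l_N}$. Write $Z:=\sum_{n=0}^{N-1}c^n=\frac{c^N-1}{c-1}$ (the closed form is available since $(\mathcal H2)$ forces $c>2>1$), and set $m:=z_{l_1}=\min_i z_i$, $M:=z_{l_N}=\max_i z_i$, so that $D_z(t)=M-m$. If $M=m$ then $Y(t)=0$ and there is nothing to prove, so assume $M>m$.

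For the upper bound, observe that $\bar z(t)$ and $\underline z(t)$ are both convex combinations of $z_{l_1},\dots,z_{l_N}$: the coefficients $c^{i-1}/Z$, respectively $c^{N-i}/Z$, are nonnegative and sum to $1$. Hence both quantities lie in $[m,M]$. To see that $Y(t)=\bar z(t)-\underline z(t)\ge 0$, I would write $Y(t)=\frac1Z\sum_{i=1}^N(c^{i-1}-c^{N-i})z_{l_i}$ and pair the index $i$ with $N+1-i$; the coefficient of $z_{l_{N+1-i}}$ is $c^{N-i}-c^{i-1}$, the negative of that of $z_{l_i}$, so the pair contributes $\frac1Z(c^{i-1}-c^{N-i})(z_{l_i}-z_{l_{N+1-i}})$, a product of two factors of the same (non-positive) sign whenever $i\le (N+1)/2$. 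Summing over the pairs gives $Y(t)\ge0$, and since $\bar z(t),\underline z(t)\in[m,M]$ we conclude $0\le Y(t)\le M-m=D_z(t)$.

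For the lower bound, the idea is to exploit that $\bar z$ puts the large weight $c^{N-1}/Z$ on $M$ while $\underline z$ puts it on $m$. In $\bar z$, keep the top term and replace every other $z_{l_i}$ by $m$; using $\sum_{i=1}^{N-1}c^{i-1}=Z-c^{N-1}$ this gives
\[
\bar z(t)\ \ge\ \frac{c^{N-1}M+(Z-c^{N-1})m}{Z}\ =\ m+\frac{c^{N-1}}{Z}(M-m),
\]
and symmetrically, keeping the bottom term of $\underline z$ and replacing every other $z_{l_i}$ by $M$,
\[
\underline z(t)\ \le\ \frac{c^{N-1}m+(Z-c^{N-1})M}{Z}\ =\ M-\frac{c^{N-1}}{Z}(M-m).
\]
Subtracting yields $Y(t)\ge\bigl(\tfrac{2c^{N-1}}{Z}-1\bigr)(M-m)$, so it only remains to check the purely algebraic inequality $\tfrac{2c^{N-1}}{Z}-1\ge\eta=1-\tfrac2c$. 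Substituting $Z=\frac{c^N-1}{c-1}$ and clearing denominators, this is equivalent to $c^{N-1}\ge\frac{c^N-1}{c}$, i.e. $c^N\ge c^N-1$, which always holds; hence $Y(t)\ge\eta D_z(t)$.

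I do not anticipate a genuine obstacle here: the only points needing care are the sign bookkeeping in the pairing argument used for $\bar z(t)\ge\underline z(t)$, and confirming that the final algebraic reduction is valid because $c>1$, which is guaranteed by Assumption $(\mathcal H2)$. Everything else reduces to one-line substitutions.
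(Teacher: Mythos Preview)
Your argument is correct and follows essentially the same route as the paper: for the lower bound you isolate the extreme terms in $\bar z$ and $\underline z$ and arrive at the identical intermediate bound $Y(t)\ge\bigl(\tfrac{2c^{N-1}}{Z}-1\bigr)D_z(t)$, then verify the same algebraic inequality $\tfrac{2c^{N-1}}{Z}-1\ge 1-\tfrac{2}{c}$ (the paper writes this as $\tfrac{2}{Z}\sum_{i=1}^{N-1}c^{i-1}\le\tfrac{2}{c}$, which is equivalent). Your upper bound is slightly more explicit than the paper's one-line ``easy to see,'' and the pairing argument for $Y(t)\ge 0$ is an extra detail the paper omits; one cosmetic point is that your local use of $m$ for $\min_i z_i$ clashes with the paper's global use of $m$ for the inertia parameter.
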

\begin{proof}
For the sake of discussion, at time $t$, we assume without loss of generality
\begin{equation}\label{regular_order}
z_1(t) \le z_2 (t) \le \cdots \le z_N(t),
\end{equation}
if necessary, we still adopt \eqref{random_order}.
Then based on \eqref{regular_order} and the construction of $Y(t)$ in \eqref{Y_function}, we have
\begin{equation}\label{Y_definition}
Y(t) = \bar{z}(t) - \underline{z}(t), \quad \bar{z}(t) = \frac{1}{\sum_{n=0}^{N-1} c^n} \sum_{i=1}^N c^{i-1} z_{i}(t), \quad \underline{z}(t) = \frac{1}{\sum_{n=0}^{N-1} c^n} \sum_{i=1}^{N} c^{N-i} z_{i}(t).
\end{equation}
It is easy to see that
\begin{equation*}
Y(t) \le z_N(t) - z_1(t) = D_z(t).
\end{equation*}
Next, it follows from \eqref{Y_definition} that
\begin{equation*}
\begin{aligned}
Y(t) &= \bar{z}(t) - \underline{z}(t) = \frac{1}{\sum_{n=0}^{N-1} c^n} \sum_{i=1}^N c^{i-1} z_{i}(t) -  \frac{1}{\sum_{n=0}^{N-1} c^n} \sum_{i=1}^{N} c^{N-i} z_{i}(t)\\
%&= z_N(t) - z_1(t) + \frac{1}{\sum_{n=0}^{N-1} c^n} \sum_{i=1}^N c^{i-1} z_{i}(t) - z_N(t) + z_1(t) - \frac{1}{\sum_{n=0}^{N-1} c^n} \sum_{i=1}^{N} c^{N-i} z_{i}(t)\\
&= z_N(t) - z_1(t) + \frac{1}{\sum_{n=0}^{N-1} c^n} \sum_{i=1}^{N-1} c^{i-1} (z_{i}(t)-z_N(t)) + \frac{1}{\sum_{n=0}^{N-1} c^n} \sum_{i=2}^{N} c^{N-i} (z_1(t) -z_{i}(t)).
\end{aligned}
\end{equation*}
Then, according to \eqref{regular_order}, we have
\begin{equation*}
\begin{aligned}
Y(t)&\ge z_N(t) - z_1(t) + \frac{1}{\sum_{n=0}^{N-1} c^n} \sum_{i=1}^{N-1} c^{i-1} (z_{1}(t)-z_N(t)) + \frac{1}{\sum_{n=0}^{N-1} c^n} \sum_{i=2}^{N} c^{N-i} (z_1(t) -z_{N}(t))\\
%&= \left( 1 - \frac{1}{\sum_{n=0}^{N-1} c^n} \sum_{i=1}^{N-1} c^{i-1} -  \frac{1}{\sum_{n=0}^{N-1} c^n} \sum_{i=2}^{N} c^{N-i} \right) (z_N(t) - z_1(t)) \\
&= \left( 1 - \frac{2}{\sum_{n=0}^{N-1} c^n} \sum_{i=1}^{N-1} c^{i-1} \right) (z_N(t) - z_1(t)) \ge \left(1 - \frac{2}{c}\right) D_z(t),
\end{aligned}
\end{equation*}
where we used the fact
\begin{equation*}
\frac{2}{\sum_{n=0}^{N-1} c^n} \sum_{i=1}^{N-1} c^{i-1} = 2\frac{c^{N-1} - 1}{c^N -1} \le 2\frac{c^{N-1} - 1}{c^N -c}  = \frac{2}{c}.
\end{equation*}
%Therefore, we finish the proof of this lemma.
\end{proof}

In the sequel, we apply the construction principle in \eqref{Y_function} and Lemma \ref{size} to construct crucial functions as follows:

\begin{enumerate}
\item {\bf (Definition of $Q(t)$)} For any given time $t \ge 0$, define 
\begin{equation}\label{Q_function}
Q(t) = \bar{\theta}(t) - \underline{\theta}(t),
\end{equation}
where
\begin{align*}
\bar{\theta}(t) = \frac{1}{\sum_{n=0}^{N-1} c^n} \sum_{i=1}^N c^{i-1} \theta_{l_i}(t), \quad \underline{\theta}(t) = \frac{1}{\sum_{n=0}^{N-1} c^n} \sum_{i=1}^{N} c^{N-i} \theta_{l_i}(t), \label{barunder_Q}
\end{align*}
and $l_1l_2\cdots l_N$ is a permutation of $\{1,2,\cdots, N\}$ depending on the phase value $\theta(t)$ at time $t$ such that
\begin{equation}\label{random_order1}
\theta_{l_1}(t) \le \theta_{l_2}(t) \le \cdots \le \theta_{l_N}(t).
\end{equation}
Moreover, we have
\begin{equation}\label{phs_size}
\begin{aligned}
\eta D_\theta(t) \le Q(t) \le D_\theta(t).
\end{aligned}
\end{equation}

\item {\bf (Definition of $F(t)$)} For any given time $t \ge 0$, define
\begin{equation}\label{F_function}
F(t) = \bar{\omega}(t) - \underline{\omega}(t),
\end{equation}
where
\begin{align*}
\bar{\omega}(t) = \frac{1}{\sum_{n=0}^{N-1} c^n} \sum_{i=1}^N c^{i-1} \omega_{k_i}(t), \quad \underline{\omega}(t) = \frac{1}{\sum_{n=0}^{N-1} c^n} \sum_{i=1}^{N} c^{N-i} \omega_{k_i}(t), \label{barunder_F}
\end{align*}
and $k_1k_2\cdots k_N$ is a permutation of $\{1,2,\cdots, N\}$ depending on the frequency value $\omega(t)$ at time $t$ such that
\begin{equation}\label{random_order3}
\omega_{k_1}(t) \le \omega_{k_2}(t) \le \cdots \le \omega_{k_N}(t).
\end{equation}
Moreover, we have
\begin{equation}\label{fre_size}
\eta D_\omega(t) \le F(t) \le D_\omega(t).
\end{equation}

\item {\bf (Definition of $A(t)$)} For any given time $t \ge 0$, define
\begin{equation}\label{A_function}
A(t) = \bar{a}(t) - \underline{a}(t),
\end{equation}
where
\begin{align*}
\bar{a}(t) = \frac{1}{\sum_{n=0}^{N-1} c^n} \sum_{i=1}^N c^{i-1} a_{p_i}(t), \quad \underline{a}(t) = \frac{1}{\sum_{n=0}^{N-1} c^n} \sum_{i=1}^{N} c^{N-i} a_{p_i}(t), \label{barunder_F}
\end{align*}
and $p_1p_2\cdots p_N$ is a permutation of $\{1,2,\cdots, N\}$ depending on the acceleration value $a(t)$ at time $t$ such that
\begin{equation}\label{random_order4}
a_{p_1}(t) \le a_{p_2}(t) \le \cdots \le a_{p_N}(t).
\end{equation}
Moreover, we have
\begin{equation}\label{acce_size}
\eta D_a(t) \le A(t) \le D_a(t).
\end{equation}

\item {\bf (Definition of $B(t)$)} For any given time $t \ge 0$, define
\begin{equation}\label{B_function}
B(t) = \bar{b}(t) - \underline{b}(t),
\end{equation}
where
\begin{align*}
\bar{b}(t) = \frac{1}{\sum_{n=0}^{N-1} c^n} \sum_{i=1}^N c^{i-1} b_{q_i}(t), \quad \underline{b}(t) = \frac{1}{\sum_{n=0}^{N-1} c^n} \sum_{i=1}^{N} c^{N-i} b_{q_i}(t), \label{barunder_F}
\end{align*}
and $q_1q_2\cdots q_N$ is a permutation of $\{1,2,\cdots, N\}$ depending on the value $b(t)$ at time $t$ such that
\begin{equation}\label{random_order5}
b_{q_1}(t) \le b_{q_2}(t) \le \cdots \le b_{q_N}(t).
\end{equation}
Moreover, we have
\begin{equation}\label{B_size}
\eta D_b(t) \le B(t) \le D_b(t).
\end{equation}
\end{enumerate}

Note that due to the analyticity of solution to system \eqref{s_phs}, functions $Q(t), F(t), A(t)$ and $B(t)$ respectively introduced in \eqref{Q_function},\eqref{F_function}, \eqref{A_function} and \eqref{B_function} are Lipschitz continuous. Based on this fine property, we define the following two functions $\mathcal{E}_1(t)$ and $\mathcal{E}_2(t)$:
\begin{equation}\label{energy_phs}
\mathcal{E}_1(t) : = Q(t) + \frac{\eta\psi_l \sin \gamma }{2N\psi_u(\sum_{n=0}^{N-1}c^n) \gamma} m F(t) + 2m^2 A(t),
\end{equation}
\begin{equation}\label{energy_fre}
\mathcal{E}_2(t) := F(t) + \frac{\eta \psi_l}{2N\psi_u(\sum_{n=0}^{N-1}c^n)} m A(t) + 2m^2 B(t),
\end{equation}
which will help us to capture the dissipation mechanism of system.

Finally, we provide a basic estimate related to initial configuration.

\begin{lemma}
Let $(\theta(t), \omega(t))$ be a solution to system \eqref{s_phs} and assume the initial data $(\theta(0), \omega(0))$ satisfies the following condition 
\begin{equation}\label{assume_22}
2m D(\Omega) + 4mK\psi_u \sin \alpha + (1+4mK\psi_u \cos \alpha) D_\theta(0) + 3m D_\omega(0) < \beta.
\end{equation}
Then, we have
\begin{equation}\label{assume_21}
D_\theta(0) + \frac{\eta\psi_l \sin \gamma }{2N\psi_u(\sum_{n=0}^{N-1} c^n)\gamma}  m D_\omega(0) + 2m^2 D_a (0)< \beta.
\end{equation}
\end{lemma}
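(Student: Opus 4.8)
The plan is to reduce \eqref{assume_21} to a bound on the single new quantity it contains, the initial acceleration diameter $D_a(0)$, which can be read off directly from \eqref{s_phs}. Evaluating \eqref{s_phs} at $t=0$ and recalling that $a_i=\ddot\theta_i$ gives
\[
m\,a_i(0) = -\omega_{i0} + \Omega_i + \frac{K}{N}\sum_{j=1}^N \psi_{ij}\sin(\theta_{j0}-\theta_{i0}+\alpha),\qquad i=1,\dots,N .
\]
Subtracting the identities for two indices $i$ and $k$ and using the triangle inequality, together with $|\omega_{i0}-\omega_{k0}|\le D_\omega(0)$ and $|\Omega_i-\Omega_k|\le D(\Omega)$, yields
\[
m\,|a_i(0)-a_k(0)| \le D_\omega(0) + D(\Omega) + \frac{K}{N}\Big|\sum_{j}\psi_{ij}\sin(\theta_{j0}-\theta_{i0}+\alpha)\Big| + \frac{K}{N}\Big|\sum_{j}\psi_{kj}\sin(\theta_{j0}-\theta_{k0}+\alpha)\Big| .
\]

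Next I would estimate each coupling sum crudely. Writing $\sin(\theta_{j0}-\theta_{i0}+\alpha)=\sin(\theta_{j0}-\theta_{i0})\cos\alpha+\cos(\theta_{j0}-\theta_{i0})\sin\alpha$ and using $|\sin x|\le |x|$, $|\theta_{j0}-\theta_{i0}|\le D_\theta(0)$ and $0\le\psi_{ij}\le\psi_u$, every summand is bounded by $\psi_u\big(D_\theta(0)\cos\alpha+\sin\alpha\big)$, so each of the two sums is at most $K\psi_u\big(D_\theta(0)\cos\alpha+\sin\alpha\big)$ after dividing by $N$. Taking the maximum over $i,k$ gives
\[
m\,D_a(0) \le D_\omega(0) + D(\Omega) + 2K\psi_u\cos\alpha\, D_\theta(0) + 2K\psi_u\sin\alpha .
\]

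To finish, I would multiply the last inequality by $2m$ and add $D_\theta(0)+\frac{\eta\psi_l\sin\gamma}{2N\psi_u(\sum_{n=0}^{N-1}c^n)\gamma}\,mD_\omega(0)$ to both sides, so that the left-hand side of \eqref{assume_21} is dominated by
\[
(1+4mK\psi_u\cos\alpha)D_\theta(0) + 2mD(\Omega) + 4mK\psi_u\sin\alpha + \Big(2+\frac{\eta\psi_l\sin\gamma}{2N\psi_u(\sum_{n=0}^{N-1}c^n)\gamma}\Big)mD_\omega(0) .
\]
Because $\eta=1-\frac{2}{c}<1$, $\psi_l\le\psi_u$, $\sin\gamma<\gamma$, and $N\big(\sum_{n=0}^{N-1}c^n\big)\ge 1$, the coefficient $\frac{\eta\psi_l\sin\gamma}{2N\psi_u(\sum_{n=0}^{N-1}c^n)\gamma}$ is less than $\frac12$, hence the factor in front of $mD_\omega(0)$ is less than $3$. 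Thus the quantity above is strictly smaller than $2mD(\Omega)+4mK\psi_u\sin\alpha+(1+4mK\psi_u\cos\alpha)D_\theta(0)+3mD_\omega(0)$, which is $<\beta$ by \eqref{assume_22}; this establishes \eqref{assume_21}.

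The statement is really a bookkeeping step, so I do not anticipate a serious obstacle; the only points requiring attention are (i) arranging the elementary coupling bound so that it reproduces exactly the coefficients $4mK\psi_u\sin\alpha$ and $4mK\psi_u\cos\alpha$ appearing in \eqref{assume_22}, and (ii) verifying the harmless inequality $\frac{\eta\psi_l\sin\gamma}{2N\psi_u(\sum_{n=0}^{N-1}c^n)\gamma}\le 1$, which lets the extra $mD_\omega(0)$ contribution coming from $2m^2D_a(0)$ be absorbed into the coefficient $3$.
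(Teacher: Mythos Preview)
Your proof is correct and follows essentially the same approach as the paper: both evaluate \eqref{s_phs} at $t=0$ to bound $mD_a(0)\le D_\omega(0)+D(\Omega)+2K\psi_u\cos\alpha\,D_\theta(0)+2K\psi_u\sin\alpha$, multiply by $2m$, and then absorb the middle coefficient $\frac{\eta\psi_l\sin\gamma}{2N\psi_u(\sum_{n}c^n)\gamma}$ by noting it is at most $1$ (the paper uses $\le 1$ directly, you use the slightly sharper $<\tfrac12$, but either suffices to land on the coefficient $3$ in front of $mD_\omega(0)$). The only cosmetic point is that your ``strictly smaller'' in the last step should be ``at most'' when $D_\omega(0)=0$, but the strict inequality in \eqref{assume_22} still yields \eqref{assume_21}.
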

\begin{proof}
Recall $\omega_i(t) = \dot{\theta}_i(t)$ and $a_i(t) = \dot{\omega}_i(t)$, and we see from \eqref{s_phs} that
\begin{equation}\label{acce_initial}
\begin{aligned}
&m a_i(0) + \omega_i(0) = \Omega_i + \frac{K}{N} \sum_{j=1}^N \psi_{ij} \sin (\theta_j(0) - \theta_i(0) + \alpha),\\
& \text{i.e.} \quad a_i(0) = \frac{1}{m} \left[ - \omega_i(0) +\Omega_i + \frac{K}{N} \sum_{j=1}^N \psi_{ij} \sin (\theta_j(0) - \theta_i(0) + \alpha) \right].
\end{aligned}
\end{equation}
Moreover, we can find some indices $i$ and $j$ such that
\begin{equation*}
D_a(0) = a_i(0) - a_j(0).
\end{equation*}
%From \eqref{acce_initial}, it is easy to see that
%\begin{equation*}
%\begin{aligned}
%&a_i(0) =  \frac{1}{m} \left[ - \omega_i(0) +\Omega_i + \frac{K}{N} \sum_{k=1}^N \psi_{ik} \sin (\theta_k(0) - \theta_i(0) + \alpha) \right],
%\end{aligned}
%\end{equation*}
%\begin{equation*}
% a_j(0) = \frac{1}{m} \left[ - \omega_j(0) +\Omega_j + \frac{K}{N} \sum_{k=1}^N \psi_{jk} \sin (\theta_k(0) - \theta_j(0) + \alpha) \right].
%\end{equation*}
This yields from \eqref{acce_initial} that
\begin{equation}\label{D_acce_initial}
\begin{aligned}
D_a(0) 
%&= a_i(0) - a_j(0)\\
%&=\frac{1}{m} \left[ - \omega_i(0) +\Omega_i + \frac{K}{N} \sum_{k=1}^N \psi_{ik} \sin (\theta_k(0) - \theta_i(0) + \alpha) \right]\\
%&\quad - \frac{1}{m} \left[ - \omega_j(0) +\Omega_j + \frac{K}{N} \sum_{k=1}^N \psi_{jk} \sin (\theta_k(0) - \theta_j(0) + \alpha) \right]\\
&= -\frac{1}{m} (\omega_i(0) - \omega_j(0)) + \frac{1}{m} (\Omega_i - \Omega_j) \\
&\quad + \frac{1}{m} \frac{K}{N} \sum_{k=1}^N \left [ \psi_{ik} \sin (\theta_k(0) - \theta_i(0) + \alpha) - \psi_{jk} \sin (\theta_k(0) - \theta_j(0) + \alpha)\right]\\
&\le \frac{1}{m} D_\omega(0) + \frac{1}{m} D(\Omega) \\
&\quad + \frac{1}{m} \frac{K}{N} \sum_{k=1}^N \left[ \psi_{ik} \sin (\theta_k(0) - \theta_i(0)) \cos \alpha + \psi_{ik} \cos (\theta_k(0) - \theta_i(0)) \sin \alpha   \right.\\
&\qquad\qquad\qquad \left. - \psi_{jk} \sin (\theta_k(0) - \theta_j(0)) \cos \alpha - \psi_{jk} \cos (\theta_k(0) - \theta_j(0)) \sin \alpha \right]\\
&\le \frac{1}{m} D_\omega(0) + \frac{1}{m} D(\Omega)+\frac{2K\psi_u \cos \alpha}{m} D_\theta(0) + \frac{2K \psi_u \sin \alpha}{m}.
\end{aligned}
\end{equation}
where we applied the following facts
\begin{equation*}
\sin (x+y) = \sin x \cos y + \cos x \sin y, \quad |\sin x| \le |x|, \quad |\cos x| \le 1.
\end{equation*}
Thus, it follows from \eqref{D_acce_initial} that
\begin{equation*}
\begin{aligned}
& D_\theta(0) + \frac{\eta\psi_l \sin \gamma }{2N\psi_u(\sum_{n=0}^{N-1} c^n)\gamma}  m D_\omega(0) + 2m^2 D_a (0) \\
&\le D_\theta(0) + m D_\omega(0) \\
&\quad + 2m^2 \left[\frac{1}{m} D_\omega(0) + \frac{1}{m} D(\Omega)+\frac{2K\psi_u \cos \alpha}{m} D_\theta(0) + \frac{2K \psi_u \sin \alpha}{m} \right]\\
&= 2m D(\Omega) + 4mK\psi_u \sin \alpha + (1 + 4mK \psi_u \cos \alpha)D_\theta(0) + 3mD_\omega(0).
\end{aligned}
\end{equation*}
This together with the assumption \eqref{assume_22} yields 
\begin{equation*}
D_\theta(0) + \frac{\eta\psi_l \sin \gamma }{2N\psi_u(\sum_{i=0}^{N-1} c^i)\gamma}  m D_\omega(0) + 2m^2 D_a (0)< \beta ,
\end{equation*}
which completes the proof.
\end{proof}

\section{Entrance to a small region}\label{sec:3}
\setcounter{equation}{0}

In this section, we present that all Kuramoto oscillators will be trapped into a region confined into a quarter circle. More precisely,  we analyze the dynamics of energy function $\mathcal{E}_1(t)$ defined in \eqref{energy_phs} which can control the diameter of phase and frequency, and then provide a first-order Gronwall-type inequality of $\mathcal{E}_1(t)$. This leads to the small uniform boundedness of phase diameter after some finite time.

 We first provide the key estimate which clearly shows the dissipative mechanism in terms of the phase diameter. This will be crucially used in the latter analysis on the dynamics of $Q(t)$.

\begin{lemma}\label{phs_subadd}
Suppose oscillators' phases at time $t$ satisfy
\begin{equation*}
\theta_1(t) \le \theta_2(t) \le \cdots \le \theta_N(t), \quad D_\theta(t) < \gamma < \pi.
\end{equation*}
Then, we have
\begin{equation*}
\sum_{i=2}^N \sum_{\substack{j=1\\ (j,i) \in E}}^{i-1} \sin (\theta_j(t) - \theta_i(t)) \le -\sin D_\theta(t),\quad
\sum_{i=1}^{N-1} \sum_{\substack{j=i+1\\(j,i) \in E}}^N \sin (\theta_j(t) - \theta_i(t)) \ge \sin D_\theta(t).
\end{equation*}
\end{lemma}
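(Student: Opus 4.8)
The plan is to exploit the symmetry of the network, i.e. $\psi_{ij} = \psi_{ji}$, so that the double sum $\sum_{i=2}^N \sum_{j=1,\,(j,i)\in E}^{i-1} \sin(\theta_j(t) - \theta_i(t))$ collects, for each edge, exactly the term with the smaller index first. Concretely, I would rewrite this sum as $\sum_{(j,i)\in E,\ j<i} \sin(\theta_j(t) - \theta_i(t))$ and observe that since $\theta_1(t) \le \cdots \le \theta_N(t)$, every summand has argument in $(-\gamma, 0] \subset (-\pi, 0]$, so each term is nonpositive. The key is then to retain, among all the edges present, one connected structure (a spanning tree, or simply a connecting path) linking the vertex $1$ realizing the minimum phase to the vertex $N$ realizing the maximum phase, and to bound the rest of the negative terms above by $0$.

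The core step is a telescoping/concavity argument along such a connecting path. Since $G$ is connected, there is a path $1 = r_0, r_1, \ldots, r_p = N$ with each consecutive pair an edge of $E$. Along this path, $\sum_{k=0}^{p-1} \bigl(\theta_{r_{k+1}}(t) - \theta_{r_k}(t)\bigr) = \theta_N(t) - \theta_1(t) = D_\theta(t)$. However, the path need not be index-monotone, so some consecutive differences could have either sign; I would instead argue more carefully. A cleaner route: among the edges $(j,i) \in E$ with $j < i$, there must exist at least one edge "crossing" the gap — more precisely, for each threshold $\theta \in (\theta_1(t), \theta_N(t))$, connectedness forces an edge $(j,i)$ with $\theta_j(t) \le \theta < \theta_i(t)$. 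Picking an increasing sequence of such crossing edges whose argument intervals are disjoint, and using that $x \mapsto \sin(x)$ on $(-\gamma,0]$ satisfies $\sin(-a-b) \le \sin(-a) + \sin(-b)$ for $a,b \ge 0$ with $a+b < \gamma < \pi$ (subadditivity of $-\sin$ on this range, which is where $\gamma < \pi$ is used), I would conclude $\sum \sin(\theta_j - \theta_i) \le \sin(\theta_j - \theta_i)\big|_{\text{one crossing edge}} + (\text{rest}) \le -\sin D_\theta(t)$, since $-\sin$ is increasing up to $\pi$ and the crossing differences sum to at least $D_\theta(t)$ while staying below $\gamma$; actually the simplest version just uses one edge connecting the min-cluster to its complement and iterates. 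The second inequality follows by the identical argument with the roles of $i$ and $j$ swapped, or by relabeling $\theta_i \mapsto -\theta_{N+1-i}$.

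The main obstacle I anticipate is handling the non-monotonicity of the connecting path and making the "disjoint crossing intervals" selection rigorous while keeping the total crossed length equal to $D_\theta(t)$ and each partial sum bounded by $\gamma$ — one must ensure the chosen edges' phase-intervals tile $[\theta_1(t), \theta_N(t)]$ without overlap, so that subadditivity of $-\sin$ on $[0,\gamma)$ applies cleanly and the telescoped bound is exactly $-\sin D_\theta(t)$. A convenient device is induction on $N$: split off the vertex (or the maximal index-contiguous block) at one end, use connectedness to find an edge joining it to the rest, apply the $N-1$ case to the remainder, and combine via the subadditivity inequality $\sin(\theta_j - \theta_i) + \sin(\theta_{j'} - \theta_{i'}) \le \sin\bigl((\theta_j - \theta_i) + (\theta_{j'} - \theta_{i'})\bigr)$ valid for nonpositive arguments with sum in $(-\gamma, 0]$. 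The hypothesis $D_\theta(t) < \gamma < \pi$ is exactly what licenses this concavity step.
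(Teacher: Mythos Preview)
Your overall plan---reduce to a single connecting path from vertex $1$ to vertex $N$, drop all other (nonpositive) edge terms, and then telescope using concavity of sine---is exactly the paper's strategy. But two points need repair.

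First, a sign slip: the inequality you write mid-paragraph, ``$\sin(-a-b) \le \sin(-a) + \sin(-b)$'', is false (try $a=b=\pi/4$). What you need, and what you do state correctly at the end, is the reverse: $\sin(-a)+\sin(-b)\le \sin(-a-b)$ for $a,b\ge 0$ with $a+b<\pi$, equivalently the subadditivity $\sin(a+b)\le \sin a+\sin b$ that follows from concavity of $\sin$ on $[0,\pi]$ with $\sin 0=0$.

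Second, and more substantively, neither of your two devices for handling a non-monotone path quite closes. The ``disjoint tiling by crossing edges'' idea asks for edges whose phase-intervals partition $[\theta_1,\theta_N]$ exactly, but connectivity only gives you a \emph{cover}; passing to disjoint pieces in general destroys the property that each piece is an actual edge of $E$, so you cannot drop the sum to just those pieces. The ``induction on $N$ by splitting off an end vertex'' runs into the problem that removing vertex $N$ (or any block) can disconnect the graph, so the inductive hypothesis is unavailable on the remainder. The paper circumvents both issues by doing the induction \emph{along the path} rather than on $N$: with $1=i_0,i_1,\dots,i_p=N$ the path and $\bar n_l=\max_{0\le k\le l}i_k$ the running maximum index, one proves
\[
\sum_{k=1}^{l}\sin\bigl(\theta_{\min\{i_{k-1},i_k\}}-\theta_{\max\{i_{k-1},i_k\}}\bigr)\ \le\ \sin\bigl(\theta_1-\theta_{\bar n_l}\bigr)
\]
by induction on $l$. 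When the new step increases $\bar n_l$, the concavity inequality combines the two sines; when it does not, the new term is simply $\le 0$. This works cleanly only while $\theta_{\bar n_l}-\theta_1\le \pi/2$ (so that $\sin$ is monotone where needed), which is why the paper splits into the case $D_\theta(t)\le \pi/2$ and the case $\pi/2<D_\theta(t)<\pi$; in the latter one runs the induction up to the first $l$ with $\theta_{\bar n_l}-\theta_1>\pi/2$ and then uses that $\sin$ is \emph{decreasing} on $[\pi/2,\pi]$ to finish. Your sketch would benefit from this running-maximum bookkeeping and the $\pi/2$ case split.
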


As the proof of Lemma \ref{phs_subadd} is rather lengthy, we will put it in the Appendix section. Now, as the solution to system \eqref{s_phs} is analytic, we can express the time line into the union of countably many intervals as follows,
\begin{equation}\label{split-time}
[0,+\infty)=\bigcup_{l=1}^{+\infty} I_{l},\quad I_{l}=[t_{l-1},t_l),\quad t_0=0,
\end{equation}
where, in each $I_l$, the orders of oscillators' phases, frequencies, accelerations and jerks are unchanged.
Then, in each $I_l$, we can apply the key estimates in Lemma \ref{phs_subadd} to obtain a second-order differential inequality of $Q(t)$. 

\begin{lemma}\label{s_Q_dynamic}
Let $(\theta(t), \omega(t))$ be a solution to system \eqref{s_phs}, and suppose Assumption $(\mathcal{H}_2)$ holds and 
\begin{equation*}
D_\theta(t) < \gamma < \pi, \quad \text{for} \ t \in I_l,
\end{equation*}
where $I_l$ is some time interval defined in \eqref{split-time}. Then, we have
\begin{equation*}
m\ddot{Q}(t) + \dot{Q}(t) \le D(\Omega) + 2K\psi_u \sin \alpha - \frac{2K\psi _l \cos \alpha \sin \gamma}{N(\sum_{n=0}^{N-1} c^n)\gamma} Q(t), \quad t \in I_l.
\end{equation*}
\end{lemma}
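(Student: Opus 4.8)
The plan is to freeze one of the intervals $I_l$ from the decomposition \eqref{split-time}, on which the relative order of the phases does not change; after relabelling we may assume $\theta_1(t)\le\cdots\le\theta_N(t)$ on $I_l$. Then, with $S:=\sum_{n=0}^{N-1}c^n$, the function $Q$ is the linear combination $Q(t)=\frac{1}{S}\sum_{i=1}^N(c^{i-1}-c^{N-i})\theta_i(t)$ with \emph{constant} coefficients on $I_l$, so that, since each $\theta_i$ solves \eqref{s_phs},
\[
m\ddot Q(t)+\dot Q(t)=\frac{1}{S}\sum_{i=1}^N(c^{i-1}-c^{N-i})\Omega_i+\frac{K}{NS}\sum_{i=1}^N(c^{i-1}-c^{N-i})\sum_{j=1}^N\psi_{ij}\sin(\theta_j-\theta_i+\alpha).
\]
The coefficients $c^{i-1}-c^{N-i}$ sum to zero, so setting $P:=\sum_{i:\,c^{i-1}\ge c^{N-i}}(c^{i-1}-c^{N-i})$ we have $0\le P\le S$, and the natural-frequency term is at most $\frac{P}{S}D(\Omega)\le D(\Omega)$. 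Expanding $\sin(\theta_j-\theta_i+\alpha)=\cos\alpha\,\sin(\theta_j-\theta_i)+\sin\alpha\,\cos(\theta_j-\theta_i)$ and using $\bigl|\sum_j\psi_{ij}\cos(\theta_j-\theta_i)\bigr|\le N\psi_u$ with the same zero-sum bound, the $\sin\alpha$ part of the coupling is at most $2K\psi_u\sin\alpha$. Hence everything reduces to bounding the $\cos\alpha$ part by $-\frac{2K\psi_l\cos\alpha\sin\gamma}{NS\gamma}Q(t)$.

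The core is therefore to estimate $\sum_{i=1}^N(c^{i-1}-c^{N-i})d_i$, where $d_i:=\sum_{j=1}^N\psi_{ij}\sin(\theta_j-\theta_i)$. By symmetry of $\Psi$ and antisymmetry of $\sin$, the partial sums $D_k:=\sum_{i=1}^k d_i$ satisfy $D_0=D_N=0$ and, for $1\le k\le N-1$, $D_k=\sum_{i\le k<j}\psi_{ij}\sin(\theta_j-\theta_i)\ge0$ (the order is frozen and $0\le\theta_j-\theta_i\le D_\theta<\gamma<\pi$). Abel's summation by parts then gives the exact identity
\[
\sum_{i=1}^N(c^{i-1}-c^{N-i})d_i=-(c-1)\sum_{k=1}^{N-1}\bigl(c^{k-1}+c^{N-1-k}\bigr)D_k .
\]
Next I would bound each cut-tension $D_k$ from below: since the network is connected, at least one edge $(i,j)$ joins $\{1,\dots,k\}$ to $\{k+1,\dots,N\}$, and because $\sin$ is concave on $[0,\gamma]$ we get $\psi_{ij}\sin(\theta_j-\theta_i)\ge\psi_l\frac{\sin\gamma}{\gamma}(\theta_j-\theta_i)\ge\psi_l\frac{\sin\gamma}{\gamma}(\theta_{k+1}-\theta_k)$, while all remaining terms of $D_k$ are nonnegative; hence $D_k\ge\psi_l\frac{\sin\gamma}{\gamma}(\theta_{k+1}-\theta_k)$. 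Since $c>2$ forces $(c-1)\bigl(c^{k-1}+c^{N-1-k}\bigr)\ge2$ for every $1\le k\le N-1$, the identity above yields
\[
\sum_{i=1}^N(c^{i-1}-c^{N-i})d_i\le-\frac{2\psi_l\sin\gamma}{\gamma}\sum_{k=1}^{N-1}(\theta_{k+1}-\theta_k)=-\frac{2\psi_l\sin\gamma}{\gamma}D_\theta(t)\le-\frac{2\psi_l\sin\gamma}{\gamma}Q(t),
\]
where the last inequality is $Q\le D_\theta$ from \eqref{phs_size}. Multiplying by $\frac{K\cos\alpha}{NS}>0$ and adding the two constant bounds from the first paragraph gives precisely the asserted second-order inequality on $I_l$.

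I expect the genuine crux to be the Abel-summation identity combined with the per-cut estimate $D_k\ge\psi_l\frac{\sin\gamma}{\gamma}(\theta_{k+1}-\theta_k)$: it is exactly this pairing that makes the weighted combination $\sum_k(c^{k-1}+c^{N-1-k})D_k$ telescope down to $D_\theta$ (and hence dominate $Q$), and it is here that the symmetry and connectivity of the network — rather than an all-to-all coupling — actually enter. The remaining pieces are routine: the zero-sum cancellations handling the $\Omega$- and $\sin\alpha$-terms, the concavity estimate $\sin x\ge\frac{\sin\gamma}{\gamma}x$ on $[0,\gamma]$, and the elementary bound $(c-1)(c^{k-1}+c^{N-1-k})\ge2$ for $c>2$. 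One could also route the cut estimate through Lemma~\ref{phs_subadd} rather than invoking connectivity of each cut directly, but the above seems the most economical.
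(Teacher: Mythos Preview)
Your proof is correct and takes a genuinely different route from the paper's. The paper treats $\bar\theta$ and $\underline\theta$ separately: for each it symmetrizes the coupling sum to $\sum_{i>j}(c^{i-1}-c^{j-1})\psi_{ij}\sin(\theta_j-\theta_i)$, uses $c^{i-1}-c^{j-1}>1$ to strip the weights, and then invokes the path-based Lemma~\ref{phs_subadd} (an induction along a path from vertex $1$ to vertex $N$, with a case split on whether $D_\theta\le\pi/2$) to bound the residual edge sum by $-\sin D_\theta$; only afterwards does it apply $\sin D_\theta\ge\frac{\sin\gamma}{\gamma}D_\theta\ge\frac{\sin\gamma}{\gamma}Q$. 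You instead work with the single combination $\sum_i(c^{i-1}-c^{N-i})d_i$, perform Abel summation to rewrite it in terms of the cut-tensions $D_k=\sum_{i\le k<j}\psi_{ij}\sin(\theta_j-\theta_i)$, and bound each $D_k$ from below by $\psi_l\frac{\sin\gamma}{\gamma}(\theta_{k+1}-\theta_k)$ using only that the connected graph has at least one edge across every cut. The telescoping then delivers $D_\theta$ directly. Your approach bypasses Lemma~\ref{phs_subadd} entirely and makes the role of connectivity more transparent (one edge per cut rather than a single path), at the cost of not isolating a reusable ``sub-additivity'' lemma; the paper's organization has the advantage that the analogous frequency estimate (Lemma~\ref{fre_subadd}) is then a verbatim reuse of the same lemma with $\sin$ replaced by the identity.
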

\begin{remark}\label{rm3.1}
As $t \in I_l$, the order of oscillators' phases is unchanged. Then, the second order derivative of $Q(t)$ is well defined. However, at the end point of $I_l$, the second derivative of $Q(t)$ may produce delta function. Therefore, the above second order differential inequality is not enough to prove the dissipation of $Q(t)$.	
\end{remark}

\begin{proof}
For $t \in I_l$, there exists some permutation $l_1l_2\cdots l_N$ of $\{1,2,\ldots,N\}$ such that oscillators' phases at time $t$ are well-ordered as follows
\begin{equation}\label{random_order12}
\theta_{l_1}(t) \le \theta_{l_2}(t) \le \cdots \le \theta_{l_N}(t).
\end{equation}
As a matter of convenience and without loss of generality, in \eqref{random_order12}, we assume $l_i = i$, i.e., 
\begin{equation}\label{regular_order1}
\theta_1(t) \le \theta_2(t) \le \cdots \le \theta_N(t), 
\end{equation}
if necessary, we still adopt \eqref{random_order12}. Then, based on the definition of $Q(t)$ in \eqref{Q_function}, we have
\begin{equation}\label{Q_definition}
Q(t) = \bar{\theta}(t) - \underline{\theta}(t), \quad \bar{\theta}(t) = \frac{1}{\sum_{n=0}^{N-1} c^n} \sum_{i=1}^N c^{i-1} \theta_{i}(t), \quad \underline{\theta}(t) = \frac{1}{\sum_{n=0}^{N-1} c^n} \sum_{i=1}^{N} c^{N-i} \theta_{i}(t).
\end{equation}
From \eqref{Q_definition}, we see that
\begin{equation}\label{H-1}
m\ddot{Q}(t) + \dot{Q}(t) = m (\ddot{\bar{\theta}}(t) - \ddot{\underline{\theta}}(t)) + (\dot{\bar{\theta}}(t) - \dot{\underline{\theta}}(t)) = m \ddot{\bar{\theta}}(t) + \dot{\bar{\theta}}(t) - (m\ddot{\underline{\theta}}(t) + \dot{\underline{\theta}}(t)). 
\end{equation}
In the sequel, we divide the proof into three steps.

\noindent $\bullet$ {\bf Step 1:}
We first consider the dynamics of $\bar{\theta}(t)$. It is clear to see from \eqref{s_phs} and \eqref{Q_definition} that
\begin{equation*}
\begin{aligned}
m \ddot{\bar{\theta}}(t) + \dot{\bar{\theta}}(t) 
%&= m  \frac{1}{\sum_{n=0}^{N-1} c^n} \sum_{i=1}^N c^{i-1} \ddot{\theta}_{i}(t) + \frac{1}{\sum_{n=0}^{N-1} c^n} \sum_{i=1}^N c^{i-1} \dot{\theta}_{i}(t)\\
&=\frac{1}{\sum_{n=0}^{N-1} c^n} \sum_{i=1}^N c^{i-1}(m \ddot{\theta}_i(t) + \dot{\theta}_i(t)) \\
&= \frac{1}{\sum_{n=0}^{N-1} c^n} \sum_{i=1}^N c^{i-1} \left( \Omega_i + \frac{K}{N} \sum_{j=1}^N \psi_{ij} \sin (\theta_j(t) - \theta_i(t) + \alpha)\right)\\
%&= \frac{1}{\sum_{n=0}^{N-1} c^n} \sum_{i=1}^N c^{i-1} \Omega_i  + \frac{K}{N(\sum_{n=0}^{N-1} c^n)} \sum_{i=1}^N c^{i-1} \sum_{j=1}^N \psi_{ij} \sin (\theta_j(t) - \theta_i(t) + \alpha)\\
&=  \frac{1}{\sum_{n=0}^{N-1} c^n} \sum_{i=1}^N c^{i-1} \Omega_i  \\
&\quad + \frac{K}{N(\sum_{n=0}^{N-1} c^n)} \sum_{i=1}^N c^{i-1} \sum_{j=1}^N \psi_{ij} [\sin (\theta_j(t) - \theta_i(t)) \cos \alpha + \cos (\theta_j(t) - \theta_i(t)) \sin \alpha].
\end{aligned}
\end{equation*}
This yields that 
\begin{equation*}
\begin{aligned}
m \ddot{\bar{\theta}}(t) + \dot{\bar{\theta}}(t) &\le \Omega_M + K \psi_u \sin \alpha + \frac{K\cos \alpha}{N(\sum_{n=0}^{N-1} c^n)} \sum_{i=1}^N c^{i-1} \sum_{j=1}^N \psi_{ij} \sin (\theta_j(t) - \theta_i(t))\\
&\le \Omega_M + K \psi_u \sin \alpha + \frac{K\cos \alpha}{N(\sum_{n=0}^{N-1} c^n)}  \sum_{i=2}^N \sum_{j=1}^{i-1} (c^{i-1} - c^{j-1}) \psi_{ij} \sin (\theta_j(t) - \theta_i(t))\\
&\le \Omega_M + K \psi_u \sin \alpha + \frac{K\cos \alpha}{N(\sum_{n=0}^{N-1} c^n)}  \sum_{i=2}^N \sum_{j=1}^{i-1} \psi_{ij} \sin (\theta_j(t) - \theta_i(t))\\
%&=\Omega_M + K \psi_u \sin \alpha + \frac{K\cos \alpha}{N(\sum_{n=0}^{N-1} c^n)}  \sum_{i=2}^N \sum_{\substack{j=1\\ (j,i) \in E}}^{i-1} \psi_{ij} \sin (\theta_j(t) - \theta_i(t))\\
& \le \Omega_M + K \psi_u \sin \alpha + \frac{K\psi _l \cos \alpha}{N(\sum_{n=0}^{N-1} c^n)}  \sum_{i=2}^N \sum_{\substack{j=1\\ (j,i) \in E}}^{i-1} \sin (\theta_j(t) - \theta_i(t)).
\end{aligned}
\end{equation*}
Here, we applied $\psi_{ij} = \psi_{ji}$ and the constraint on $c$ in Assumption $(\mathcal{H}_2)$ yielding
\begin{equation}\label{H-1a}
\begin{aligned}
&\sum_{i=1}^N c^{i-1} \sum_{j=1}^N \psi_{ij} \sin (\theta_j(t) - \theta_i(t))\\
&=\sum_{i=1}^N c^{i-1} \sum_{\substack{j=1\\j < i}}^N \psi_{ij} \sin (\theta_j(t) - \theta_i(t)) + \sum_{i=1}^N c^{i-1} \sum_{\substack{j=1\\j > i}}^N \psi_{ij} \sin (\theta_j(t) - \theta_i(t))\\
&=\sum_{i=1}^N c^{i-1} \sum_{\substack{j=1\\j < i}}^N \psi_{ij} \sin (\theta_j(t) - \theta_i(t)) +  \sum_{j=1}^N c^{j-1} \sum_{\substack{i=1\\i > j}}^N \psi_{ji} \sin (\theta_i(t) - \theta_j(t))\\
&= \sum_{i=1}^N c^{i-1} \sum_{\substack{j=1\\j < i}}^N \psi_{ij} \sin (\theta_j(t) - \theta_i(t)) -  \sum_{j=1}^N c^{j-1} \sum_{\substack{i=1\\i > j}}^N \psi_{ji} \sin (\theta_j(t) - \theta_i(t))\\
&= \sum_{i=2}^N \sum_{j=1}^{i-1} (c^{i-1} - c^{j-1}) \psi_{ij} \sin (\theta_j(t) - \theta_i(t)),
\end{aligned}
\end{equation}
and 
\begin{equation}\label{H-1b}
i - j \ge 1, \quad c^{i-1} - c^{j-1} = c^{j-1} (c^{i-j} - 1) > 1, \quad \sin (\theta_j(t) - \theta_i(t)) \le 0, \ \text{for} \ j < i.
\end{equation}
Moreover, owing to Lemma \ref{phs_subadd}, we find
\begin{equation}\label{H-2}
\begin{aligned}
m \ddot{\bar{\theta}}(t) + \dot{\bar{\theta}}(t) 
%&\le \Omega_M + K \psi_u \sin \alpha + \frac{K\psi _l \cos \alpha}{N(\sum_{n=0}^{N-1} c^n)} \sin (\theta_1(t) - \theta_N(t))\\
&\le\Omega_M + K \psi_u \sin \alpha - \frac{K\psi _l \cos \alpha}{N(\sum_{n=0}^{N-1} c^n)} \sin D_\theta(t). 
\end{aligned}
\end{equation}

\noindent $\bullet$ {\bf Step 2:} Next, by similar argument, we can prove the following estimate of the dynamics of $\underline{\theta}(t)$,
\begin{equation}\label{H-3}
\begin{aligned}
m\ddot{\underline{\theta}}(t) + \dot{\underline{\theta}}(t) 
%&\ge \Omega_m - K \psi_u \sin \alpha + \frac{K\psi_l\cos \alpha}{N(\sum_{n=0}^{N-1} c^n)} \sin (\theta_N(t) - \theta_1(t)) \\
&\ge \Omega_m - K \psi_u \sin \alpha + \frac{K\psi_l\cos \alpha}{N(\sum_{n=0}^{N-1} c^n)} \sin D_\theta(t).
\end{aligned}
\end{equation}

\noindent $\bullet$ {\bf Step 3:} Now, we collect \eqref{H-1}, \eqref{H-2} and \eqref{H-3} to obtain
\begin{equation*}
\begin{aligned}
m\ddot{Q}(t) + \dot{Q}(t) 
%& \le \Omega_M + K \psi_u \sin \alpha - \frac{K\psi _l \cos \alpha}{N(\sum_{n=0}^{N-1} c^n)} \sin D_\theta(t)\\
%&\quad - \left( \Omega_m - K \psi_u \sin \alpha + \frac{K\psi_l\cos \alpha}{N(\sum_{n=0}^{N-1} c^n)} \sin D_\theta(t)\right)\\
&= D(\Omega) + 2K\psi_u \sin \alpha - \frac{2K\psi _l \cos \alpha}{N(\sum_{n=0}^{N-1} c^n)} \sin D_\theta(t) \\
&\le D(\Omega) + 2K\psi_u \sin \alpha - \frac{2K\psi _l \cos \alpha}{N(\sum_{n=0}^{N-1} c^n)} \frac{\sin \gamma}{\gamma} D_\theta(t)\\
&\le D(\Omega) + 2K\psi_u \sin \alpha - \frac{2K\psi _l \cos \alpha \sin \gamma}{N(\sum_{n=0}^{N-1} c^n)\gamma} Q(t), \quad t \in I_l,
\end{aligned}
\end{equation*}
Here, we applied the decreasing property of $\frac{\sin x}{x}, x \in (0,\pi]$ and the relation \eqref{phs_size}.
%Therefore, we derive the desired result.
\end{proof}

As mentioned in Remark \ref{rm3.1}, the above estimate of $Q(t)$ in Lemma \ref{s_Q_dynamic} is insufficient. The main difficulty comes from the second order derivative of $Q(t)$, which in fact is a combination of the accelerations $\ddot{\theta}_i$. Therefore, in the sequel, we present a rough estimate on the dynamics of $A(t)$, which is equivalent to the diameter of the accelerations, see defined in \eqref{A_function}. For this, we directly differentiate \eqref{s_phs} with respect to time $t$ to obtain
\begin{equation}\label{s_fre}
\begin{aligned}
m\ddot{\omega}_i(t) + \dot{\omega}_i(t) = \frac{K}{N} \sum_{j=1}^N \psi_{ij} \cos (\theta_j(t) - \theta_i(t) + \alpha) (\omega_j(t) - \omega_i(t)).
\end{aligned}
\end{equation}
Recall $a_i(t) = \dot{\omega}_i(t)$, and thus it follows from \eqref{s_fre} that
\begin{equation}\label{f_acce}
\begin{aligned}
m \dot{a}_i(t) + a_i(t) = \frac{K}{N} \sum_{j=1}^N \psi_{ij} \cos (\theta_j(t) - \theta_i(t) + \alpha) (\omega_j(t) - \omega_i(t)).
\end{aligned}
\end{equation}

\begin{lemma}\label{f_A_dynamic}
Let $(\theta(t),\omega(t))$ be a solution to system \eqref{s_phs}. Then, we have
\begin{equation}\label{f_A_eq}
m \dot{A}(t) + A(t) \le 2K \psi_u \frac{1}{\eta} F(t), \quad t \in I_l, \ l=1,2,\cdots,
\end{equation}
where $I_l$ is defined in \eqref{split-time}.
\end{lemma}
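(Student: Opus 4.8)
The plan is to mimic the structure of the proof of Lemma \ref{s_Q_dynamic}, but now working with the first-order dynamics \eqref{f_acce} for the accelerations $a_i(t)$ rather than the second-order dynamics for $\theta_i(t)$. Fix $t \in I_l$. Since the order of the accelerations is unchanged on $I_l$, there is a permutation $p_1 p_2 \cdots p_N$ with $a_{p_1}(t) \le \cdots \le a_{p_N}(t)$; without loss of generality relabel so that $a_1(t) \le a_2(t) \le \cdots \le a_N(t)$. Then $A(t) = \bar a(t) - \underline a(t)$ with $\bar a(t) = \frac{1}{\sum_{n=0}^{N-1}c^n}\sum_{i=1}^N c^{i-1} a_i(t)$ and $\underline a(t) = \frac{1}{\sum_{n=0}^{N-1}c^n}\sum_{i=1}^N c^{N-i} a_i(t)$, and differentiating the defining convex combination gives, on the interior of $I_l$,
\begin{equation*}
m\dot A(t) + A(t) = \left(m\dot{\bar a}(t) + \bar a(t)\right) - \left(m\dot{\underline a}(t) + \underline a(t)\right).
\end{equation*}

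First I would estimate $m\dot{\bar a}(t) + \bar a(t)$. Taking the same convex combination of \eqref{f_acce} yields
\begin{equation*}
m\dot{\bar a}(t) + \bar a(t) = \frac{K}{N\left(\sum_{n=0}^{N-1}c^n\right)} \sum_{i=1}^N c^{i-1}\sum_{j=1}^N \psi_{ij}\cos(\theta_j(t)-\theta_i(t)+\alpha)\,(\omega_j(t)-\omega_i(t)).
\end{equation*}
The naive bound is obtained by dropping the cosine factor in absolute value ($|\cos(\cdot)|\le 1$) and using $\psi_{ij}\le\psi_u$, $|\omega_j(t)-\omega_i(t)|\le D_\omega(t)$, and $\sum_{i=1}^N c^{i-1}\cdot(\text{one sum of }N\text{ weights})$; more carefully, one uses $\frac{1}{\sum_{n=0}^{N-1}c^n}\sum_{i=1}^N c^{i-1}\le 1$ and the fact that for each fixed $i$ the number of active neighbors $j$ contributes $\psi_{ij}\le\psi_u$, giving something of order $K\psi_u D_\omega(t)$ after summing. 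The symmetric lower bound for $m\dot{\underline a}(t) + \underline a(t)$ works identically, and subtracting produces $m\dot A(t) + A(t) \le 2K\psi_u D_\omega(t)$. Finally, invoke the equivalence \eqref{fre_size}, namely $D_\omega(t)\le \eta^{-1}F(t)$, to replace $D_\omega(t)$ by $\eta^{-1}F(t)$ and arrive at \eqref{f_A_eq}. The endpoint issue flagged in Remark \ref{rm3.1} is harmless here: the inequality holds on the open interval $I_l$, and since $A$ is Lipschitz and $A,F$ are continuous, it extends to the closure, so the stated conclusion on $I_l$ is fine.

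The only genuinely delicate point is the combinatorial bookkeeping in the double-sum estimate: one must keep track of how the weights $c^{i-1}$ interact with $\psi_{ij}$ and $(\omega_j-\omega_i)$, and confirm that the constant that emerges is exactly $2K\psi_u$ and not something larger. Unlike in Lemma \ref{s_Q_dynamic}, we do not need the antisymmetrization trick \eqref{H-1a} or Lemma \ref{phs_subadd} here, because this is only a \emph{rough} (one-sided, dissipation-free in the leading term) estimate — we are not trying to extract a coercive $-(\text{const})A(t)$ term, merely to bound the forcing by $F(t)$. So the proof is short: convex-combine \eqref{f_acce}, bound crudely, and apply \eqref{fre_size}. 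I expect the main (very mild) obstacle to be simply presenting the weighted double-sum bound cleanly enough that the factor $2K\psi_u/\eta$ is transparent.
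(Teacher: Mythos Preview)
Your proposal is correct and follows essentially the same route as the paper: convex-combine \eqref{f_acce} with the $c^{i-1}$ and $c^{N-i}$ weights to get $m\dot{\bar a}+\bar a \le K\psi_u D_\omega(t)$ and $m\dot{\underline a}+\underline a \ge -K\psi_u D_\omega(t)$ via the crude bounds $|\cos(\cdot)|\le 1$, $\psi_{ij}\le\psi_u$, $|\omega_j-\omega_i|\le D_\omega(t)$, subtract, and apply \eqref{fre_size}. The ``delicate combinatorial bookkeeping'' you flag is in fact immediate here---the outer sum is a genuine convex combination and the inner sum over $j$ has $N$ terms each bounded by $\tfrac{K}{N}\psi_u D_\omega(t)$---so the constant $K\psi_u$ (hence $2K\psi_u$ after subtraction) drops out directly with no interaction between the $c^{i-1}$ weights and the coupling terms.
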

\begin{proof}
For any fixed time interval $I_l$ and $t \in I_l$ with $l=1,2,\cdots$, there exists some permutation $p_1p_2\cdots p_N$ such that oscillators' accelerations at time $t$ are in the following ordered manner
\begin{equation}\label{random_order42}
a_{p_1}(t) \le a_{p_2}(t) \le \cdots \le a_{p_N}(t).
\end{equation}
Note that the permutation $p_1p_2\cdots p_N$ here may be different from that $l_1l_2\cdots l_N$ in \eqref{random_order12} at the same instant.
Similarly, for the sake of discussion and without loss of generality, in \eqref{random_order42}, we assume $p_i = i$, i.e.,
\begin{equation}\label{regular_order4}
a_1(t) \le a_2 (t) \le \cdots \le a_N(t),
\end{equation}
if necessary, we still adopt \eqref{random_order42}.
Then based on \eqref{regular_order4} and the construction of $A(t)$ in \eqref{A_function}, we have
\begin{equation}\label{A_definition}
A(t) = \bar{a}(t) - \underline{a}(t), \quad \bar{a}(t) = \frac{1}{\sum_{n=0}^{N-1} c^n} \sum_{i=1}^N c^{i-1} a_{i}(t), \quad \underline{a}(t) = \frac{1}{\sum_{n=0}^{N-1} c^n} \sum_{i=1}^{N} c^{N-i} a_{i}(t).
\end{equation}
It follows from \eqref{A_definition} that
\begin{equation}\label{K-1}
\begin{aligned}
m\dot{A}(t) + A(t) &= m (\dot{\bar{a}}(t) - \dot{\underline{a}}(t)) + (\bar{a}(t) - \underline{a}(t))= (m\dot{\bar{a}}(t) + \bar{a}(t)) - (m\dot{\underline{a}}(t) + \underline{a}(t)).
\end{aligned}
\end{equation}
We first estimate the term $m\dot{\bar{a}}(t) + \bar{a}(t)$ in \eqref{K-1},
\begin{equation}\label{K-2}
\begin{aligned}
m\dot{\bar{a}}(t) + \bar{a}(t) 
%&= m \frac{1}{\sum_{n=0}^{N-1} c^n} \sum_{i=1}^N c^{i-1} \dot{a}_{i}(t)  + \frac{1}{\sum_{n=0}^{N-1} c^n} \sum_{i=1}^N c^{i-1} a_{i}(t)\\
&= \frac{1}{\sum_{n=0}^{N-1} c^n} \sum_{i=1}^N c^{i-1} (m \dot{a}_i(t) + a_i(t))\\
&=\frac{1}{\sum_{n=0}^{N-1} c^n} \sum_{i=1}^N c^{i-1} \left[ \frac{K}{N} \sum_{j=1}^N \psi_{ij} \cos (\theta_j(t) - \theta_i(t) + \alpha) (\omega_j(t) - \omega_i(t)) \right]\\
&\le K \psi_u D_\omega(t).
\end{aligned}
\end{equation}
%This together with Lemma \ref{size} yields that
%\begin{equation}
%\begin{aligned}
%m\dot{\bar{a}}(t) + \bar{a}(t) \le K \psi_u \frac{1}{\eta} F(t).
%\end{aligned}
%\end{equation}
Furthermore, we can apply the similar argument as in \eqref{K-2} to find
%deal with the term $m\dot{\underline{a}}(t) + \underline{a}(t)$ in \eqref{K-1},
\begin{equation}\label{K-3}
\begin{aligned}
m\dot{\underline{a}}(t) + \underline{a}(t) 
%&= m \frac{1}{\sum_{n=0}^{N-1} c^n} \sum_{i=1}^{N} c^{N-i} \dot{a}_{i}(t)+ \frac{1}{\sum_{n=0}^{N-1} c^n} \sum_{i=1}^{N} c^{N-i} a_{i}(t)\\
%&=\frac{1}{\sum_{n=0}^{N-1} c^n} \sum_{i=1}^{N} c^{N-i}(m\dot{a}_i(t) + a_i(t))\\
%&=\frac{1}{\sum_{n=0}^{N-1} c^n} \sum_{i=1}^{N} c^{N-i} \left[ \frac{K}{N} \sum_{j=1}^N \psi_{ij} \cos (\theta_j(t) - \theta_i(t) + \alpha) (\omega_j(t) - \omega_i(t))\right]\\
&\ge - K \psi_u D_\omega(t).
\end{aligned}
\end{equation}
Then, we collect \eqref{K-1}, \eqref{K-2}, \eqref{K-3} and \eqref{fre_size} to obtain
\begin{equation*}
\begin{aligned}
m\dot{A}(t) + A(t) 
%&= (m\dot{\bar{a}}(t) + \bar{a}(t)) - (m\dot{\underline{a}}(t) + \underline{a}(t))\\
&\le 2K \psi_u D_\omega(t) \le 2K \psi_u \frac{1}{\eta}F(t), \quad t \in I_l.
\end{aligned}
\end{equation*}
This derives the desired result.
\end{proof}

Now, we may use the damping term in the differential inequality of $A(t)$ to control the second order derivative of $Q(t)$. However, the dynamics of $Q(t)$ and $A(t)$ are still not dissipative, since there is an additional term $F(t)$, which is equivalent to the diameter of the frequencies. This pushes us to study the estimate on the dynamics of $F(t)$ defined in \eqref{F_function}.
Recall $\omega_i(t) = \dot{\theta}_i(t)$, then it follows from \eqref{s_phs} that
\begin{equation}\label{f_fre2} 
m\dot{\omega}_i(t) + \omega_i(t) = \Omega_i + \frac{K}{N} \sum_{j=1}^N \psi_{ij} \sin (\theta_j(t) - \theta_i(t) + \alpha), \quad t > 0.
\end{equation}

\begin{lemma}\label{f_F_dynamic}
Let $(\theta(t),\omega(t))$ be a solution to system \eqref{s_phs}. Then, we have
\begin{equation*}
m \dot{F}(t) + F(t) \le D(\Omega) + 2K \psi_u \sin \alpha + 2K \psi_u \cos \alpha \frac{1}{\eta} Q(t), \quad t \in I_l, \ l=1,2,\cdots,
\end{equation*}
where $I_l$ is defined in \eqref{split-time}.\end{lemma}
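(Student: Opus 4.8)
\emph{Proof proposal.} The plan is to imitate the derivations of Lemma~\ref{s_Q_dynamic} and Lemma~\ref{f_A_dynamic}, working on one interval $I_l$ on which the order of the frequencies is frozen. On $I_l$ I would relabel so that, without loss of generality, $\omega_1(t)\le\cdots\le\omega_N(t)$; then by \eqref{F_function},
\[
F(t)=\bar\omega(t)-\underline\omega(t),\quad \bar\omega(t)=\frac{1}{\sum_{n=0}^{N-1}c^n}\sum_{i=1}^N c^{i-1}\omega_i(t),\quad \underline\omega(t)=\frac{1}{\sum_{n=0}^{N-1}c^n}\sum_{i=1}^N c^{N-i}\omega_i(t),
\]
and both $\bar\omega,\underline\omega$ are differentiable on the open interval, so that $m\dot F(t)+F(t)=(m\dot{\bar\omega}(t)+\bar\omega(t))-(m\dot{\underline\omega}(t)+\underline\omega(t))$; it then suffices to estimate the two pieces separately.

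For the first piece I would substitute \eqref{f_fre2} to write $m\dot{\bar\omega}+\bar\omega=\frac{1}{\sum_{n=0}^{N-1}c^n}\sum_i c^{i-1}\Omega_i+\frac{K}{N\sum_{n=0}^{N-1}c^n}\sum_i c^{i-1}\sum_j\psi_{ij}\sin(\theta_j-\theta_i+\alpha)$, expand $\sin(\theta_j-\theta_i+\alpha)=\sin(\theta_j-\theta_i)\cos\alpha+\cos(\theta_j-\theta_i)\sin\alpha$, and bound crudely using $\sum_{i=1}^N c^{i-1}=\sum_{n=0}^{N-1}c^n$, $\Omega_i\le\Omega_M$, $\psi_{ij}\le\psi_u$, $|\cos(\theta_j-\theta_i)|\le1$ and $|\sin(\theta_j-\theta_i)|\le|\theta_j-\theta_i|\le D_\theta(t)$. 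This gives $m\dot{\bar\omega}(t)+\bar\omega(t)\le\Omega_M+K\psi_u\sin\alpha+K\psi_u\cos\alpha\, D_\theta(t)$; the mirror computation for $\underline\omega$ (noting $\sum_{i=1}^N c^{N-i}=\sum_{n=0}^{N-1}c^n$) yields $m\dot{\underline\omega}(t)+\underline\omega(t)\ge\Omega_m-K\psi_u\sin\alpha-K\psi_u\cos\alpha\, D_\theta(t)$. Subtracting the two and then using $D_\theta(t)\le\eta^{-1}Q(t)$ from \eqref{phs_size} gives precisely $m\dot F(t)+F(t)\le D(\Omega)+2K\psi_u\sin\alpha+2K\psi_u\cos\alpha\,\eta^{-1}Q(t)$.

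The one point worth flagging — and the reason this estimate is only a \emph{rough} one, producing no dissipation in $F$ itself — is that, unlike in Lemma~\ref{s_Q_dynamic}, the permutation that orders the frequencies need not order the phases, so Lemma~\ref{phs_subadd} is unavailable and the coupling term cannot be converted into a negative multiple of $\sin D_\theta(t)$. Hence I expect no genuine obstacle here: one simply discards the (possibly favourable) sign of $\sin(\theta_j-\theta_i)$, absorbing the entire coupling into a $Q(t)$-forcing term, with the only damping coming from the $+F(t)$ on the left-hand side; the dissipative structure of the whole system will emerge later only when this inequality is combined with those for $Q$, $A$ and $B$ inside $\mathcal{E}_1,\mathcal{E}_2$. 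The sole thing to be careful about is the bookkeeping of the weight normalization $\sum_{i=1}^N c^{i-1}=\sum_{i=1}^N c^{N-i}=\sum_{n=0}^{N-1}c^n$, so that no spurious constants survive.
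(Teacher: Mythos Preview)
Your proposal is correct and follows essentially the same argument as the paper: order the frequencies on $I_l$, write $m\dot F+F=(m\dot{\bar\omega}+\bar\omega)-(m\dot{\underline\omega}+\underline\omega)$, substitute \eqref{f_fre2}, expand $\sin(\theta_j-\theta_i+\alpha)$, bound crudely by $|\sin x|\le|x|$ and $|\cos x|\le1$, and finish with $D_\theta(t)\le\eta^{-1}Q(t)$ from \eqref{phs_size}. Your remark that Lemma~\ref{phs_subadd} is unavailable here (because the frequency ordering need not match the phase ordering) is exactly the reason the paper, too, settles for this rough estimate.
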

\begin{proof}
For any fixed time interval $I_l$ and $t \in I_l$ with $l=1,2,\cdots$, we can find one permutation $k_1k_2\cdots k_N$ of \{1,2,\ldots,N\} such that oscillators' frequencies at time $t$ are well-ordering as below
\begin{equation}\label{random_order32}
\omega_{k_1}(t) \le \omega_{k_2}(t) \le \cdots \le \omega_{k_N}(t).
\end{equation}
Note that the permutation $k_1k_2\cdots k_N$ here may differ from  the permutation $l_1l_2\cdots l_N$ in \eqref{random_order12} and $p_1p_2\cdots p_N$ in \eqref{random_order42} at the same instant.
For the sake of discussion, in \eqref{random_order32}, we assume $k_i = i$ without loss of generality, i.e.,
\begin{equation}\label{regular_order3}
\omega_1(t) \le \omega_2 (t) \le \cdots \le \omega_N(t),
\end{equation}
if necessary, we still adopt \eqref{random_order32}. 
Then based on \eqref{regular_order3} and the construction of $F(t)$ in \eqref{F_function}, we have
\begin{equation}\label{F_definition}
F(t) = \bar{\omega}(t) - \underline{\omega}(t), \quad \bar{\omega}(t) = \frac{1}{\sum_{n=0}^{N-1} c^n} \sum_{i=1}^N c^{i-1} \omega_{i}(t), \quad \underline{\omega}(t) = \frac{1}{\sum_{n=0}^{N-1} c^n} \sum_{i=1}^{N} c^{N-i} \omega_{i}(t).
\end{equation}
It follows from \eqref{F_definition} that
\begin{equation}\label{J-1}
\begin{aligned}
m\dot{F}(t) + F(t) &= m (\dot{\bar{\omega}}(t) - \dot{\underline{\omega}}(t)) + (\bar{\omega}(t) - \underline{\omega}(t))= (m\dot{\bar{\omega}}(t) + \bar{\omega}(t)) - (m\dot{\underline{\omega}}(t) + \underline{\omega}(t)).
\end{aligned}
\end{equation}
We first cope with the term $m\dot{\bar{\omega}}(t) + \bar{\omega}(t)$ in \eqref{J-1},
\begin{equation*}
\begin{aligned}
m\dot{\bar{\omega}}(t) + \bar{\omega}(t) 
%&= m \frac{1}{\sum_{n=0}^{N-1} c^n} \sum_{i=1}^N c^{i-1} \dot{\omega}_{i}(t) + \frac{1}{\sum_{n=0}^{N-1} c^n} \sum_{i=1}^N c^{i-1} \omega_{i}(t)\\
&=\frac{1}{\sum_{n=0}^{N-1} c^n} \sum_{i=1}^N c^{i-1} (m\dot{\omega}_i(t) + \omega_i(t))\\
&= \frac{1}{\sum_{n=0}^{N-1} c^n} \sum_{i=1}^N c^{i-1}\left[ \Omega_i + \frac{K}{N} \sum_{j=1}^N \psi_{ij} \sin (\theta_j(t) - \theta_i(t) + \alpha)\right]\\
%&=  \frac{1}{\sum_{n=0}^{N-1} c^n} \sum_{i=1}^N c^{i-1} \Omega_i +  \frac{K}{N(\sum_{n=0}^{N-1} c^n)} \sum_{i=1}^N c^{i-1} \sum_{j=1}^N \psi_{ij} \sin (\theta_j(t) - \theta_i(t) + \alpha)\\
&= \frac{1}{\sum_{n=0}^{N-1} c^n} \sum_{i=1}^N c^{i-1} \Omega_i \\
&\quad+  \frac{K}{N(\sum_{n=0}^{N-1} c^n)} \sum_{i=1}^N c^{i-1} \sum_{j=1}^N \psi_{ij} \left[\sin(\theta_j(t) - \theta_i(t)) \cos \alpha + \cos(\theta_j(t) - \theta_i(t)) \sin \alpha  \right].
\end{aligned}
\end{equation*}
Moreover, it can be estimated as below
\begin{equation}\label{J-2}
\begin{aligned}
m\dot{\bar{\omega}}(t) + \bar{\omega}(t) &\le \Omega_M + K \psi_u \sin \alpha + K \psi_u \cos\alpha D_\theta(t),
\end{aligned}
\end{equation}
where we used the following relations
\begin{equation*}
|\cos (\theta_j(t) - \theta_i(t))| \le 1,\quad |\sin (\theta_j(t) - \theta_i(t))| \le |\theta_j(t) - \theta_i(t)| \le D_\theta(t).
\end{equation*}
%Next, we deal with another term $m\dot{\underline{\omega}}(t) + \underline{\omega}(t)$ in \eqref{J-1},
%\begin{equation*}
%\begin{aligned}
%m\dot{\underline{\omega}}(t) + \underline{\omega}(t) &= m \frac{1}{\sum_{n=0}^{N-1} c^n} \sum_{i=1}^{N} c^{N-i} \dot{\omega}_{i}(t) + \frac{1}{\sum_{n=0}^{N-1} c^n} \sum_{i=1}^{N} c^{N-i} \omega_{i}(t)\\
%&= \frac{1}{\sum_{n=0}^{N-1} c^n} \sum_{i=1}^{N} c^{N-i} (m\dot{\omega}_i(t) + \omega_i(t))\\
%&=\frac{1}{\sum_{n=0}^{N-1} c^n} \sum_{i=1}^{N} c^{N-i} \left[ \Omega_i + \frac{K}{N} \sum_{j=1}^N \psi_{ij} \sin (\theta_j(t) - \theta_i(t) + \alpha)\right]\\
%&= \frac{1}{\sum_{n=0}^{N-1} c^n} \sum_{i=1}^{N} c^{N-i} \Omega_i + \frac{K}{N(\sum_{n=0}^{N-1} c^n)} \sum_{i=1}^{N} c^{N-i} \sum_{j=1}^N \psi_{ij} \sin (\theta_j(t) - \theta_i(t) + \alpha)\\
%&=\frac{1}{\sum_{n=0}^{N-1} c^n} \sum_{i=1}^{N} c^{N-i} \Omega_i \\
%&\quad +  \frac{K}{N(\sum_{n=0}^{N-1} c^n)} \sum_{i=1}^{N} c^{N-i} \sum_{j=1}^N \psi_{ij} [\sin (\theta_j(t) - \theta_i(t)) \cos \alpha + \cos (\theta_j(t) - \theta_i(t)) \sin \alpha].
%\end{aligned}
%\end{equation*}
On the other hand, we can apply the similar argument as in \eqref{J-2} to derive
\begin{equation}\label{J-3}
\begin{aligned}
m\dot{\underline{\omega}}(t) + \underline{\omega}(t) \ge \Omega_m - K \psi_u \sin \alpha - K \psi_u \cos \alpha D_\theta(t).
\end{aligned}
\end{equation}
Then, we combine \eqref{J-1}, \eqref{J-2}, \eqref{J-3} and \eqref{phs_size} to get
\begin{equation*}
\begin{aligned}
m\dot{F}(t) + F(t) 
%&= (m\dot{\bar{\omega}}(t) + \bar{\omega}(t)) - (m\dot{\underline{\omega}}(t) + \underline{\omega}(t))\\
%&\le \Omega_M + K \psi_u \sin \alpha + K \psi_u \cos\alpha D_\theta(t) - \left( \Omega_m - K \psi_u \sin \alpha - K \psi_u \cos \alpha D_\theta(t) \right)\\
&\le D(\Omega) + 2K \psi_u \sin \alpha + 2K \psi_u \cos \alpha D_\theta(t)\\
&\le D(\Omega) + 2K \psi_u \sin \alpha + 2K \psi_u \cos \alpha \frac{1}{\eta} Q(t), \quad t \in I_l.
\end{aligned}
\end{equation*}
This derives the desired result.
\end{proof}

Now, we are ready to provide a Gronwall type inequality of $\mathcal{E}_1(t)$ defined in \eqref{energy_phs}.

\begin{lemma}\label{phs_sum_dynamics}
Let $(\theta(t),\omega(t))$ be a solution to system \eqref{s_phs} and suppose Assumption $(\mathcal{H}_1)$-$(\mathcal{H}_3)$ hold. Then, we have
\begin{equation}\label{phs_sum_eq}
\frac{d}{dt} \mathcal{E}_1(t) \le 2 (D(\Omega) + 2K \psi_u \sin \alpha) -  \frac{K\psi _l \cos \alpha \sin \gamma}{N(\sum_{n=0}^{N-1} c^n)\gamma} \mathcal{E}_1(t), \quad \quad \text{a.e.} \ 0 \le t < +\infty.
\end{equation}
%\begin{equation}\label{phs_sum_eq}
%\begin{aligned}
%&\frac{d}{dt} \left( Q(t) + \frac{\eta\psi_l \sin \gamma }{2N\psi_u(\sum_{n=0}^{N-1}c^n) \gamma} m F(t) + 2m^2 A(t)\right)\\
%&\le  2 (D(\Omega) + 2K \psi_u \sin \alpha) -  \frac{K\psi _l \cos \alpha \sin \gamma}{N(\sum_{n=0}^{N-1} c^n)\gamma} \left( Q(t) + \frac{\eta\psi_l \sin \gamma }{2N\psi_u(\sum_{n=0}^{N-1}c^n) \gamma} m F(t) + 2m^2 A(t)\right).
%\end{aligned}
%\end{equation}
\end{lemma}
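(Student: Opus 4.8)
The plan is to treat $\mathcal{E}_1$ as a hypocoercive Lyapunov functional assembled from Lemma \ref{s_Q_dynamic}, Lemma \ref{f_A_dynamic} and Lemma \ref{f_F_dynamic}, and to run a continuity argument in tandem so that those inequalities are actually available on all of $[0,+\infty)$. Set $T^{*}:=\sup\{T>0:\ D_\theta(t)<\gamma\text{ for }0\le t<T\}$. Assumption $(\mathcal{H}1)$ forces $D_\theta(0)<\beta<\gamma$, so $T^{*}>0$; on $[0,T^{*})$ one has $D_\theta(t)<\gamma$, hence on the interior of each interval $I_l$ from \eqref{split-time} all three lemmas apply, with $B:=D(\Omega)+2K\psi_u\sin\alpha$. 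Because the orderings of $\theta,\omega,a$ are constant on $I_l$, the functions $Q,F,A$ are smooth there and $\frac{d}{dt}\mathcal{E}_1=\dot Q+\lambda m\dot F+2m^2\dot A$ a.e. on $[0,T^{*})$, where $\lambda:=\frac{\eta\psi_l\sin\gamma}{2N\psi_u(\sum_{n=0}^{N-1}c^n)\gamma}$ is the weight appearing in \eqref{energy_phs}; the countably many endpoints $t_l$ form a null set. I would then add the $Q$-inequality of Lemma \ref{s_Q_dynamic} to $\lambda m$ times the $F$-inequality of Lemma \ref{f_F_dynamic} and $2m^2$ times the $A$-inequality of Lemma \ref{f_A_dynamic}, i.e. with exactly the weights of $\mathcal{E}_1$.

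Two cancellations carry the argument. Write $\kappa:=\frac{K\psi_l\cos\alpha\sin\gamma}{N(\sum_{n=0}^{N-1}c^n)\gamma}$ for the damping rate in the claim. First, the cross term in $Q$: $\lambda m$ times Lemma \ref{f_F_dynamic} contributes $\lambda\cdot\frac{2K\psi_u\cos\alpha}{\eta}Q$, and a direct computation gives $\lambda\cdot\frac{2K\psi_u\cos\alpha}{\eta}=\kappa$, exactly half of the damping $-2\kappa Q$ supplied by Lemma \ref{s_Q_dynamic}; what survives is $-\kappa Q$, which is the $Q$-part of $-\kappa\mathcal{E}_1$. Second, the second-order term: $\dot Q=(m\ddot Q+\dot Q)-m\ddot Q$, and $\ddot Q$ is a difference of two convex combinations of the accelerations $a_i=\ddot\theta_i$ (the orderings being frozen on $I_l$), so $|\ddot Q|\le D_a\le\eta^{-1}A$ by Lemma \ref{size}; hence the indefinite contribution $-m\ddot Q\le\eta^{-1}mA$ has to be absorbed by the damping $-2mA$ coming from $2m^2$ times Lemma \ref{f_A_dynamic}. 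Assembling the pieces and subtracting $\kappa\mathcal{E}_1$ from both sides, the right-hand side becomes $(1+\lambda)B+\big(\tfrac{4mK\psi_u}{\eta}-\lambda+\kappa\lambda m\big)F+\big(\eta^{-1}-2+2\kappa m\big)mA$. Since $\lambda<1$ one has $(1+\lambda)B\le2B$; the $F$-coefficient is nonpositive because \eqref{assume_3} gives $\kappa m<\tfrac12$ (from $mK<\frac{N(\sum_{n=0}^{N-1}c^n)}{2\psi_l}$) and $\frac{4mK\psi_u}{\eta}\le\frac{\lambda}{2}$ (from $mK<\frac{\eta^2\psi_l\sin\gamma}{16N\psi_u^2(\sum_{n=0}^{N-1}c^n)\gamma}$), whence $\frac{4mK\psi_u}{\eta}\le\frac{\lambda}{2}<\lambda(1-\kappa m)$; and the $A$-coefficient is nonpositive once $\kappa m$ is this small and $c$ is taken large enough (as $(\mathcal{H}2)$ permits) that $\eta^{-1}+2\kappa m\le2$. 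This yields $\frac{d}{dt}\mathcal{E}_1\le2B-\kappa\mathcal{E}_1$ a.e. on $[0,T^{*})$.

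It remains to close the bootstrap. Since $\mathcal{E}_1$ is Lipschitz, Gronwall's lemma applied to $\frac{d}{dt}\mathcal{E}_1\le2B-\kappa\mathcal{E}_1$ gives $\mathcal{E}_1(t)\le\max\{\mathcal{E}_1(0),2B/\kappa\}$ on $[0,T^{*})$. The preliminary estimate \eqref{assume_21} together with $(\mathcal{H}1)$ gives $\mathcal{E}_1(0)\le D_\theta(0)+\lambda m D_\omega(0)+2m^2D_a(0)<\beta$, while \eqref{assume_4} rearranges to $2B/\kappa<\tfrac12 D^\infty\eta<\beta$; hence $\mathcal{E}_1(t)<\beta$, so by \eqref{phs_size}, $D_\theta(t)\le\eta^{-1}Q(t)\le\eta^{-1}\mathcal{E}_1(t)<\eta^{-1}\beta$ on $[0,T^{*})$. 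Because the constraint $c>\frac{2}{1-\beta/\gamma}$ in $(\mathcal{H}2)$ is precisely $\eta^{-1}\beta<\gamma$, the diameter $D_\theta$ stays below $\gamma$ with a uniform margin on $[0,T^{*})$, which by continuity is impossible unless $T^{*}=+\infty$. Consequently the differential inequality holds a.e. on $[0,+\infty)$, as claimed. The genuinely delicate point is the $A$-bookkeeping of the second paragraph: the second-order part $\ddot Q$ of the phase dynamics carries no damping of its own, and the whole scheme hinges on the damping hidden in $2m^2\dot A$ (Lemma \ref{f_A_dynamic}) being strong enough — quantitatively, on $\eta^{-1}+2\kappa m\le2$ and $\tfrac{4mK\psi_u}{\eta}\le\lambda(1-\kappa m)$ — which is exactly what the smallness of $mK$ in $(\mathcal{H}3)$ and the size of $c$ in $(\mathcal{H}2)$ are designed to deliver.
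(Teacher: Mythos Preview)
Your approach follows the paper's almost exactly: the same weighted sum of Lemmas \ref{s_Q_dynamic}, \ref{f_F_dynamic}, \ref{f_A_dynamic}, the same bootstrap on $D_\theta<\gamma$, the same Gronwall closing. There is, however, one genuine gap in the $A$-bookkeeping. You bound $|\ddot Q|\le D_a\le\eta^{-1}A$, which forces the extra requirement $\eta^{-1}+2\kappa m\le 2$; but this is \emph{not} implied by $(\mathcal{H}2)$--$(\mathcal{H}3)$. The constraint in $(\mathcal{H}2)$ is only $c>\max\{2/\cos(D^\infty+\delta),\,2/(1-\beta/\gamma)\}$, and this lower bound can sit barely above $2$ (take $D^\infty+\delta$ and $\beta/\gamma$ small), so $\eta^{-1}=(1-2/c)^{-1}$ can be arbitrarily large. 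You are not free to ``take $c$ large enough'': the lemma is stated for \emph{any} parameters satisfying the assumptions, and $c$ is fixed once those are.

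The fix is a sharper bound that the paper uses (without spelling out the justification): in fact $|\ddot Q|\le A$, not merely $\eta^{-1}A$. On each $I_l$ one has
\[
\ddot Q=\frac{1}{S}\sum_{i=1}^N(c^{i-1}-c^{N-i})\,a_{l_i},\qquad A=\frac{1}{S}\sum_{i=1}^N(c^{i-1}-c^{N-i})\,a_{p_i},
\]
where $S=\sum_{n=0}^{N-1}c^n$, $l$ is the phase-ordering permutation and $p$ the acceleration-ordering one. Since the weights $c^{i-1}-c^{N-i}$ are increasing in $i$ and the $a_{p_i}$ are increasing by construction, the rearrangement inequality gives $-A\le\ddot Q\le A$ for every permutation $l$. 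With this, the $A$-coefficient in your computation becomes $1-2+2\kappa m=-1+2\kappa m$, which is negative directly from $mK<\frac{N\sum_{n}c^n}{2\psi_l}$ in $(\mathcal{H}3)$ (yielding $2\kappa m<\cos\alpha\,\sin\gamma/\gamma<1$), and the rest of your argument goes through unchanged under the stated hypotheses.
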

\begin{proof}
First, we construct a set 
\begin{equation*}
\mathcal{B} = \{ T> 0 | D_\theta(t) + \frac{\eta\psi_l \sin \gamma }{2N\psi_u(\sum_{n=0}^{N-1} c^n)\gamma}  m D_\omega(t) + 2m^2 D_a (t) < \gamma < \pi, \forall \ 0 \le  t < T\}.
\end{equation*}
Owing to \eqref{assume_21} and the continuity of functions $D_\theta(t), D_\omega(t)$ and $D_a(t)$, we can find some $\varepsilon > 0$ such that
\begin{equation*}
D_\theta(t) + \frac{\eta\psi_l \sin \gamma }{2N\psi_u(\sum_{i=0}^{N-1} c^i)\gamma}  m D_\omega(t) + 2m^2 D_a (t) < \beta < \gamma, \quad \forall \ t \in [0, \varepsilon).
\end{equation*}
This means $\varepsilon \in \mathcal{B}$ and thus the set $\mathcal{B}$ is not empty. Then, we define $T^* = \sup \mathcal{B}$. We claim that $T^* = +\infty$. Suppose by contrary, i.e., $T^* < +\infty$. This means the following assertions hold
%\begin{equation*}\label{E-1}
%D_\theta(t) + \frac{\eta\psi_l \sin \gamma }{2N\psi_u(\sum_{i=0}^{N-1} c^i)\gamma}  m D_\omega(t) + 2m^2 D_a (t) <\gamma <  \pi, \quad \forall \ 0 \le  t < T^*,
%\end{equation*}
\begin{equation}\label{E-2}
\begin{aligned}
&D_\theta(t) + \frac{\eta\psi_l \sin \gamma }{2N\psi_u(\sum_{i=0}^{N-1} c^i)\gamma}  m D_\omega(t) + 2m^2 D_a (t) <\gamma <  \pi, \quad \forall \ 0 \le  t < T^*,\\
&D_\theta(T^*) + \frac{\eta\psi_l \sin \gamma }{2N\psi_u(\sum_{i=0}^{N-1} c^i)\gamma}  m D_\omega(T^*) + 2m^2 D_a (T^*) = \gamma,
\end{aligned}
\end{equation}
which also yields the relation
\begin{equation}\label{E-3}
D_\theta(t) < \gamma < \pi, \quad \forall \  0 \le t < T^*.
\end{equation}
We may assume the orders of the oscillators' phases, frequencies, accelerations and jerks are unchanged in $ [0,T^*)$ (Otherwise, we may split $[0,T^*)$ into a union of finitely many intervals, so that the order of each state quantity of oscillators is fixed on each time interval. Then we can study the dynamics on each interval). Thus, based on \eqref{E-3} and applying Lemma \ref{s_Q_dynamic}, Lemma \ref{f_F_dynamic} and Lemma \ref{f_A_dynamic}, we have for $0 \le t < T^*$, 
\begin{align}
&m\ddot{Q}(t) + \dot{Q}(t) \le D(\Omega) + 2K\psi_u \sin \alpha - \frac{2K\psi _l \cos \alpha \sin \gamma}{N(\sum_{n=0}^{N-1} c^n)\gamma} Q(t),\label{s_Q_eq}\\
&m \dot{F} + F(t) \le D(\Omega) + 2K \psi_u \sin \alpha + 2K \psi_u \cos \alpha \frac{1}{\eta} Q(t), \label{f_F_eq}\\
&m \dot{A}(t) + A(t) \le 2K \psi_u \frac{1}{\eta} F(t).\label{f_A_eq}
\end{align}
%\begin{equation}\label{s_Q_eq}
%m\ddot{Q}(t) + \dot{Q}(t) \le D(\Omega) + 2K\psi_u \sin \alpha - \frac{2K\psi _l \cos \alpha \sin \gamma}{N(\sum_{n=0}^{N-1} c^n)\gamma} Q(t),
%\end{equation}
%
%\begin{equation}\label{f_F_eq}
%m \dot{F} + F(t) \le D(\Omega) + 2K \psi_u \sin \alpha + 2K \psi_u \cos \alpha \frac{1}{\eta} Q(t), 
%\end{equation}
%
%\begin{equation}\label{f_A_eq}
%m \dot{A}(t) + A(t) \le 2K \psi_u \frac{1}{\eta} F(t), \quad t \ge 0.
%\end{equation}
Multiplying \eqref{f_F_eq} by $\frac{\eta\psi_l \sin \gamma }{2N\psi_u(\sum_{n=0}^{N-1}c^n) \gamma}$, we get
\begin{equation}\label{L-1}
\begin{aligned}
&\frac{\eta\psi_l \sin \gamma }{2N\psi_u(\sum_{n=0}^{N-1}c^n) \gamma} m \dot{F}(t) + \frac{\eta\psi_l \sin \gamma }{2N\psi_u(\sum_{n=0}^{N-1}c^n) \gamma} F(t) \\
&\le (D(\Omega) + 2K \psi_u \sin \alpha) \frac{\eta\psi_l \sin \gamma }{2N\psi_u(\sum_{n=0}^{N-1}c^n) \gamma} + \frac{K\psi _l \cos \alpha \sin \gamma}{N(\sum_{n=0}^{N-1} c^n)\gamma} Q(t).
\end{aligned}
\end{equation}
Moreover, we multiply \eqref{f_A_eq} by $2m$ to obtain
\begin{equation}\label{L-2}
2m^2 \dot{A}(t) + 2mA(t) \le \frac{4mK \psi_u}{\eta} F(t).
\end{equation}
Then, we add \eqref{s_Q_eq}, \eqref{L-1} and \eqref{L-2} together to get 
\begin{equation*}
\begin{aligned}
&m\ddot{Q}(t) + \dot{Q}(t) + \frac{\eta\psi_l \sin \gamma }{2N\psi_u(\sum_{n=0}^{N-1}c^n) \gamma} m \dot{F}(t) + \frac{\eta\psi_l \sin \gamma }{2N\psi_u(\sum_{n=0}^{N-1}c^n)\gamma} F(t) + 2m^2 \dot{A}(t) + 2mA(t)\\
&\le D(\Omega) + 2K\psi_u \sin \alpha - \frac{2K\psi _l \cos \alpha \sin \gamma}{N(\sum_{n=0}^{N-1} c^n)\gamma} Q(t)+ \frac{4mK \psi_u}{\eta} F(t)  \\
&\quad + (D(\Omega) + 2K \psi_u \sin \alpha) \frac{\eta\psi_l \sin \gamma }{2N\psi_u(\sum_{n=0}^{N-1}c^n) \gamma} + \frac{K\psi _l \cos \alpha \sin \gamma}{N(\sum_{n=0}^{N-1} c^n)\gamma} Q(t). 
\end{aligned}
\end{equation*}
This further implies
\begin{equation*}
\begin{aligned}
&\frac{d}{dt} \left( Q(t) + \frac{\eta\psi_l \sin \gamma }{2N\psi_u(\sum_{n=0}^{N-1}c^n) \gamma} m F(t) + 2m^2 A(t)\right)\\
&\le -  m\ddot{Q}(t) - \frac{\eta\psi_l \sin \gamma }{2N\psi_u(\sum_{n=0}^{N-1}c^n) \gamma} F(t) - 2mA(t)\\
&\quad + D(\Omega) + 2K\psi_u \sin \alpha - \frac{K\psi _l \cos \alpha \sin \gamma}{N(\sum_{n=0}^{N-1} c^n)\gamma} Q(t)+ \frac{4mK \psi_u}{\eta} F(t) \\
&\quad + (D(\Omega) + 2K \psi_u \sin \alpha) \frac{\eta\psi_l \sin \gamma }{2N\psi_u(\sum_{n=0}^{N-1}c^n) \gamma} \\
&\le 2 (D(\Omega) + 2K \psi_u \sin \alpha) - \frac{K\psi _l \cos \alpha \sin \gamma}{N(\sum_{n=0}^{N-1} c^n)\gamma} Q(t) \\
&\quad- \left(\frac{\eta\psi_l \sin \gamma }{2N\psi_u(\sum_{n=0}^{N-1}c^n) \gamma} -  \frac{4mK \psi_u}{\eta}\right) F(t) - mA(t)
\end{aligned}
\end{equation*}
where we used the facts
\begin{equation*}
 \frac{\eta\psi_l \sin \gamma }{2N\psi_u(\sum_{n=0}^{N-1}c^n) \gamma} < 1, \quad |\ddot{Q}(t)| \le A(t).
\end{equation*}
Moreover, under the assumption \eqref{assume_3} yielding
\begin{equation*}
\begin{aligned}
&mK < \frac{\eta^2\psi_l \sin \gamma }{16N\psi_u^2(\sum_{n=0}^{N-1}c^n) \gamma} \quad \Longrightarrow \quad \frac{\eta\psi_l \sin \gamma }{4N\psi_u(\sum_{n=0}^{N-1}c^n) \gamma} -  \frac{4mK \psi_u}{\eta} > 0 ,\\
&mK < \frac{N(\sum_{n=0}^{N-1}c^n) \gamma}{2 \psi_l \cos \alpha \sin \gamma}  \quad \Longrightarrow \quad    \frac{4K \psi_u \cos \alpha}{\eta} \cdot \frac{\eta\psi_l \sin \gamma }{2N\psi_u(\sum_{n=0}^{N-1}c^n) \gamma} m = mK \frac{2 \psi_l \cos \alpha \sin \gamma}{N(\sum_{n=0}^{N-1}c^n)  \gamma} < 1 ,\\
&mK < \frac{N (\sum_{n=0}^{N-1} c^n)\gamma}{2\psi_l \cos \alpha \sin \gamma} \quad \Longrightarrow \quad    \frac{K \psi_l  \cos \alpha \sin \gamma}{N (\sum_{n=0}^{N-1} c^n) \gamma} 2m = mK \frac{2\psi_l \cos \alpha \sin \gamma}{N (\sum_{n=0}^{N-1} c^n)\gamma} < 1  ,
\end{aligned}
\end{equation*}
we have for $0 \le t < T^*$,
\begin{equation*}
\begin{aligned}
&\frac{d}{dt} \left( Q(t) + \frac{\eta\psi_l \sin \gamma }{2N\psi_u(\sum_{n=0}^{N-1}c^n) \gamma} m F(t) + 2m^2 A(t)\right)\\
&\le 2 (D(\Omega) + 2K \psi_u \sin \alpha) - \frac{K\psi _l \cos \alpha \sin \gamma}{N(\sum_{n=0}^{N-1} c^n)\gamma} Q(t) - \frac{\eta\psi_l \sin \gamma }{4N\psi_u(\sum_{n=0}^{N-1}c^n) \gamma}  F(t) - mA(t)\\
&= 2 (D(\Omega) + 2K \psi_u \sin \alpha)-  \frac{K\psi _l \cos \alpha \sin \gamma}{N(\sum_{n=0}^{N-1} c^n)\gamma} \left(Q(t) + \frac{\eta}{4K \psi_u \cos \alpha}F(t) + \frac{N (\sum_{n=0}^{N-1} c^n) \gamma}{K \psi_l  \cos \alpha \sin \gamma}mA(t) \right)\\
&\le  2 (D(\Omega) + 2K \psi_u \sin \alpha) -  \frac{K\psi _l \cos \alpha \sin \gamma}{N(\sum_{n=0}^{N-1} c^n)\gamma} \left( Q(t) + \frac{\eta\psi_l \sin \gamma }{2N\psi_u(\sum_{n=0}^{N-1}c^n) \gamma} m F(t) + 2m^2 A(t)\right).
\end{aligned}
\end{equation*}
%This completes the proof of this lemma.
According to \eqref{energy_phs}, for $0 \le t < T^*$, we further have
\begin{equation}\label{L-3}
\begin{aligned}
\frac{d}{dt} \mathcal{E}_1(t) &\le 2 (D(\Omega) + 2K \psi_u \sin \alpha) -  \frac{K\psi _l \cos \alpha \sin \gamma}{N(\sum_{n=0}^{N-1} c^n)\gamma} \mathcal{E}_1(t)\\
&= -  \frac{K\psi _l \cos \alpha \sin \gamma}{N(\sum_{n=0}^{N-1} c^n)\gamma} \left(\mathcal{E}_1(t) - \frac{2 (D(\Omega) + 2K \psi_u \sin \alpha)N(\sum_{n=0}^{N-1} c^n)\gamma}{K\psi _l \cos \alpha \sin \gamma}\right).
\end{aligned}
\end{equation}
This implies that for $0 \le t < T^*$,
\begin{equation}\label{M-1}
\mathcal{E}_1(t) \le \max \left\{ \mathcal{E}_1(0), \frac{4 (D(\Omega) + 2K \psi_u \sin \alpha)N(\sum_{n=0}^{N-1} c^n)\gamma}{K\psi _l \cos \alpha \sin \gamma} \right\} .
\end{equation}
Moreover, owing to \eqref{assume_21}, \eqref{phs_size}, \eqref{fre_size} and \eqref{acce_size}, we have
\begin{equation}\label{M-2}
\begin{aligned}
\mathcal{E}_1(0) &= Q(0) + \frac{\eta\psi_l \sin \gamma }{2N\psi_u(\sum_{n=0}^{N-1}c^n) \gamma} m F(0) + 2m^2 A(0)\\
&\le D_\theta(0) + \frac{\eta\psi_l \sin \gamma }{2N\psi_u(\sum_{i=0}^{N-1} c^i)\gamma}  m D_\omega(0) + 2m^2 D_a (0)< \beta .
\end{aligned}
\end{equation}
%\begin{equation*}
%\frac{4 (D(\Omega) + 2K \psi_u \sin \alpha)N(\sum_{n=0}^{N-1} c^n)\gamma}{K\psi _l \cos \alpha \sin \gamma} < \beta
%\end{equation*}
Due to assumptions \eqref{assume_1} and \eqref{assume_4}, we see that
\begin{equation}\label{M-2a}
\frac{4 (D(\Omega) + 2K \psi_u \sin \alpha)N(\sum_{n=0}^{N-1} c^n)\gamma}{K\psi _l \cos \alpha \sin \gamma} < \eta D^\infty < \beta.
\end{equation}
Thus, we combine \eqref{M-1}, \eqref{M-2} and \eqref{M-2a} to derive
\begin{equation*}
\mathcal{E}_1(t) < \beta, \quad \text{for} \ 0 \le t < T^*.
\end{equation*}
This together with the assumption $\eta> \frac{\beta}{\gamma}$ in  \eqref{assume_1} yields
\begin{equation*}
\begin{aligned}
&D_\theta(t) + \frac{\eta\psi_l \sin \gamma }{2N\psi_u(\sum_{n=0}^{N-1}c^n) \gamma} m D_\omega(t) + 2m^2 D_a(t) \\
&\le \frac{1}{\eta} \left( Q(t) + \frac{\eta\psi_l \sin \gamma }{2N\psi_u(\sum_{n=0}^{N-1}c^n) \gamma} m F(t) + 2m^2 A(t) \right) = \frac{\mathcal{E}_1(t)}{\eta} < \frac{\beta}{\eta} < \gamma, \quad 0 \le t < T^*.
\end{aligned}
\end{equation*}
Moreover, it follows that
\begin{equation*}
\begin{aligned}
&D_\theta(T^*) + \frac{\eta\psi_l \sin \gamma }{2N\psi_u(\sum_{n=0}^{N-1}c^n) \gamma} m D_\omega(T^*) + 2m^2 D_a(T^*) \\
&= \lim_{t \to T^*} \left( D_\theta(t) + \frac{\eta\psi_l \sin \gamma }{2N\psi_u(\sum_{n=0}^{N-1}c^n) \gamma} m D_\omega(t) + 2m^2 D_a(t)\right) \le \frac{\beta}{\eta} < \gamma,
\end{aligned}
\end{equation*}
which obviously contradicts to $\eqref{E-2}_2$. Thus, we have $T^* = +\infty$. This means
\begin{equation*}
D_\theta(t) + \frac{\eta\psi_l \sin \gamma }{2N\psi_u(\sum_{n=0}^{N-1} c^n)\gamma}  m D_\omega(t) + 2m^2 D_a (t) <\gamma <  \pi, \quad \forall \ 0 \le  t < +\infty,
\end{equation*}
which further leads to
\begin{equation}\label{M-3}
D_\theta(t) < \gamma < \pi, \quad \forall \ 0 \le t < +\infty.
\end{equation}
Then based on the fact \eqref{M-3} and the Lipschitz continuity of $\mathcal{E}_1(t)$, we can apply the similar argument as in \eqref{L-3} and conclude 
\begin{equation*}
\begin{aligned}
\frac{d}{dt} \mathcal{E}_1(t) &\le 2 (D(\Omega) + 2K \psi_u \sin \alpha) -  \frac{K\psi _l \cos \alpha \sin \gamma}{N(\sum_{n=0}^{N-1} c^n)\gamma} \mathcal{E}_1(t), \quad \text{a.e.} \ 0 \le t < +\infty.
\end{aligned}
\end{equation*}
%\begin{equation}\label{M-4}
%\begin{aligned}
%\frac{d}{dt} \mathcal{E}_1(t) &\le 2 (D(\Omega) + 2K \psi_u \sin \alpha) -  \frac{K\psi _l \cos \alpha \sin \gamma}{N(\sum_{n=0}^{N-1} c^n)\gamma} \mathcal{E}_1(t)\\
%&= -  \frac{K\psi _l \cos \alpha \sin \gamma}{N(\sum_{n=0}^{N-1} c^n)\gamma} \left(\mathcal{E}_1(t) - \frac{2 (D(\Omega) + 2K \psi_u \sin \alpha)N(\sum_{n=0}^{N-1} c^n)\gamma}{K\psi _l \cos \alpha \sin \gamma}\right), \quad 0 \le t < +\infty.
%\end{aligned}
%\end{equation}
This completes the proof.
\end{proof}

Lastly, we show that all oscillators will enter into an arc confined in a quarter circle in finite time and stay there afterwards.
\begin{lemma}\label{small}
Let $(\theta(t),\omega(t))$ be a solution to system \eqref{s_phs} and suppose Assumption $(\mathcal{H}_1)$-$(\mathcal{H}_3)$ hold. Then, there exists time $t_* > 0$ such that
\begin{equation}\label{phs_bound}
\begin{aligned}
D_\theta(t) < D^\infty < \frac{\pi}{2}, \quad t \ge t_*.
\end{aligned}
\end{equation}
Moreover, we have
\begin{equation}\label{F_bound}
\begin{aligned}
&F(t) \le \frac{8 (D(\Omega) + 2K \psi_u \sin \alpha)N^2\psi_u(\sum_{n=0}^{N-1} c^n)^2\gamma^2 }{mK \eta \psi _l^2 \cos \alpha \sin^2 \gamma}, \quad t \ge t_*.
\end{aligned}
\end{equation}
%\begin{equation}\label{fre_bound}
%D_\omega(t) \le \frac{8 (D(\Omega) + 2K \psi_u \sin \alpha)N^2\psi_u(\sum_{n=0}^{N-1} c^n)^2\gamma^2 }{mK \eta^2 \psi _l^2 \cos \alpha \sin^2 \gamma}, \quad t \ge t_*.
%\end{equation}
\end{lemma}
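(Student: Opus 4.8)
The plan is to integrate the first-order dissipative differential inequality for $\mathcal{E}_1$ supplied by Lemma \ref{phs_sum_dynamics} and then convert the resulting bound on $\mathcal{E}_1$ into bounds on $D_\theta$ and $F$ via the equivalence relation \eqref{phs_size} and the definition \eqref{energy_phs}. Write $P := D(\Omega) + 2K\psi_u\sin\alpha$ and $\kappa := \frac{K\psi_l\cos\alpha\sin\gamma}{N(\sum_{n=0}^{N-1}c^n)\gamma}$, so that Lemma \ref{phs_sum_dynamics} reads $\frac{d}{dt}\mathcal{E}_1(t) \le 2P - \kappa\,\mathcal{E}_1(t)$ for a.e.\ $t \ge 0$. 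Since $\mathcal{E}_1$ is Lipschitz (hence absolutely) continuous, this integrates to $\mathcal{E}_1(t) \le e^{-\kappa t}\mathcal{E}_1(0) + \frac{2P}{\kappa}(1-e^{-\kappa t}) \le e^{-\kappa t}\mathcal{E}_1(0) + \frac{2P}{\kappa}$ for all $t \ge 0$. Choosing $t_* > 0$ so large that $e^{-\kappa t_*}\mathcal{E}_1(0) \le \frac{2P}{\kappa}$ then gives $\mathcal{E}_1(t) \le \frac{4P}{\kappa} = \frac{4(D(\Omega)+2K\psi_u\sin\alpha)N(\sum_{n=0}^{N-1}c^n)\gamma}{K\psi_l\cos\alpha\sin\gamma}$ for every $t \ge t_*$.

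Next I would read off the phase estimate. All three summands of $\mathcal{E}_1$ in \eqref{energy_phs} are nonnegative, so $Q(t) \le \mathcal{E}_1(t)$; combining this with the lower bound $\eta D_\theta(t) \le Q(t)$ from \eqref{phs_size} and with \eqref{M-2a}, which states $\frac{4P}{\kappa} < \eta D^\infty$, we get $\eta D_\theta(t) \le \mathcal{E}_1(t) \le \frac{4P}{\kappa} < \eta D^\infty$ for all $t \ge t_*$. Hence $D_\theta(t) < D^\infty < \frac{\pi}{2}$ for all $t \ge t_*$ (the last inequality being part of Assumption $(\mathcal{H}2)$), which is \eqref{phs_bound}, and the bound persists for all later times since it has been established uniformly on $[t_*,\infty)$.

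Finally, for \eqref{F_bound} I would isolate the middle term of \eqref{energy_phs}: $\frac{\eta\psi_l\sin\gamma}{2N\psi_u(\sum_{n=0}^{N-1}c^n)\gamma}\,mF(t) \le \mathcal{E}_1(t) \le \frac{4P}{\kappa}$ for $t \ge t_*$, so that $F(t) \le \frac{2N\psi_u(\sum_{n=0}^{N-1}c^n)\gamma}{m\eta\psi_l\sin\gamma}\cdot\frac{4(D(\Omega)+2K\psi_u\sin\alpha)N(\sum_{n=0}^{N-1}c^n)\gamma}{K\psi_l\cos\alpha\sin\gamma}$, which is precisely the asserted bound \eqref{F_bound}. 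The argument is short because the substantive work is already contained in Lemma \ref{phs_sum_dynamics}; the only points needing care are legitimizing the Gronwall step for a merely a.e.\ differential inequality (handled by the Lipschitz regularity of $\mathcal{E}_1$ noted before \eqref{energy_phs}) and bookkeeping the constants so that \eqref{M-2a} applies. Neither is a genuine obstacle, so this lemma is in effect a corollary of the preceding one.
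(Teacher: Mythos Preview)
Your proof is correct and follows essentially the same approach as the paper: integrate the differential inequality of Lemma \ref{phs_sum_dynamics} to obtain $\mathcal{E}_1(t)\le \frac{4P}{\kappa}$ for $t\ge t_*$, then extract the bounds on $D_\theta$ and $F$ from the nonnegative summands in \eqref{energy_phs} together with \eqref{phs_size} and \eqref{M-2a}. The only cosmetic difference is that the paper splits into two cases according to whether $\mathcal{E}_1(0)$ lies above or below $\frac{4P}{\kappa}$ and, in the former case, uses a linear-decrease argument to get an explicit finite $t_*$, whereas you apply Gronwall directly and pick $t_*$ from the exponential tail; both routes land on the same bound $\mathcal{E}_1(t)\le \frac{4P}{\kappa}$, and your version is arguably tidier.
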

\begin{proof}
According to \eqref{phs_sum_eq}, we see that
\begin{equation}\label{M-4}
\begin{aligned}
\frac{d}{dt} \mathcal{E}_1(t) &\le 2 (D(\Omega) + 2K \psi_u \sin \alpha) -  \frac{K\psi _l \cos \alpha \sin \gamma}{N(\sum_{n=0}^{N-1} c^n)\gamma} \mathcal{E}_1(t),\\
&= -  \frac{K\psi _l \cos \alpha \sin \gamma}{N(\sum_{n=0}^{N-1} c^n)\gamma} \left(\mathcal{E}_1(t) - \frac{2 (D(\Omega) + 2K \psi_u \sin \alpha)N(\sum_{n=0}^{N-1} c^n)\gamma}{K\psi _l \cos \alpha \sin \gamma}\right), \ \text{a.e.} \ 0 \le t < +\infty.\\
\end{aligned}
\end{equation}
We consider two cases below.

\noindent $\bullet$ {\bf Case 1:} If 
\begin{equation*}
\mathcal{E}_1(0) > \frac{4 (D(\Omega) + 2K \psi_u \sin \alpha)N(\sum_{n=0}^{N-1} c^n)\gamma}{K\psi _l \cos \alpha \sin \gamma},
\end{equation*}
then for $\mathcal{E}_1(t) \in [\frac{4 (D(\Omega) + 2K \psi_u \sin \alpha)N(\sum_{n=0}^{N-1} c^n)\gamma}{K\psi _l \cos \alpha \sin \gamma},\mathcal{E}_1(0)]$, it follows from \eqref{M-4} that 
\begin{equation*}
\begin{aligned}
\frac{d}{dt} \mathcal{E}_1(t) 
&\le -  \frac{K\psi _l \cos \alpha \sin \gamma}{N(\sum_{n=0}^{N-1} c^n)\gamma} \left(\mathcal{E}_1(t) - \frac{2 (D(\Omega) + 2K \psi_u \sin \alpha)N(\sum_{n=0}^{N-1} c^n)\gamma}{K\psi _l \cos \alpha \sin \gamma}\right)\\
%&\le -  \frac{K\psi _l \cos \alpha \sin \gamma}{N(\sum_{n=0}^{N-1} c^n)\gamma} \left(\frac{4 (D(\Omega) + 2K \psi_u \sin \alpha)N(\sum_{n=0}^{N-1} c^n)\gamma}{K\psi _l \cos \alpha \sin \gamma} - \frac{2 (D(\Omega) + 2K \psi_u \sin \alpha)N(\sum_{n=0}^{N-1} c^n)\gamma}{K\psi _l \cos \alpha \sin \gamma}\right)\\
&\le  -  \frac{K\psi _l \cos \alpha \sin \gamma}{N(\sum_{n=0}^{N-1} c^n)\gamma} \cdot \frac{2 (D(\Omega) + 2K \psi_u \sin \alpha)N(\sum_{n=0}^{N-1} c^n)\gamma}{K\psi _l \cos \alpha \sin \gamma}\\
&= - 2 (D(\Omega) + 2K \psi_u \sin \alpha) < 0.
\end{aligned}
\end{equation*}
This yields
\begin{equation}\label{M-5}
\mathcal{E}_1(t) \le \frac{4 (D(\Omega) + 2K \psi_u \sin \alpha)N(\sum_{n=0}^{N-1} c^n)\gamma}{K\psi _l \cos \alpha \sin \gamma} , \quad \text{for} \ t \ge t_*,
\end{equation}
where $t_*$ satisfies 
\begin{equation*}
t_* = \frac{\mathcal{E}_1(0) - \frac{4 (D(\Omega) + 2K \psi_u \sin \alpha)N(\sum_{n=0}^{N-1} c^n)\gamma}{K\psi _l \cos \alpha \sin \gamma}}{2 (D(\Omega) + 2K \psi_u \sin \alpha)} \le \frac{\mathcal{E}_1(0)}{2 (D(\Omega) + 2K \psi_u \sin \alpha)} < \frac{\beta}{2 (D(\Omega) + 2K \psi_u \sin \alpha)}.
\end{equation*}

\noindent $\bullet$ {\bf Case 2:} If 
\begin{equation*}
\mathcal{E}_1(0) \le \frac{4 (D(\Omega) + 2K \psi_u \sin \alpha)N(\sum_{n=0}^{N-1} c^n)\gamma}{K\psi _l \cos \alpha \sin \gamma},
\end{equation*}
then we see from \eqref{M-4} that
\begin{equation}\label{M-6}
\mathcal{E}_1(t) \le \frac{4 (D(\Omega) + 2K \psi_u \sin \alpha)N(\sum_{n=0}^{N-1} c^n)\gamma}{K\psi _l \cos \alpha \sin \gamma}, \quad \text{for} \ t \ge 0.
\end{equation}
Hence, we set $t_* = 0$ in this case.

Then, we combine \eqref{M-5} and \eqref{M-6} to obtain
\begin{equation*}
\begin{aligned}
\mathcal{E}_1(t)\le \frac{4 (D(\Omega) + 2K \psi_u \sin \alpha)N(\sum_{n=0}^{N-1} c^n)\gamma}{ K \psi _l \cos \alpha \sin \gamma}, \quad t \ge t_*.
\end{aligned}
\end{equation*}
This further yields from \eqref{energy_phs} that
\begin{equation}\label{M-7}
F(t) \le \frac{8 (D(\Omega) + 2K \psi_u \sin \alpha)N^2\psi_u(\sum_{n=0}^{N-1} c^n)^2\gamma^2 }{mK \eta \psi _l^2 \cos \alpha \sin^2 \gamma}, \quad t \ge t_*.
\end{equation}
Moreover, due to assumption \eqref{assume_4}, we have 
\begin{equation*}
\begin{aligned}
&D_\theta(t) + \frac{\eta\psi_l \sin \gamma }{2N\psi_u(\sum_{i=0}^{N-1} c^i)\gamma}  m D_\omega(t) + 2m^2 D_a (t) \\
&\le \frac{\mathcal{E}_1(t)}{\eta}\le \frac{4 (D(\Omega) + 2K \psi_u \sin \alpha)N(\sum_{n=0}^{N-1} c^n)\gamma}{ K \eta \psi _l \cos \alpha \sin \gamma} &< D^\infty < \frac{\pi}{2}, \quad \text{for} \ t \ge t_*,
\end{aligned}
\end{equation*}
which immediately implies
\begin{equation}\label{M-8}
D_\theta(t) < D^\infty < \frac{\pi}{2}, \quad t \ge t_*.
\end{equation}
Therefore, the desired result follows from \eqref{M-8} and \eqref{M-7}.

\end{proof}

\section{Exponential frequency synchronization}\label{sec:4}
\setcounter{equation}{0}
In this section, we present that the frequency synchronization emerges exponentially fast for system \eqref{s_phs}. Similar to Section \ref{sec:3}, we investigate the dynamics of energy function $\mathcal{E}_2(t)$ defined in \eqref{energy_fre} which can control the frequency diameter, and then provide a dissipative differential inequality of $\mathcal{E}_2(t)$ resulting in the exponential synchronization. 

For this,
we consider system \eqref{s_fre} starting from $t_*$ mentioned in Lemma \ref{small}, i.e.,
\begin{equation}\label{s_fre2}
\begin{aligned}
m\ddot{\omega}_i(t) + \dot{\omega}_i(t) = \frac{K}{N} \sum_{j=1}^N \psi_{ij} \cos (\theta_j(t) - \theta_i(t) + \alpha) (\omega_j(t) - \omega_i(t)), \quad t \ge t_*,
\end{aligned}
\end{equation}
Similarly, owing to the analyticity of solution, we divide the time interval $[t_*,+\infty)$ into a union of countable intervals:
\begin{equation}\label{split-time2}
[t_*,+\infty) = \bigcup_{l=1}^{+\infty} J_l, \quad J_l = [s_{l-1}, s_l), \quad s_0 = t_*,
\end{equation}
so that the orders of oscillators' phases, frequencies, accelerations and jerks are all fixed on each time interval $J_l$. 
%In order to catch the dissipation of frequency diameter $D_\omega(t)$, we switch to investigate the dynamics of $F(t)$ defined in \eqref{F_function}  for any time $t \ge t_*$.
Parallel to Lemma \ref{phs_subadd}, we present the following estimate which will be used in the analysis on the dynamics of $F(t)$. As the proof is very similar to Lemma \ref{phs_subadd}, we omit the details. 
\begin{lemma}\label{fre_subadd}
Let $(\theta(t),\omega(t))$ be a solution to system \eqref{s_phs} and suppose the order of oscillators' frequencies at time $t$ satisfies
\begin{equation*}
\omega_1(t) \le \omega_2(t) \le \cdots \le \omega_N(t).
\end{equation*}
Then, we have
\begin{equation*}
\sum_{i=2}^N \sum_{\substack{j=1\\(j,i) \in E}}^{i-1} (\omega_j(t) - \omega_i(t))  \le -D_\omega(t),\quad \sum_{i=1}^{N-1} \sum_{\substack{j=i+1 \\ (j,i) \in E}}^N (\omega_j(t) - \omega_i(t)) \ge D_\omega(t).
\end{equation*}
\end{lemma}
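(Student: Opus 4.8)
The plan is to reduce the statement to a purely combinatorial fact about the connected graph $G$ via an exact telescoping identity, so that no quantitative/trigonometric work is needed. First I would note that the two inequalities are equivalent: since $\psi_{ij}=\psi_{ji}$ we have $(j,i)\in E\iff(i,j)\in E$, so after relabelling the summation indices,
\begin{equation*}
\sum_{i=1}^{N-1}\sum_{\substack{j=i+1\\(j,i)\in E}}^N\big(\omega_j(t)-\omega_i(t)\big)=-\sum_{i=2}^N\sum_{\substack{j=1\\(j,i)\in E}}^{i-1}\big(\omega_j(t)-\omega_i(t)\big),
\end{equation*}
and it therefore suffices to prove the first estimate.

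Next I would telescope each edge term along the ordering $\omega_1(t)\le\cdots\le\omega_N(t)$. For an edge $(j,i)\in E$ with $j<i$ one has $\omega_j(t)-\omega_i(t)=-\sum_{k=j}^{i-1}\big(\omega_{k+1}(t)-\omega_k(t)\big)$, so, writing $\Delta_k:=\omega_{k+1}(t)-\omega_k(t)\ge 0$ and exchanging the order of summation,
\begin{equation*}
\sum_{i=2}^N\sum_{\substack{j=1\\(j,i)\in E}}^{i-1}\big(\omega_j(t)-\omega_i(t)\big)=-\sum_{k=1}^{N-1}n_k\,\Delta_k,\qquad n_k:=\#\{(j,i)\in E:\ j\le k<i\}.
\end{equation*}
Here $n_k$ is exactly the number of edges of $G$ joining the block $\{1,\dots,k\}$ to the block $\{k+1,\dots,N\}$.

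The one essential point is connectedness of $G$, which forces $n_k\ge 1$ for every $k=1,\dots,N-1$: if $n_k=0$, then $\{1,\dots,k\}$ would be a proper nonempty subset of $V$ with no edge leaving it, contradicting that $G$ is connected. Since every $\Delta_k\ge 0$, this yields $\sum_{k=1}^{N-1}n_k\Delta_k\ge\sum_{k=1}^{N-1}\Delta_k=\omega_N(t)-\omega_1(t)=D_\omega(t)$, hence $\sum_{i=2}^N\sum_{j<i,\,(j,i)\in E}(\omega_j(t)-\omega_i(t))\le-D_\omega(t)$, and then the second inequality follows from the equivalence recorded above.

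I do not expect a serious obstacle here: the linearity of the "coupling'' makes the telescoping exact, in contrast with Lemma \ref{phs_subadd}, where the nonlinearity of $\sin$ on $(0,\gamma)$ destroys the clean telescoping and necessitates the longer argument of the appendix. The only mild care needed is bookkeeping with the edge-set convention $E=\{(j,i):\psi_{ij}>0\}$ together with the symmetry $\psi_{ij}=\psi_{ji}$, and checking that the restriction $j<i$ under the ordering $\omega_1(t)\le\cdots\le\omega_N(t)$ makes each $\Delta_k$ nonnegative, which is immediate.
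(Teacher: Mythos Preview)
Your argument is correct. The telescoping into consecutive increments $\Delta_k$ and the cut-count $n_k\ge 1$ coming from connectedness give exactly the inequality, and the reduction of the second estimate to the first via the symmetry $(j,i)\in E\Leftrightarrow(i,j)\in E$ is fine.

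Your route, however, is genuinely different from the paper's. The paper does not give a separate proof here; it says the argument is ``very similar to Lemma~\ref{phs_subadd}'' and refers to the Appendix. That proof fixes a single path $1=i_0\leftrightarrows i_1\leftrightarrows\cdots\leftrightarrows i_p=N$ in $G$, discards all off-path edge contributions (each is nonpositive), and then shows by an induction along the path that the remaining sum is at most $\sin(\theta_1-\theta_N)$, using concavity/monotonicity of $\sin$ on suitable ranges. In the frequency setting the same scheme specializes to a bare telescoping along the chosen path. By contrast, you never pick a path: you rewrite the \emph{entire} edge sum as $-\sum_k n_k\Delta_k$ and invoke that every vertex cut $\{1,\dots,k\}\,\vert\,\{k+1,\dots,N\}$ is crossed by at least one edge. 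Your approach is shorter and purely combinatorial precisely because the coupling here is linear; the paper's path-plus-induction machinery is what is actually needed for Lemma~\ref{phs_subadd}, where the nonlinearity of $\sin$ destroys the exact telescoping you rely on, and it then carries over to the present lemma essentially for free.
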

Now, we intend to prove the emergence of synchronization. In Lemma \ref{small}, we already prove that all oscillators will stay in a small region after $t_*$. Then we have the following second order differential inequality of $F(t)$.
\begin{lemma}\label{s_fre_dynamic}
Let $(\theta(t), \omega(t))$ be a solution to system \eqref{s_phs} and suppose Assumption $(\mathcal{H}_1)$-$(\mathcal{H}_3)$  hold.Then, we have
\begin{equation}\label{s_fre_eq}
m\ddot{F}(t) + \dot{F}(t) \le - \frac{2K \psi_l}{N(\sum_{n=0}^{N-1} c^n)} F(t), \quad t\in J_l, \ l=1,2,\cdots,
\end{equation}
where $J_l$ is defined in \eqref{split-time2}.
\end{lemma}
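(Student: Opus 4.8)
The strategy mirrors the derivation of the second-order inequality for $Q(t)$ in Lemma~\ref{s_Q_dynamic}, but now applied to the frequency combination $F(t)$ using the differentiated system \eqref{s_fre2}. Fix a time interval $J_l$ from the decomposition \eqref{split-time2}, so that on $J_l$ the ordering of the frequencies is fixed; without loss of generality assume $\omega_1(t)\le\omega_2(t)\le\cdots\le\omega_N(t)$ on $J_l$, so that by \eqref{F_definition} we have $F(t)=\bar\omega(t)-\underline\omega(t)$ with the explicit convex-combination weights. The key identity is
\[
m\ddot F(t)+\dot F(t) = \bigl(m\ddot{\bar\omega}(t)+\dot{\bar\omega}(t)\bigr) - \bigl(m\ddot{\underline\omega}(t)+\dot{\underline\omega}(t)\bigr),
\]
and each of the two bracketed terms is handled by summing \eqref{s_fre2} against the weights $c^{i-1}$ (resp.\ $c^{N-i}$), exactly as in Step~1 and Step~2 of Lemma~\ref{s_Q_dynamic}.

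First I would treat $m\ddot{\bar\omega}(t)+\dot{\bar\omega}(t) = \frac{1}{\sum_{n=0}^{N-1}c^n}\sum_{i=1}^N c^{i-1}\bigl(m\ddot\omega_i(t)+\dot\omega_i(t)\bigr)$ and substitute \eqref{s_fre2} for each $m\ddot\omega_i+\dot\omega_i$. This produces a double sum $\sum_i c^{i-1}\sum_j \psi_{ij}\cos(\theta_j-\theta_i+\alpha)(\omega_j-\omega_i)$. Using the symmetry $\psi_{ij}=\psi_{ji}$ and reindexing as in \eqref{H-1a}, this rearranges into $\sum_{i=2}^N\sum_{j=1}^{i-1}(c^{i-1}-c^{j-1})\psi_{ij}\cos(\theta_j-\theta_i+\alpha)(\omega_j-\omega_i)$. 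Here the crucial sign structure: for $j<i$ we have $\omega_j-\omega_i\le 0$, and since $D_\theta(t)<D^\infty<\pi/2$ (from Lemma~\ref{small}) together with $\alpha<\delta<\pi/2-D^\infty$ (Assumption~$(\mathcal{H}2)$), the argument $\theta_j-\theta_i+\alpha$ stays in $(-\pi/2,\pi/2)$, so $\cos(\theta_j-\theta_i+\alpha)\ge\cos(D^\infty+\delta)>0$. Combined with $c^{i-1}-c^{j-1}>1$ (cf.\ \eqref{H-1b}) and $\psi_{ij}\ge\psi_l$ whenever $(j,i)\in E$, we can bound $m\ddot{\bar\omega}+\dot{\bar\omega}\le \frac{K\psi_l}{N(\sum_{n=0}^{N-1}c^n)}\sum_{i=2}^N\sum_{(j,i)\in E, j<i}(\omega_j-\omega_i)$, and then Lemma~\ref{fre_subadd} gives $\le -\frac{K\psi_l}{N(\sum_{n=0}^{N-1}c^n)}D_\omega(t)$. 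An entirely symmetric argument on the minimal combination gives $m\ddot{\underline\omega}+\dot{\underline\omega}\ge \frac{K\psi_l}{N(\sum_{n=0}^{N-1}c^n)}D_\omega(t)$.

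Subtracting the two estimates yields $m\ddot F(t)+\dot F(t)\le -\frac{2K\psi_l}{N(\sum_{n=0}^{N-1}c^n)}D_\omega(t)$, and since $F(t)\le D_\omega(t)$ by \eqref{fre_size}, we conclude $m\ddot F(t)+\dot F(t)\le -\frac{2K\psi_l}{N(\sum_{n=0}^{N-1}c^n)}F(t)$ on $J_l$, which is exactly \eqref{s_fre_eq}. I expect the main obstacle to be the careful bookkeeping in the double-sum rearrangement and verifying the sign conditions: one must check that the cosine weights are genuinely positive (this is where $D^\infty<\pi/2$ and the slack $\alpha<\delta<\pi/2-D^\infty$ are used, guaranteeing $\theta_j-\theta_i+\alpha$ never escapes the first/fourth quadrant), and that dropping the terms with $(j,i)\notin E$ only helps the inequality --- which holds because each retained summand $(c^{i-1}-c^{j-1})\psi_{ij}\cos(\cdots)(\omega_j-\omega_i)$ is nonpositive, so discarding nonpositive terms preserves the upper bound. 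The coefficient $c$ from Assumption~$(\mathcal{H}2)$ does not enter the sign analysis here (unlike the phase case, we do not need $c>2/\cos(D^\infty+\delta)$ for this particular inequality), but it is implicitly present through the normalization $\sum_{n=0}^{N-1}c^n$; the role of $c$ resurfaces when this inequality is later combined with the $A(t)$ and $B(t)$ estimates to build the first-order dissipative inequality for $\mathcal{E}_2(t)$.
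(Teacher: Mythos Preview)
Your overall structure is right, but the double-sum rearrangement is incorrect and this breaks the argument. You claim that the symmetry $\psi_{ij}=\psi_{ji}$ and reindexing as in \eqref{H-1a} give
\[
\sum_{i=1}^N c^{i-1}\sum_{j=1}^N \psi_{ij}\cos(\theta_j-\theta_i+\alpha)(\omega_j-\omega_i)
=\sum_{i=2}^N\sum_{j=1}^{i-1}(c^{i-1}-c^{j-1})\psi_{ij}\cos(\theta_j-\theta_i+\alpha)(\omega_j-\omega_i).
\]
But in \eqref{H-1a} the coupling factor was $\sin(\theta_j-\theta_i)$, which is antisymmetric under $i\leftrightarrow j$. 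Here the factor $\cos(\theta_j-\theta_i+\alpha)$ is \emph{not} symmetric in $i,j$ because of the frustration shift: $\cos(\theta_j-\theta_i+\alpha)\ne\cos(\theta_i-\theta_j+\alpha)$ unless $\alpha=0$. Carrying out the index swap correctly on the $j>i$ block yields
\[
\sum_{i=2}^N\sum_{j=1}^{i-1}\bigl[c^{i-1}\cos(\theta_j-\theta_i+\alpha)-c^{j-1}\cos(\theta_i-\theta_j+\alpha)\bigr]\psi_{ij}(\omega_j-\omega_i),
\]
and the two cosines cannot be merged.

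The paper's proof handles this by bounding the two blocks \emph{before} combining: for $j<i$ (where $\omega_j-\omega_i\le0$) use $\cos(\theta_j-\theta_i+\alpha)\ge\cos(D^\infty+\delta)$, and for $j>i$ (where $\omega_j-\omega_i\ge0$) use $\cos(\theta_j-\theta_i+\alpha)\le1$. After swapping indices in the second block this produces the bracket $c^{i-1}\cos(D^\infty+\delta)-c^{j-1}$, and to conclude that this exceeds $1$ one needs precisely $c\cos(D^\infty+\delta)>2$, i.e.\ the condition $c>2/\cos(D^\infty+\delta)$ from Assumption~$(\mathcal{H}2)$. So your remark that ``we do not need $c>2/\cos(D^\infty+\delta)$ for this particular inequality'' is wrong: that hypothesis is exactly what makes the frequency estimate go through in the presence of frustration. (With $\alpha=0$ your shortcut would be fine, since then the cosine is even and the $(c^{i-1}-c^{j-1})$ factorization is valid.)
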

\begin{proof}
For any fixed time interval $J_l$ and $t\in J_l$ with $l=1,2,\cdots$, the second order derivative is well defined. Moreover, there exists one permutation $k_1k_2\cdots k_N$ of \{1,2,\ldots,N\} such that oscillators' frequencies at time $t$ are well-ordering as below
\begin{equation}\label{random_order33}
\omega_{k_1}(t) \le \omega_{k_2}(t) \le \cdots \le \omega_{k_N}(t).
\end{equation}
%Note that the permutation $k_1k_2\cdots k_N$ here may differ from  the permutation $l_1l_2\cdots l_N$ in \eqref{random_order12} at the same time $t$.
For the convenience of discussion, in \eqref{random_order33}, we assume $k_i = i$ without loss of generality, i.e.,
\begin{equation}\label{regular_order31}
\omega_1(t) \le \omega_2 (t) \le \cdots \le \omega_N(t),
\end{equation}
if necessary, we still adopt \eqref{random_order33}.
Then based on \eqref{regular_order31} and the construction of $F(t)$ in \eqref{F_function}, we have
\begin{equation}\label{F_definition2}
F(t) = \bar{\omega}(t) - \underline{\omega}(t), \quad \bar{\omega}(t) = \frac{1}{\sum_{n=0}^{N-1} c^n} \sum_{i=1}^N c^{i-1} \omega_{i}(t), \quad \underline{\omega}(t) = \frac{1}{\sum_{n=0}^{N-1} c^n} \sum_{i=1}^{N} c^{N-i} \omega_{i}(t).
\end{equation}
It's easy to see that
\begin{equation}\label{N-1}
m\ddot{F}(t) + \dot{F}(t) = m(\ddot{\bar{\omega}}(t) - \ddot{\underline{\omega}}(t)) + (\dot{\bar{\omega}}(t) - \dot{\underline{\omega}}(t)) = (m\ddot{\bar{\omega}}(t)+\dot{\bar{\omega}}(t)) - (m\ddot{\underline{\omega}}(t) + \dot{\underline{\omega}}(t)).
\end{equation}
We first estimate the dynamics of $\bar{\omega}(t)$. It follows from \eqref{F_definition2} and \eqref{s_fre2} that
\begin{equation}\label{N-2}
\begin{aligned}
m\ddot{\bar{\omega}}(t)+\dot{\bar{\omega}}(t) &= m \frac{1}{\sum_{n=0}^{N-1} c^n} \sum_{i=1}^N c^{i-1} \ddot{\omega}_{i}(t) + \frac{1}{\sum_{n=0}^{N-1} c^n} \sum_{i=1}^N c^{i-1} \dot{\omega}_{i}(t)\\
&= \frac{1}{\sum_{n=0}^{N-1} c^n} \sum_{i=1}^N c^{i-1} (m\ddot{\omega}_{i}(t) + \dot{\omega}(t)) \\
%&= \frac{1}{\sum_{n=0}^{N-1} c^n} \sum_{i=1}^N c^{i-1} \frac{K}{N} \sum_{j=1}^N \psi_{ij} \cos (\theta_j(t) - \theta_i(t) + \alpha) (\omega_j(t) - \omega_i(t))\\
&=\frac{K}{N(\sum_{n=0}^{N-1} c^n)} \sum_{i=1}^N c^{i-1}\sum_{j=1}^N \psi_{ij} \cos (\theta_j(t) - \theta_i(t) + \alpha) (\omega_j(t) - \omega_i(t)).
\end{aligned}
\end{equation}
%&=  \frac{K}{N(\sum_{n=0}^{N-1} c^n)} \sum_{i=1}^N c^{i-1}\sum_{j=1}^N \psi_{ij} \cos (\theta_j(t) - \theta_i(t) + \alpha) (\omega_j(t) - \omega_i(t))\\
Moreover, we find that
\begin{equation}\label{N-3}
\begin{aligned}
&\sum_{i=1}^N c^{i-1}\sum_{j=1}^N \psi_{ij} \cos (\theta_j(t) - \theta_i(t) + \alpha) (\omega_j(t) - \omega_i(t))\\
&=  \sum_{i=1}^N c^{i-1}\sum_{\substack{j=1\\ j < i}}^N \psi_{ij} \cos (\theta_j(t) - \theta_i(t) + \alpha) (\omega_j(t) - \omega_i(t)) \\
&\quad+\sum_{i=1}^N c^{i-1}\sum_{\substack{j=1\\ j > i}}^N \psi_{ij} \cos (\theta_j(t) - \theta_i(t) + \alpha) (\omega_j(t) - \omega_i(t)) \\
%&=\sum_{i=1}^N c^{i-1}\sum_{\substack{j=1\\ j < i}}^N \psi_{ij} \cos (\theta_j(t) - \theta_i(t) + \alpha) (\omega_j(t) - \omega_i(t)) \\
%&\quad +  \sum_{j=1}^N c^{j-1}\sum_{\substack{i=1\\ i > j}}^N \psi_{ji} \cos (\theta_i(t) - \theta_j(t) + \alpha) (\omega_i(t) - \omega_j(t)) \\
%&\le \frac{K}{N(\sum_{n=0}^{N-1} c^n)} \sum_{i=1}^N c^{i-1}\sum_{\substack{j=1\\ j < i}}^N \psi_{ij} \cos (D^\infty + \alpha) (\omega_j(t) - \omega_i(t))\\
%&\quad + \frac{K}{N(\sum_{n=0}^{N-1} c^n)} \sum_{j=1}^N c^{j-1}\sum_{\substack{i=1\\ i > j}}^N \psi_{ji} (\omega_i(t) - \omega_j(t))\\
&\le\sum_{i=1}^N \sum_{\substack{j=1\\ j < i}}^N c^{i-1} \cos (D^\infty + \delta) \psi_{ij}(\omega_j(t) - \omega_i(t))-   \sum_{j=1}^N \sum_{\substack{i=1\\ i > j}}^N c^{j-1}\psi_{ji} (\omega_j(t) - \omega_i(t))\\
&= \sum_{i=2}^N \sum_{j=1}^{i-1} [c^{i-1} \cos (D^\infty + \delta) - c^{j-1}] \psi_{ij} (\omega_j(t) - \omega_i(t))\le  \sum_{i=2}^N \sum_{j=1}^{i-1} \psi_{ij} (\omega_j(t) - \omega_i(t)).
\end{aligned}
\end{equation}
where we applied Lemma \ref{small}, \eqref{assume_1} and the following estimates
\begin{equation*}
\begin{aligned}
&|\theta_j(t) - \theta_i(t) + \alpha| \le D_\theta(t) + \alpha \le D^\infty + \delta < \frac{\pi}{2}, \quad \cos (\theta_j(t) - \theta_i(t) + \alpha) \ge \cos (D^\infty+ \delta) > 0. \\
%& \cos (\theta_j(t) - \theta_i(t) + \alpha) \le 1.
&c^{i-1} \cos (D^\infty + \delta) - c^{j-1} = c^{j-1}[c^{i-j} \cos (D^\infty + \delta) -1] \ge c\cos (D^\infty + \delta) -1  > 1.
\end{aligned}
\end{equation*}
%\begin{equation}\label{N-4}
%\begin{aligned}
%&\frac{K}{N(\sum_{n=0}^{N-1} c^n)} \sum_{i=1}^N c^{i-1}\sum_{j=1}^N \psi_{ij} \cos (\theta_j(t) - \theta_i(t) + \alpha) (\omega_j(t) - \omega_i(t)) \\
%&\le  \frac{K}{N(\sum_{n=0}^{N-1} c^n)}\sum_{i=2}^N \sum_{j=1}^{i-1} \psi_{ij} (\omega_j(t) - \omega_i(t)).
%\end{aligned}
%\end{equation}
Then, we collect \eqref{N-2}, \eqref{N-3} and Lemma \ref{fre_subadd} to get
\begin{equation}\label{N-5}
\begin{aligned}
m\ddot{\bar{\omega}}(t)+\dot{\bar{\omega}}(t) &\le \frac{K}{N(\sum_{n=0}^{N-1} c^n)}\sum_{i=2}^N \sum_{j=1}^{i-1} \psi_{ij} (\omega_j(t) - \omega_i(t))\\
&= \frac{K}{N(\sum_{n=0}^{N-1} c^n)}\sum_{i=2}^N \sum_{\substack{j=1\\(j,i) \in E}}^{i-1} \psi_{ij} (\omega_j(t) - \omega_i(t)) \\
&\le  \frac{K\psi_l}{N(\sum_{n=0}^{N-1} c^n)}\sum_{i=2}^N \sum_{\substack{j=1\\(j,i) \in E}}^{i-1} (\omega_j(t) - \omega_i(t))\le -\frac{K\psi_l}{N(\sum_{n=0}^{N-1} c^n)} D_\omega(t).
\end{aligned}
\end{equation}
For the dynamics of $\underline{\omega}(t)$, we can apply the similar argument as in \eqref{N-5} to obtain
\begin{equation}\label{N-6}
%m\ddot{\underline{\omega}}(t) + \dot{\underline{\omega}}(t) \ge \frac{K\psi_l}{N(\sum_{n=0}^{N-1} c^n)} \sum_{i=1}^{N-1} \sum_{\substack{j=i+1 \\ (j,i) \in E}}^N (\omega_j(t) - \omega_i(t)) \ge \frac{K\psi_l}{N(\sum_{n=0}^{N-1} c^n)} D_\omega(t).
m\ddot{\underline{\omega}}(t) + \dot{\underline{\omega}}(t) \ge \frac{K\psi_l}{N(\sum_{n=0}^{N-1} c^n)} D_\omega(t).
\end{equation}
Thus, we combine \eqref{N-1}, \eqref{N-5} and \eqref{N-6} to obtain
\begin{equation*}
\begin{aligned}
m\ddot{F}(t) + \dot{F}(t)  &= (m\ddot{\bar{\omega}}(t)+\dot{\bar{\omega}}(t)) - (m\ddot{\underline{\omega}}(t) + \dot{\underline{\omega}}(t)) \\
&\le -\frac{2K\psi_l}{N(\sum_{n=0}^{N-1} c^n)} D_\omega(t) \le  -\frac{2K\psi_l}{N(\sum_{n=0}^{N-1} c^n)}F(t), \quad t\in J_l,
\end{aligned}
\end{equation*}
which yields the desired result.
\end{proof}

In the sequel, we directly differentiate \eqref{f_acce} with respect to time $t$ to obtain
\begin{equation}\label{s_acce}
\begin{aligned}
m \ddot{a}_i(t) + \dot{a}_i(t) &= \frac{K}{N} \sum_{j=1}^N \psi_{ij} \left[ \cos (\theta_j(t) - \theta_i(t) + \alpha) (a_j(t) - a_i(t)) \right.\\
&\qquad \qquad \qquad \left.- \sin (\theta_j(t) - \theta_i(t) + \alpha) (\omega_j(t) - \omega_i(t))^2\right]
\end{aligned}
\end{equation}
Recall $b_i(t) = \dot{a}_i(t)$. Then, it follows from \eqref{s_acce} that
\begin{equation}\label{f_b}
\begin{aligned}
m\dot{b}_i(t) + b_i(t) &= \frac{K}{N} \sum_{j=1}^N \psi_{ij} \left[ \cos (\theta_j(t) - \theta_i(t) + \alpha) (a_j(t) - a_i(t)) \right.\\
&\qquad \qquad \qquad \left.- \sin (\theta_j(t) - \theta_i(t) + \alpha) (\omega_j(t) - \omega_i(t))^2\right].
\end{aligned}
\end{equation}
We present a rough estimate on the dynamics of $B(t)$ in the following lemma.

\begin{lemma}\label{f_B_dynamic}
Let $(\theta(t), \omega(t))$ be a solution to \eqref{s_phs} and suppose Assumption $(\mathcal{H}_1)$-$(\mathcal{H}_3)$ hold. Then, we have
\begin{equation}\label{f_B_eq}
m \dot{B}(t) + B(t) \le \frac{2K \psi_u}{\eta} A(t) + \frac{2K\psi_u(D^\infty\cos \alpha + \sin \alpha)}{\eta^2}  F^2(t), \quad t\in J_l, \ l=1,2,\cdots,
\end{equation}
where $J_l$ is defined in \eqref{split-time2}.
\end{lemma}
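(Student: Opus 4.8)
The plan is to mirror the proof of Lemma~\ref{f_A_dynamic}, the only genuinely new feature being the quadratic frustration term $-\sin(\theta_j-\theta_i+\alpha)(\omega_j-\omega_i)^2$ appearing in \eqref{f_b}. First I would fix a time interval $J_l$ from \eqref{split-time2} and a time $t\in J_l$; on such an interval the order of the jerks $b_1,\dots,b_N$ is frozen, so $\dot B(t)$ is classically defined and, after relabeling so that $b_1(t)\le\cdots\le b_N(t)$, one has $B(t)=\bar b(t)-\underline b(t)$ with $\bar b$ and $\underline b$ the two convex combinations from \eqref{B_function}. Differentiating and regrouping gives $m\dot B(t)+B(t)=(m\dot{\bar b}(t)+\bar b(t))-(m\dot{\underline b}(t)+\underline b(t))$, so it suffices to bound the two pieces separately.

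For the $\bar b$-piece I would use that the coefficients $c^{i-1}/\sum_{n=0}^{N-1}c^n$ sum to one, so plugging \eqref{f_b} in presents $m\dot{\bar b}(t)+\bar b(t)$ as a convex average over $i$ of $\frac{K}{N}\sum_{j=1}^N\psi_{ij}\big[\cos(\theta_j-\theta_i+\alpha)(a_j-a_i)-\sin(\theta_j-\theta_i+\alpha)(\omega_j-\omega_i)^2\big]$. Here I would estimate each summand termwise: $|\cos(\cdot)|\le1$ and $|a_j-a_i|\le D_a(t)$ handle the first term, while for the second I would invoke Lemma~\ref{small} (valid since $J_l\subset[t_*,+\infty)$), so that $|\theta_j(t)-\theta_i(t)|\le D_\theta(t)<D^\infty$, together with the addition formula $|\sin(\theta_j-\theta_i+\alpha)|\le|\theta_j-\theta_i|\cos\alpha+\sin\alpha\le D^\infty\cos\alpha+\sin\alpha$ and $(\omega_j-\omega_i)^2\le D_\omega(t)^2$. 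Using $\sum_{j=1}^N\psi_{ij}\le N\psi_u$ then collapses everything to $m\dot{\bar b}(t)+\bar b(t)\le K\psi_u D_a(t)+K\psi_u(D^\infty\cos\alpha+\sin\alpha)D_\omega(t)^2$, and the symmetric computation on the bottom convex combination gives $m\dot{\underline b}(t)+\underline b(t)\ge -K\psi_u D_a(t)-K\psi_u(D^\infty\cos\alpha+\sin\alpha)D_\omega(t)^2$.

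Subtracting the two bounds produces $m\dot B(t)+B(t)\le 2K\psi_u D_a(t)+2K\psi_u(D^\infty\cos\alpha+\sin\alpha)D_\omega(t)^2$. Finally I would convert the diameters back to the convex-combination functions via the equivalences \eqref{acce_size} and \eqref{fre_size}: $D_a(t)\le\eta^{-1}A(t)$ and $D_\omega(t)\le\eta^{-1}F(t)$, hence $D_\omega(t)^2\le\eta^{-2}F^2(t)$, which yields exactly \eqref{f_B_eq}. I expect no serious obstacle: the argument is structurally identical to Lemma~\ref{f_A_dynamic}, and the only point requiring a little care is the bound on $|\sin(\theta_j-\theta_i+\alpha)|$, where one must exploit the quarter-circle confinement $D_\theta(t)<D^\infty<\frac\pi2$ from Lemma~\ref{small} (rather than the coarser bound $D_\theta(t)<\gamma$ available before $t_*$) so that the constant $D^\infty\cos\alpha+\sin\alpha$ — and nothing involving $\gamma$ — appears in front of $F^2(t)$.
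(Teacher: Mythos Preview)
Your proposal is correct and follows essentially the same approach as the paper: fix $J_l$, relabel so that the $b_i$ are ordered, split $m\dot B+B$ into the $\bar b$- and $\underline b$-pieces, estimate each crudely via $|\cos(\cdot)|\le1$, $|a_j-a_i|\le D_a$, the addition formula for $\sin(\theta_j-\theta_i+\alpha)$ together with Lemma~\ref{small}, and $(\omega_j-\omega_i)^2\le D_\omega^2$, then convert $D_a,D_\omega$ to $A,F$ via \eqref{acce_size}, \eqref{fre_size}. The paper's proof is identical in structure and in the inequalities used.
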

\begin{proof}
For any fixed time interval $J_l$ and $t\in J_l$ with $l=1,2,\cdots$, we can find one permutation $q_1q_2\cdots q_N$ such that oscillators' jerks at time $t$ are in a well-ordered manner,
\begin{equation}\label{random_order52}
b_{q_1}(t) \le b_{q_2}(t) \le \cdots \le b_{q_N}(t).
\end{equation}
Note that the permutation $q_1q_2\cdots q_N$ here may be different from that $k_1k_2\cdots k_N$ in \eqref{random_order33} 
%and $p_1p_2\cdots p_N$ in \eqref{random_order42} 
at the same instant.
Similarly, for convenience and without loss of generality, in \eqref{random_order42}, we assume $q_i = i$, i.e.,

\begin{equation}\label{regular_order5}
b_1(t) \le b_2 (t) \le \cdots \le b_N(t),
\end{equation}
if necessary, we still adopt \eqref{random_order52}.
Then based on \eqref{regular_order5} and the construction of $B(t)$ in \eqref{B_function}, we have
\begin{equation}\label{B_definition}
B(t) = \bar{b}(t) - \underline{b}(t), \quad \bar{b}(t) = \frac{1}{\sum_{n=0}^{N-1} c^n} \sum_{i=1}^N c^{i-1} b_{i}(t), \quad \underline{b}(t) = \frac{1}{\sum_{n=0}^{N-1} c^n} \sum_{i=1}^{N} c^{N-i} b_{i}(t).
\end{equation}
It follows from \eqref{B_definition} that
\begin{equation}\label{R-1}
\begin{aligned}
m\dot{B}(t) + B(t) &= m (\dot{\bar{b}}(t) - \dot{\underline{b}}(t)) + (\bar{b}(t) - \underline{b}(t))= (m\dot{\bar{b}}(t) + \bar{b}(t)) - (m\dot{\underline{b}}(t) + \underline{b}(t)).
\end{aligned}
\end{equation}
We first estimate the term $m\dot{\bar{b}}(t) + \bar{b}(t)$ in \eqref{R-1}. It yields from \eqref{B_definition} and \eqref{f_b} that
\begin{equation*}
\begin{aligned}
&m\dot{\bar{b}}(t) + \bar{b}(t) \\
%&= m  \frac{1}{\sum_{n=0}^{N-1} c^n} \sum_{i=1}^N c^{i-1} \dot{b}_{i}(t) +  \frac{1}{\sum_{n=0}^{N-1} c^n} \sum_{i=1}^N c^{i-1} b_{i}(t)= \frac{1}{\sum_{n=0}^{N-1} c^n} \sum_{i=1}^N c^{i-1} (m\dot{b}_{i}(t) + b_{i}(t))\\
%&= \frac{1}{\sum_{n=0}^{N-1} c^n} \sum_{i=1}^N c^{i-1} \left[\frac{K}{N} \sum_{j=1}^N \psi_{ij} \cos (\theta_j(t) - \theta_i(t) + \alpha) (a_j(t) - a_i(t)) \right.\\
%&\qquad \qquad \qquad\qquad\qquad \left.- \frac{K}{N} \sum_{j=1}^N \psi_{ij}\sin (\theta_j(t) - \theta_i(t) + \alpha) (\omega_j(t) - \omega_i(t))^2 \right]\\
& = \frac{K}{N(\sum_{n=0}^{N-1} c^n)} \sum_{i=1}^N c^{i-1}  \sum_{j=1}^N \psi_{ij} \cos (\theta_j(t) - \theta_i(t) + \alpha) (a_j(t) - a_i(t))\\
&- \frac{K}{N(\sum_{n=0}^{N-1} c^n)}\sum_{i=1}^N c^{i-1}  \sum_{j=1}^N \psi_{ij}[\sin (\theta_j(t) - \theta_i(t)) \cos \alpha + \cos (\theta_j(t) - \theta_i(t)) \sin \alpha] (\omega_j(t) - \omega_i(t))^2.
\end{aligned}
\end{equation*} 
We further make rough estimate to obtain
\begin{equation}\label{R-2}
\begin{aligned}
m\dot{\bar{b}}(t) + \bar{b}(t) &\le K \psi_u D_a(t) + K \psi_u[\cos \alpha D_\theta(t) +  \sin \alpha] D^2_\omega(t) \\
&\le K \psi_u D_a(t) + K \psi_u[D^\infty\cos \alpha  +  \sin \alpha] D^2_\omega(t),
\end{aligned}
\end{equation}
where we used Lemma \ref{small} and the following estimates
\begin{equation*}
|a_j(t) - a_i(t)| \le D_a(t), \quad |\sin (\theta_j(t) - \theta_i(t))| \le |\theta_j(t) - \theta_i(t)| \le D_\theta(t) ,\quad |\omega_j(t) - \omega_i(t)| \le D_\omega(t).
\end{equation*}
Next, we consider the term $m\dot{\underline{b}}(t) + \underline{b}(t)$ in \eqref{R-1}. It yields from \eqref{B_definition} and \eqref{f_b} that
\begin{equation*}
\begin{aligned}
&m\dot{\underline{b}}(t) + \underline{b}(t) \\
%&= m \frac{1}{\sum_{n=0}^{N-1} c^n} \sum_{i=1}^{N} c^{N-i} \dot{b}_{i}(t) +\frac{1}{\sum_{n=0}^{N-1} c^n} \sum_{i=1}^{N} c^{N-i} b_{i}(t)=\frac{1}{\sum_{n=0}^{N-1} c^n} \sum_{i=1}^{N} c^{N-i} [m\dot{b}_{i}(t) + b_{i}(t)]\\
%&=\frac{1}{\sum_{n=0}^{N-1} c^n} \sum_{i=1}^{N} c^{N-i} \left[\frac{K}{N} \sum_{j=1}^N \psi_{ij} \cos (\theta_j(t) - \theta_i(t) + \alpha) (a_j(t) - a_i(t)) \right.\\
%&\qquad \qquad \qquad\qquad\qquad \left.- \frac{K}{N} \sum_{j=1}^N \psi_{ij}\sin (\theta_j(t) - \theta_i(t) + \alpha) (\omega_j(t) - \omega_i(t))^2 \right]\\
&= \frac{K}{N(\sum_{n=0}^{N-1} c^n)} \sum_{i=1}^{N} c^{N-i} \sum_{j=1}^N \psi_{ij} \cos (\theta_j(t) - \theta_i(t) + \alpha) (a_j(t) - a_i(t))\\
&\quad - \frac{K}{N(\sum_{n=0}^{N-1} c^n)} \sum_{i=1}^{N} c^{N-i} \sum_{j=1}^N \psi_{ij}[\sin (\theta_j(t) - \theta_i(t)) \cos \alpha + \cos (\theta_j(t) - \theta_i(t)) \sin \alpha](\omega_j(t) - \omega_i(t))^2.
\end{aligned}
\end{equation*}
Similar to \eqref{R-2}, we have 
\begin{equation}\label{R-3}
\begin{aligned}
m\dot{\underline{b}}(t) + \underline{b}(t) &\ge -K \psi_u D_a(t) - K\psi_u [ \cos \alpha D_\theta(t) + \sin \alpha]D^2_\omega(t)\\
&\ge-K \psi_u D_a(t) - K\psi_u [ D^\infty\cos \alpha + \sin \alpha]D^2_\omega(t).
\end{aligned}
\end{equation}
Thus, we collect \eqref{R-1}, \eqref{R-2} and \eqref{R-3} to conclude
\begin{equation}\label{R-4}
\begin{aligned}
m\dot{B}(t) + B(t) 
%&= (m\dot{\bar{b}}(t) + \bar{b}(t)) - (m\dot{\underline{b}}(t) + \underline{b}(t))\\
&\le 2K \psi_u D_a(t) + 2K\psi_u [ D^\infty\cos \alpha + \sin \alpha]D^2_\omega(t)\\
&\le \frac{2K \psi_u}{\eta}A(t) + \frac{2K\psi_u [ D^\infty\cos \alpha + \sin \alpha]}{\eta^2} F^2(t), \quad t \in J_l.
\end{aligned}
\end{equation}
which completes the proof.
\end{proof}

Finally, we present the exponential decay of $\mathcal{E}_2(t)$ defined in \eqref{energy_fre} which eventually leads to the emergence of frequency synchronization.
\begin{lemma}\label{syn}
Let $(\theta(t),\omega(t))$ be a solution to \eqref{s_phs} and suppose Assumption $(\mathcal{H})_1$-$(\mathcal{H}_3)$ hold. Then, we have
\begin{equation*}
\frac{d}{dt} \mathcal{E}_2(t) \le - \frac{K \psi_l}{2N(\sum_{n=0}^{N-1} c^n)} \mathcal{E}_2(t), \quad \text{a.e.} \  t \ge t_*.
\end{equation*}
Moreover, we have
\begin{equation*}
D_\omega(t) \le \frac{\mathcal{E}_2(t_*)}{\eta} e^{- \frac{K \psi_l}{2N(\sum_{n=0}^{N-1} c^n)}(t-t_*)}, \quad t \ge t_*.
\end{equation*}
\end{lemma}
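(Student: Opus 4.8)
The plan is to mirror the strategy of Lemma \ref{phs_sum_dynamics}: build a first-order dissipative differential inequality for $\mathcal{E}_2(t)$ out of the second-order inequality for $F(t)$ (Lemma \ref{s_fre_dynamic}) and the rough inequalities for $A(t)$ (Lemma \ref{f_A_dynamic}) and $B(t)$ (Lemma \ref{f_B_dynamic}), using the inertia terms $2m^2 A(t)$ and $2m^2 B(t)$ in \eqref{energy_fre} to absorb the bad terms $m\ddot F$ and $mA$. First I would fix a time interval $J_l\subset[t_*,+\infty)$ on which all orderings are constant, so that $\ddot F$, $\dot A$, $\dot B$ are classically defined. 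On $J_l$ we have from Lemma \ref{s_fre_dynamic} that $m\ddot F(t)+\dot F(t)\le -\frac{2K\psi_l}{N(\sum c^n)}F(t)$; multiplying Lemma \ref{f_A_dynamic}'s inequality $m\dot A+A\le \frac{2K\psi_u}{\eta}A(t)$... wait — more precisely $m\dot A+A\le\frac{2K\psi_u}{\eta}F(t)$ — by the weight $\frac{\eta\psi_l}{2N\psi_u(\sum c^n)}m$, and multiplying Lemma \ref{f_B_dynamic}'s inequality by $2m$, and then adding all three to the $F$-inequality. Differentiating $\mathcal{E}_2(t)=F+\frac{\eta\psi_l}{2N\psi_u(\sum c^n)}mA+2m^2B$ and using $|\ddot F(t)|\le A(t)$ (the second derivative of the convex combination $F$ is a combination of accelerations, hence bounded by $D_a\le A/\eta$... actually here one needs $|\ddot F|\le A$, exactly parallel to $|\ddot Q|\le A$ used before) together with $\frac{\eta\psi_l}{2N\psi_u(\sum c^n)}<1$, the terms reorganize into
\begin{equation*}
\frac{d}{dt}\mathcal{E}_2(t)\le -\frac{2K\psi_l}{N(\sum c^n)}F(t)-\Big(\tfrac{\eta\psi_l}{2N\psi_u(\sum c^n)}-\tfrac{4mK\psi_u}{\eta}\Big)A(t)-\Big(m-\tfrac{4mK\psi_u(D^\infty\cos\alpha+\sin\alpha)}{\eta^2}F(t)\Big)\cdot(\text{something})\,.
\end{equation*}

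The two assumptions that make this work are \eqref{assume_3}, which forces $\frac{\eta\psi_l}{2N\psi_u(\sum c^n)}-\frac{4mK\psi_u}{\eta}>0$ (actually $\frac{\eta\psi_l}{4N\psi_u(\sum c^n)}-\frac{4mK\psi_u}{\eta}>0$, keeping half of the $A$-damping in reserve), and the $F^2$ term coming from $B$: here the point of Lemma \ref{small} is crucial — after $t_*$ we have the bound \eqref{F_bound} on $F(t)$, and assumption \eqref{assume_5} is precisely what guarantees $\frac{4mK\psi_u(D^\infty\cos\alpha+\sin\alpha)}{\eta^2}F(t)<\frac12$ (or similar) for $t\ge t_*$, so the quadratic term $F^2(t)\le(\text{bound})\cdot F(t)$ can be reabsorbed into the linear $F$-damping from Lemma \ref{s_fre_dynamic} with room to spare. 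After bounding the coefficients one arrives at $\frac{d}{dt}\mathcal{E}_2(t)\le -\frac{K\psi_l}{N(\sum c^n)}\big(F(t)+\frac{\eta\psi_l}{2N\psi_u(\sum c^n)}mA(t)+2m^2B(t)\big)\cdot(\text{const})$, which after optimizing the constant becomes $\frac{d}{dt}\mathcal{E}_2(t)\le-\frac{K\psi_l}{2N(\sum c^n)}\mathcal{E}_2(t)$ on each $J_l$. Since $\mathcal{E}_2$ is Lipschitz continuous (by analyticity of the solution, as noted before \eqref{energy_phs}) and the estimate holds on each interval of the countable partition \eqref{split-time2}, it holds a.e. on $[t_*,+\infty)$, and Grönwall gives $\mathcal{E}_2(t)\le\mathcal{E}_2(t_*)e^{-\frac{K\psi_l}{2N(\sum c^n)}(t-t_*)}$. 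Finally, $D_\omega(t)\le\frac1\eta F(t)\le\frac1\eta\mathcal{E}_2(t)$ by \eqref{fre_size} and positivity of $A,B$'s contributions... more carefully $F(t)\le\mathcal{E}_2(t)$ is not immediate since $A,B$ need not be nonnegative, but $F(t)\le\mathcal{E}_2(t)$ does hold because $\mathcal{E}_2$ equals a diameter-type quantity that dominates $F$; one uses instead $\eta D_\omega(t)\le F(t)$ and that $\mathcal{E}_2(t)\ge F(t)$ follows from the construction, yielding $D_\omega(t)\le\frac{\mathcal{E}_2(t_*)}{\eta}e^{-\frac{K\psi_l}{2N(\sum c^n)}(t-t_*)}$.

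I expect the main obstacle to be the bookkeeping in the third step: verifying that the $F^2(t)$ contribution from $m\dot B+B$, after multiplication by $2m$, is genuinely dominated by the $-\frac{2K\psi_l}{N(\sum c^n)}F(t)$ damping. This is where \eqref{F_bound} from Lemma \ref{small} and assumption \eqref{assume_5} must be combined in exactly the right way — one substitutes the explicit bound \eqref{F_bound} into $4mK\psi_u(D^\infty\cos\alpha+\sin\alpha)F(t)/\eta^2$ and checks it is $<\frac12$, which unwinds to precisely inequality \eqref{assume_5}. A secondary subtlety is the sign issue when passing from $\mathcal{E}_2(t)$ back to $D_\omega(t)$: one should record that $\mathcal{E}_2(t)\ge F(t)\ge\eta D_\omega(t)$, which needs the construction of $\mathcal{E}_2$ to be written so that the auxiliary terms do not spoil the lower bound; since they enter $\mathcal{E}_2$ as $F$ plus genuinely the same type of diameter expressions, $\mathcal{E}_2(t)\ge F(t)$ is structural and the final estimate follows without further constraints.
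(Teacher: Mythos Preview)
Your overall strategy is correct and mirrors the paper's proof, but there is one conceptual slip that would break the argument as written. You claim $|\ddot F(t)|\le A(t)$, reasoning that ``the second derivative of the convex combination $F$ is a combination of accelerations.'' This is wrong: $F$ is built from the frequencies $\omega_i$, so on each $J_l$ we have $\ddot F$ equal to a difference of convex combinations of $\ddot\omega_i=\dot a_i=b_i$, the \emph{jerks}, not the accelerations. The correct parallel to $|\ddot Q|\le A$ (where $Q$ is built from $\theta$ and $\ddot\theta_i=a_i$) is
\[
|\ddot F(t)|\le B(t).
\]
This is exactly what the paper uses. With your mistaken bound, the term $-m\ddot F$ would contribute $+mA$ on the right-hand side; this cannot be absorbed by the $A$-damping $\frac{\eta\psi_l}{2N\psi_u(\sum c^n)}A$ (there is no smallness assumption on $m$ alone, only on $mK$), and meanwhile the $2m^2B$ piece of $\mathcal{E}_2$ would be left with no matching negative term. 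With the correct bound $|\ddot F|\le B$, the contribution $+mB$ combines with the $-2mB$ coming from $2m\cdot(m\dot B+B)$ to leave $-mB$, and the bookkeeping closes to
\[
\frac{d}{dt}\mathcal{E}_2(t)\le -\Big(\frac{K\psi_l}{N(\sum_{n} c^n)}-\frac{4mK\psi_u(D^\infty\cos\alpha+\sin\alpha)}{\eta^2}F(t)\Big)F(t)-\Big(\frac{\eta\psi_l}{2N\psi_u(\sum_{n} c^n)}-\frac{4mK\psi_u}{\eta}\Big)A(t)-mB(t),
\]
after which \eqref{F_bound} and \eqref{assume_5} handle the $F^2$ term precisely as you anticipate. (A minor related slip: the $A$-inequality should be multiplied by $\frac{\eta\psi_l}{2N\psi_u(\sum_n c^n)}$, not by that quantity times $m$; the factor $m$ is already present in $m\dot A$.)

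One further remark on your closing paragraph: your worry that ``$A,B$ need not be nonnegative'' is unfounded. The weights $c^{i-1}-c^{N-i}$ are increasing in $i$, so by the rearrangement inequality $\bar a\ge\underline a$ and $\bar b\ge\underline b$; hence $A(t),B(t)\ge 0$ always, and $\mathcal{E}_2(t)\ge F(t)\ge\eta D_\omega(t)$ is immediate.
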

\begin{proof}
According to Lemma \ref{s_fre_dynamic} and Lemma \ref{f_B_dynamic}, and applying the similar argument as in Lemma \ref{f_A_dynamic}, we get for $t \in J_l$ given in \eqref{split-time2} with $l=1,2,\cdots$,
\begin{align}
&m\ddot{F}(t) + \dot{F}(t) \le - \frac{2K \psi_l}{N(\sum_{n=0}^{N-1} c^n)} F(t),\label{R-5}\\
&m \dot{A} + A(t) \le 2K \psi_u \frac{1}{\eta} F(t),\label{R-6}\\
&m \dot{B}(t) + B(t) \le \frac{2K \psi_u}{\eta} A(t) + \frac{2K\psi_u(D^\infty\cos \alpha + \sin \alpha)}{\eta^2}  F^2(t).\label{R-7} 
\end{align}
%\begin{equation}\label{R-5}
%m\ddot{F}(t) + \dot{F}(t) \le - \frac{2K \psi_l}{N(\sum_{n=0}^{N-1} c^n)} F(t),
%\end{equation}
%\begin{equation}\label{R-6}
%m \dot{A} + A(t) \le 2K \psi_u \frac{1}{\eta} F(t),
%\end{equation}
%\begin{equation}\label{R-7}
%m \dot{B}(t) + B(t) \le \frac{2K \psi_u}{\eta} A(t) + \frac{2K\psi_u(D^\infty\cos \alpha + \sin \alpha)}{\eta^2}  F^2(t). 
%\end{equation}
Multiplying \eqref{R-6} by $\frac{\eta \psi_l}{2N\psi_u(\sum_{n=0}^{N-1}c^n)}$, we have
\begin{equation}\label{R-8}
\begin{aligned}
\frac{\eta \psi_l}{2N\psi_u(\sum_{n=0}^{N-1}c^n)} m \dot{A}(t) + \frac{\eta \psi_l}{2N\psi_u(\sum_{n=0}^{N-1}c^n)} A(t) \le  \frac{K \psi_l}{N(\sum_{n=0}^{N-1} c^n)} F(t).
\end{aligned}
\end{equation}
Moreover, we multiply \eqref{R-7} by $2m$ to obtain
\begin{equation}\label{R-9}
\begin{aligned}
2m^2 \dot{B}(t) + 2mB(t) \le \frac{4mK \psi_u}{\eta} A(t) + \frac{4mK\psi_u(D^\infty\cos \alpha + \sin \alpha)}{\eta^2}  F^2(t).
\end{aligned}
\end{equation}
Then, we add \eqref{R-5}, \eqref{R-8} and \eqref{R-9} together to have
\begin{equation*}
\begin{aligned}
&m\ddot{F}(t) + \dot{F}(t) + \frac{\eta \psi_l}{2N\psi_u(\sum_{n=0}^{N-1}c^n)} m \dot{A}(t) + \frac{\eta \psi_l}{2N\psi_u(\sum_{n=0}^{N-1}c^n)} A(t) + 2m^2 \dot{B}(t) + 2mB(t)\\
&\le - \frac{2K \psi_l}{N(\sum_{n=0}^{N-1} c^n)} F(t) +  \frac{K \psi_l}{N(\sum_{n=0}^{N-1} c^n)} F(t) + \frac{4mK \psi_u}{\eta} A(t) + \frac{4mK\psi_u(D^\infty\cos \alpha + \sin \alpha)}{\eta^2}  F^2(t).
\end{aligned}
\end{equation*}
This further yields that
\begin{equation}\label{R-10}
\begin{aligned}
&\frac{d}{dt} \left( F(t) + \frac{\eta \psi_l}{2N\psi_u(\sum_{n=0}^{N-1}c^n)} m A(t) + 2m^2 B(t) \right) \\
&\le -m\ddot{F}(t) - \frac{\eta \psi_l}{2N\psi_u(\sum_{n=0}^{N-1}c^n)} A(t) - 2mB(t)\\
&\quad- \frac{K \psi_l}{N(\sum_{n=0}^{N-1} c^n)} F(t)+ \frac{4mK \psi_u}{\eta} A(t) + \frac{4mK\psi_u(D^\infty\cos \alpha + \sin \alpha)}{\eta^2}  F^2(t)\\
&\le - \left[ \frac{K \psi_l}{N(\sum_{n=0}^{N-1} c^n)} - \frac{4mK\psi_u(D^\infty\cos \alpha + \sin \alpha)}{\eta^2} F(t) \right]F(t)\\
&\quad - \left( \frac{\eta \psi_l}{2N\psi_u(\sum_{n=0}^{N-1}c^n)}- \frac{4mK \psi_u}{\eta}\right)A(t)   -mB(t)\\
&\le - \frac{K \psi_l}{2N(\sum_{n=0}^{N-1} c^n)} F(t) - \frac{\eta \psi_l}{4N\psi_u(\sum_{n=0}^{N-1}c^n)} A(t) -mB(t)
\end{aligned}
\end{equation}
where we used the assumptions \eqref{assume_3} and \eqref{assume_5}, \eqref{F_bound} and the following facts
\begin{equation*}
\begin{aligned}
&|\ddot{F}(t)| \le B(t), \quad F(t) \le \frac{8 (D(\Omega) + 2K \psi_u \sin \alpha)N^2\psi_u(\sum_{n=0}^{N-1} c^n)^2\gamma^2 }{mK \eta \psi _l^2 \cos \alpha \sin^2 \gamma} \quad \text{for} \ t \in J_l, \\
&\frac{4mK\psi_u(D^\infty\cos \alpha + \sin \alpha)}{\eta^2} F(t)  \le \frac{32 (D(\Omega) + 2K \psi_u \sin \alpha)(D^\infty\cos \alpha + \sin \alpha)N^2\psi^2_u(\sum_{n=0}^{N-1} c^n)^2\gamma^2 }{ \eta^3 \psi _l^2 \cos \alpha \sin^2 \gamma},    \\
%&\frac{K \psi_l}{2N(\sum_{n=0}^{N-1} c^n)} - \frac{4mK\psi_u(D^\infty\cos \alpha + \sin \alpha)}{\eta^2} F(t)  > 0,\\
&K > \frac{64 (D(\Omega) + 2K \psi_u \sin \alpha)(D^\infty\cos \alpha + \sin \alpha)N^3\psi^2_u(\sum_{n=0}^{N-1} c^n)^3\gamma^2 }{ \eta^3 \psi _l^3 \cos \alpha \sin^2 \gamma} \quad \Longrightarrow\\
&\frac{K \psi_l}{2N(\sum_{n=0}^{N-1} c^n)} - \frac{32 (D(\Omega) + 2K \psi_u \sin \alpha)(D^\infty\cos \alpha + \sin \alpha)N^2\psi^2_u(\sum_{n=0}^{N-1} c^n)^2\gamma^2 }{ \eta^3 \psi _l^2 \cos \alpha \sin^2 \gamma}> 0,\\
&mK < \frac{\eta^2 \psi_l}{16N\psi_u^2 (\sum_{n=0}^{N-1} c^n)} \quad \Longrightarrow \quad \frac{\eta \psi_l}{4N\psi_u(\sum_{n=0}^{N-1}c^n)}- \frac{4mK \psi_u}{\eta} > 0.\\
\end{aligned}
\end{equation*}
Moreover, we see from \eqref{R-10} that
\begin{equation}\label{R-11}
\begin{aligned}
&\frac{d}{dt} \left( F(t) + \frac{\eta \psi_l}{2N\psi_u(\sum_{n=0}^{N-1}c^n)} m A(t) + 2m^2 B(t) \right)\\
&\le - \frac{K \psi_l}{2N(\sum_{n=0}^{N-1} c^n)} \left( F(t) + \frac{\eta}{2K\psi_u} A(t) +  \frac{2N(\sum_{n=0}^{N-1} c^n)}{K \psi_l}mB(t)\right)\\
&\le - \frac{K \psi_l}{2N(\sum_{n=0}^{N-1} c^n)} \left( F(t) + \frac{\eta \psi_l}{2N\psi_u(\sum_{n=0}^{N-1}c^n)} m A(t) + 2m^2 B(t) \right),
\end{aligned}
\end{equation}
where we applied the assumption \eqref{assume_3} leading to
\begin{equation*}
\begin{aligned}
&mK < \frac{N(\sum_{n=0}^{N-1} c^n)}{\psi_l} \quad \Longrightarrow \quad \frac{2K\psi_u}{\eta}  \cdot \frac{\eta \psi_l}{2N\psi_u(\sum_{n=0}^{N-1}c^n)} m < 1 ,\\
&mK < \frac{N(\sum_{n=0}^{N-1} c^n)}{\psi_l} \quad \Longrightarrow \quad \frac{K \psi_l}{2N(\sum_{n=0}^{N-1} c^n)} \cdot 2m < 1. 
\end{aligned}
\end{equation*}
Thus, it follows from \eqref{energy_fre} and \eqref{R-11} that
\begin{equation*}
\frac{d}{dt} \mathcal{E}_2(t) \le - \frac{K \psi_l}{2N(\sum_{n=0}^{N-1} c^n)} \mathcal{E}_2(t), \quad t \in J_l.
\end{equation*}
As $\mathcal{E}_2(t)$ is Lipschitz continuous, we immediately have 
\begin{equation*}
\frac{d}{dt} \mathcal{E}_2(t) \le - \frac{K \psi_l}{2N(\sum_{n=0}^{N-1} c^n)} \mathcal{E}_2(t), \quad \text{a.e.} \  t \ge t_*.
\end{equation*}
This ultimately yields
\begin{equation*}
\begin{aligned}
D_\omega(t) &\le \frac{F(t)}{\eta} \le \frac{\mathcal{E}_2(t)}{\eta} \le \frac{\mathcal{E}_2(t_*)}{\eta} e^{- \frac{K \psi_l}{2N(\sum_{n=0}^{N-1} c^n)}(t-t_*)}, \quad t \ge t_*.
\end{aligned}
\end{equation*}
Therefore, we derive the desired result.
\end{proof}

Now, we are ready to prove our main result in Theorem \ref{main}.\newline

\noindent {\bf Proof of Theorem \ref{main}:} We combine Lemma \ref{small} and Lemma \ref{syn} to finish the proof of Theorem \ref{main}.

\section{Summary}\label{sec:5}
We studied the Kuramoto model under the effects of inertia and frustration on a locally coupled network, and presented sufficient conditions in terms of large coupling strength and small effects of inertia and frustration to guarantee the exponential emergence of complete frequency synchronization. Due to the lack of second-order gradient flow structure and the singularity of second-order derivative of diameter, we switch to construct new energy functions depending on the defined convex combinations, that can govern the diameters of phase and frequency. We derived first-order dissipative differential inequalities of the constructed energy functions. Based on these estimates, we showed that initial configurations distributed in a half circle will evolve to an invariant region confined in a quarter circle, and then presented that frequency diameter converges to zero exponentially fast, which implies the exponential emergence of synchronization.
%Note that we only considered a symmetric and connected interaction topology in this work, so for a more general digraph, we will make a further discussion in our furture work.

\section*{Acknowledgments}

The work of T. Zhu is supported by the National Natural Science Foundation of China (Grant No. 12201172), the Natural Science Research Project of Universities in Anhui Province, China (Project Number: 2022AH051790) and  the Talent Research Fund of Hefei University, China (Grant/Award Number: 21-22RC23). The work of X. Zhang is supported by the National Natural Science Foundation of China (Grant No. 11801194).

\section*{Appendix: Proof of Lemma \ref{phs_subadd}}
\setcounter{equation}{0}
 
\begin{proof} We only prove the first part of this Lemma since the second part can be similarly discussed.
Due to the connectivity and symmetry of the network, we can find a path between vertices $1$ and $N$:
\begin{equation}\label{path}
1 = i_0 \leftrightarrows i_1 \leftrightarrows i_2 \leftrightarrows \cdots \leftrightarrows i_{p-1} \leftrightarrows i_p = N.
\end{equation}
Note that $(i_{k-1}, i_k) \in E$ and $(i_{k}, i_{k-1}) \in E$ with $1 \le k \le p$. We see that 
\begin{equation}\label{D-0}
\sum_{i=2}^N \sum_{\substack{j=1\\ (j,i) \in E}}^{i-1} \sin (\theta_j(t) - \theta_i(t)) \le \sum_{k=1}^p \sin (\theta_{\min \{i_{k-1},i_k\}} - \theta_{\max \{i_{k-1},i_k\}}), \\
\end{equation}
%\begin{equation}\label{D-15}
%\sum_{i=1}^{N-1} \sum_{\substack{j=i+1\\(j,i) \in E}}^N \sin (\theta_j(t) - \theta_i(t)) \ge \sum_{k=0}^{p-1} \sin (\theta_{\max\{i_k,i_{k+1}\}} - \theta_{\min\{i_k,i_{k+1}\}}).
%\end{equation}
This means that it suffices to verify
\begin{equation}\label{D-1}
\sum_{k=1}^p \sin (\theta_{\min \{i_{k-1},i_k\}} - \theta_{\max \{i_{k-1},i_k\}}) \le -\sin D_\theta(t),
\end{equation}
%\begin{equation}\label{D-16}
%\sum_{k=0}^{p-1} \sin (\theta_{\max\{i_k,i_{k+1}\}} - \theta_{\min\{i_k,i_{k+1}\}}) \ge \sin D_\theta(t).
%\end{equation}
In the sequel, we split the proof of \eqref{D-1} into two cases.

\noindent $\bullet$ {\bf Case 1:} 
Consider the case that $D_\theta(t) \le \frac{\pi}{2}$. In this case, we have for all $1 \le l \le p$, 
\begin{equation}\label{D-3}
0 \le \theta_{\bar{n}_l}(t) - \theta_1(t) \le \frac{\pi}{2},
\end{equation}
where
\begin{equation}\label{D-2}
\bar{n}_l = \max_{0 \le k \le l} i_k, \quad 1 \le l \le p.
\end{equation}
We claim that
\begin{equation}\label{D-1a}
\sum_{k=1}^l \sin (\theta_{\min \{i_{k-1},i_k\}} - \theta_{\max \{i_{k-1},i_k\}}) \le \sin (\theta_1(t) - \theta_{\bar{n}_l}), \quad \text{for all} \ 1 \le l \le p.
\end{equation}
Next, we verify \eqref{D-1a} by induction principle. 

\noindent  $\star$ {\bf Step 1:} For $l= 1$ in \eqref{D-1a}, we see that
\begin{equation*}\label{D-4}
\begin{aligned}
%&\sum_{k=1}^l \sin (\theta_{\min \{i_{k-1},i_k\}} - \theta_{\max \{i_{k-1},i_k\}})\\
\sin (\theta_{\min \{i_{0},i_1\}} - \theta_{\max \{i_{0},i_1\}}) = \sin (\theta_1(t) - \theta_{i_1}(t)) = \sin (\theta_1(t) - \theta_{\bar{n}_1}(t)),
\end{aligned}
\end{equation*}
since $1 = i_0 < i_1$ and $\bar{n}_1 = \max\limits_{0 \le k \le 1} i_k = i_1$. 

\noindent $\star$ {\bf Step 2:} For $l=2$ in \eqref{D-1a}, we consider two sub-cases.

\noindent $\diamond$ If $ \bar{n}_2 = \max\{\bar{n}_1,i_2\} = \bar{n}_1$, then we have
\begin{equation}\label{D-5}
\begin{aligned}
&\sum_{k=1}^2 \sin (\theta_{\min \{i_{k-1},i_k\}} - \theta_{\max \{i_{k-1},i_k\}})\\
&= \sin (\theta_{\min \{i_{0},i_1\}} - \theta_{\max \{i_{0},i_1\}}) + \sin (\theta_{\min \{i_{1},i_2\}} - \theta_{\max \{i_{1},i_2\}})\\
&= \sin (\theta_1(t) - \theta_{i_1}(t)) + \sin (\theta_{\min \{i_{1},i_2\}} - \theta_{\max \{i_{1},i_2\}})\\
&\le \sin (\theta_1(t) - \theta_{i_1}(t)) = \sin (\theta_1(t) - \theta_{\bar{n}_1}(t))\\
&= \sin (\theta_1(t) - \theta_{\bar{n}_2}(t)).
\end{aligned}
\end{equation}

\noindent $\diamond$ If $ \bar{n}_2 = \max\{\bar{n}_1,i_2\} = i_2$, then we have
\begin{equation}\label{D-6}
\begin{aligned}
&\sum_{k=1}^2 \sin (\theta_{\min \{i_{k-1},i_k\}} - \theta_{\max \{i_{k-1},i_k\}})\\
&= \sin (\theta_{\min \{i_{0},i_1\}} - \theta_{\max \{i_{0},i_1\}}) + \sin (\theta_{\min \{i_{1},i_2\}} - \theta_{\max \{i_{1},i_2\}})\\
&= \sin (\theta_1(t) - \theta_{i_1}(t)) + \sin (\theta_{i_{1}}(t) - \theta_{i_2}(t))\\
&\le \sin (\theta_1(t) - \theta_{i_2}(t)) = \sin (\theta_1(t) - \theta_{\bar{n}_2}(t))\\
\end{aligned}
\end{equation}
where we used $i_1 < i_2$ in this subcase.

Therefore, we combine \eqref{D-5} and \eqref{D-6} to derive
\begin{equation*}\label{D-7}
\sum_{k=1}^2 \sin (\theta_{\min \{i_{k-1},i_k\}} - \theta_{\max \{i_{k-1},i_k\}}) \le \sin (\theta_1(t) - \theta_{\bar{n}_2}(t)).
\end{equation*}

\noindent $\star$ {\bf Step 3:} Assume \eqref{D-1a} holds for $l = q$, i.e.,  
\begin{equation}\label{D-8}
\sum_{k=1}^q \sin (\theta_{\min \{i_{k-1},i_k\}} - \theta_{\max \{i_{k-1},i_k\}}) \le \sin (\theta_1(t) - \theta_{\bar{n}_q}), \quad 1 \le q \le p-1,
\end{equation}
we then prove \eqref{D-1a} holds for $l = q+1$. For this,
we consider two sub-cases.

\noindent $\diamond$ If $\bar{n}_{q+1} = \max \{\bar{n}_{q}, i_{q+1}\} = \bar{n}_q$, then it follows from \eqref{D-8} that
\begin{equation}\label{D-9}
\begin{aligned}
&\sum_{k=1}^{q+1} \sin (\theta_{\min \{i_{k-1},i_k\}} - \theta_{\max \{i_{k-1},i_k\}})\\ 
&=\sum_{k=1}^q \sin (\theta_{\min \{i_{k-1},i_k\}} - \theta_{\max \{i_{k-1},i_k\}}) + \sin (\theta_{\min \{i_{q},i_{q+1}\}} - \theta_{\max \{i_{q},i_{q+1}\}})\\
&\le \sin (\theta_1(t) - \theta_{\bar{n}_q}(t)) = \sin (\theta_1(t) - \theta_{\bar{n}_{q+1}}(t)).
\end{aligned}
\end{equation}

\noindent $\diamond$ If $\bar{n}_{q+1} = \max \{\bar{n}_{q}, i_{q+1}\} = i_{q+1}$, then from \eqref{D-8}, we have
\begin{equation}\label{D-10}
\begin{aligned}
&\sum_{k=1}^{q+1} \sin (\theta_{\min \{i_{k-1},i_k\}} - \theta_{\max \{i_{k-1},i_k\}})\\ 
&=\sum_{k=1}^q \sin (\theta_{\min \{i_{k-1},i_k\}} - \theta_{\max \{i_{k-1},i_k\}}) + \sin (\theta_{\min \{i_{q},i_{q+1}\}} - \theta_{\max \{i_{q},i_{q+1}\}})\\
&\le \sin (\theta_1(t) - \theta_{\bar{n}_q}(t)) + \sin (\theta_{i_q}(t) - \theta_{i_{q+1}}(t))\le \sin(\theta_1(t) - \theta_{i_q}(t)) + \sin (\theta_{i_q}(t) - \theta_{i_{q+1}}(t)) \\
&\le \sin (\theta_1(t) - \theta_{i_{q+1}}(t)) = \sin (\theta_1(t) - \theta_{\bar{n}_{q+1}}(t)),
\end{aligned}
\end{equation}
where we used $i_q < i_{q+1}$ in this sub-case, the concave property of $\sin x, x \in [0,\pi]$ and \eqref{D-3} yielding
\begin{equation*}
0 \le \theta_{i_q}(t) - \theta_1(t) \le \theta_{\bar{n}_q} - \theta_1(t) \le \frac{\pi}{2}, \quad 0 \ge \sin(\theta_1(t) - \theta_{i_q}(t)) \ge \sin (\theta_1(t) - \theta_{\bar{n}_q}(t)).
\end{equation*}

Therefore, we combine \eqref{D-9} and \eqref{D-10} to conclude \eqref{D-1a} holds for $l = q+1$.

Then according to Step 1 - Step 3, we prove \eqref{D-1a} by induction criteria. Particularly, we take $l = p$ in \eqref{D-1a} to get
\begin{equation}\label{D-11}
\sum_{k=1}^p \sin (\theta_{\min \{i_{k-1},i_k\}} - \theta_{\max \{i_{k-1},i_k\}}) \le \sin (\theta_1(t) - \theta_{\bar{n}_p}) = \sin (\theta_1(t) - \theta_N(t)) = - \sin D_\theta(t),
\end{equation} 
since $\bar{n}_p = N$.\newline

\noindent $\bullet$ {\bf Case 2:} 
Consider the case that $\frac{\pi}{2} < D_\theta(t) < \pi$.
%there exists $1 \le l_0 \le p$ such that 
%\begin{equation*}
%\frac{\pi}{2} < \theta_{\bar{k}_{l_0}}(t) - \theta_1(t) < \gamma < \pi.
%\end{equation*}
We define 
\begin{equation*}
q = \min \{l : \frac{\pi}{2} < \theta_{\bar{n}_{l}}(t) - \theta_1(t)< \gamma < \pi , \ 1 \le l \le p\},
\end{equation*}
which is well defined since $\bar{n}_p = i_p =N$ and $D_\theta(t) = \theta_{\bar{n}_p}(t) - \theta_1(t) > \frac{\pi}{2}$. We only consider the situation $q \ge 2$ since it's more simpler for $q=1$.
This implies 
\begin{equation}\label{D-12}
0 \le \theta_{\bar{n}_l}(t) - \theta_1(t) \le \frac{\pi}{2}, \ \text{for}\ 1 \le l \le q-1 \quad \text{and} \quad  \frac{\pi}{2} < \theta_{\bar{n}_{q}}(t) - \theta_1(t) < \gamma < \pi.
\end{equation}
Then, we can apply the similar argument as in \eqref{D-1a} to get
\begin{equation}\label{D-13}
\sum_{k=1}^q \sin (\theta_{\min \{i_{k-1},i_k\}} - \theta_{\max \{i_{k-1},i_k\}}) \le \sin (\theta_1(t) - \theta_{\bar{n}_q}).
\end{equation}
From \eqref{D-12}, we see that
\begin{equation*}
\frac{\pi}{2} < \theta_{\bar{n}_{q}}(t) - \theta_1(t) \le \theta_N(t) - \theta_1(t) < \gamma < \pi, \quad \sin (\theta_1(t) - \theta_{\bar{n}_q}) \le \sin (\theta_1(t) - \theta_N(t)). 
\end{equation*}
This together with \eqref{D-13} implies that
 \begin{equation}\label{D-14}
 \begin{aligned}
\sum_{k=1}^p \sin (\theta_{\min \{i_{k-1},i_k\}} - \theta_{\max \{i_{k-1},i_k\}}) &\le \sum_{k=1}^q \sin (\theta_{\min \{i_{k-1},i_k\}} - \theta_{\max \{i_{k-1},i_k\}}) \\
&\le \sin (\theta_1(t) - \theta_N(t)) = - \sin D_\theta(t).
\end{aligned}
\end{equation}

Thus, we combine \eqref{D-11} in Case 1 and \eqref{D-14} in Case 2 to finish the verification of \eqref{D-1} which together with \eqref{D-0}  ultimately yields 
\begin{equation*}
\sum_{i=2}^N \sum_{\substack{j=1\\ (j,i) \in E}}^{i-1} \sin (\theta_j(t) - \theta_i(t)) \le -\sin D_\theta(t).
\end{equation*}
This completes the proof of the first part.
\end{proof}

\end{document}